\newcommand\reallywidehat[1]{%
\savestack{\tmpbox}{\stretchto{%
  \scaleto{%
    \scalerel*[\widthof{\ensuremath{#1}}]{\kern-.6pt\bigwedge\kern-.6pt}%
    {\rule[-\textheight/2]{1ex}{\textheight}}%WIDTH-LIMITED BIG WEDGE
  }{\textheight}% 
}{0.5ex}}%
\stackon[1pt]{#1}{\tmpbox}%
}
\numberwithin{equation}{section}
  \theoremstyle{plain}
 \newtheorem{theorem}[equation]{Theorem}
\newtheorem{proposition}[equation]{Proposition}
 \newtheorem{lemma}[equation]{Lemma}
  \newtheorem{claim}[equation]{Claim}
 \newtheorem{corollary}[equation]{Corollary}
 \theoremstyle{remark}
 \newtheorem{remark}[equation]{Remark}
\theoremstyle{definition}
 \newtheorem{definition}[equation]{Definition}
\newtheorem{example}[equation]{Example}
\newcommand{\abs}[1]{\lvert#1\rvert}
\newcommand{\norm}[1]{\lvert\lvert#1\rvert\rvert}
\newcommand{\ip}[1]{\left\langle#1\right\rangle}
\newcommand{\Rc}{{\rm Ric}}
\newcommand{\dd}{{\, d}}
\newcommand{\dE}{\mathds{E}}
\newcommand{\dR}{\mathds{R}}
\newcommand{\eps}{\varepsilon}
\newcommand{\cA}{\mathcal{A}}
\newcommand{\cH}{\mathcal{H}}
\renewcommand{\P}{\mathbb{P}}
\newcommand{\cR}{\mathcal{R}}
\newcommand{\tr}{\text{tr}}
\newcommand{\Lap}{\Delta}
\newcommand{\RR}{\mathbb{R}}
\newcommand{\PP}{\mathbb P}
\newcommand{\EE}{\mathbb E}
\newcommand{\Eo}{\mathbb E_{0}}
\newcommand{\Ex}{\mathbb E_{x}}
\DeclareMathOperator{\Hess}{Hess}
\DeclareMathOperator{\Ric}{Ric}
\DeclareMathOperator{\Rm}{Rm}
\begin{document}

\title{Differential Harnack Inequalities on Path Space}
\author{Robert Haslhofer, Eva Kopfer, Aaron Naber \thanks{The first author has been supported by NSERC grant RGPIN-2016-04331 and a Sloan Research Fellowship. The second author has been supported by the German Research Foundation through the Hausdorff Center for Mathematics and the Collaborative Research Center 1060. The third author has been supported by NSF grant DMS-1809011. All three authors thank the Fields Institute in Toronto for support during the thematic program on Geometric Analysis.}}
\date{\today}

\maketitle

\begin{abstract}
Recall that if $(M^n,g)$ satisfies $\Ric\geq 0$, then the Li-Yau Differential Harnack Inequality tells us for each nonnegative $f:M\to \dR^+$, with $f_t$ its heat flow, that $\frac{\Delta f_t}{f_t}-\frac{|\nabla f_t|^2}{f_t^2} +\frac{n}{2t}\geq 0.$  Our main result will be to generalize this to path space $P_xM$ of the manifold.\\

A key point is that instead of considering infinite dimensional gradients and Laplacians on $P_xM$ we will consider, in a spirit similar to \cite{Naber_char,haslhofernaberricci}, a family of finite dimensional gradients and Laplace operators.  Namely, for each $H^1_0$-function $\varphi:\dR^+\to \dR$ we will define the $\varphi$-gradient $\nabla_\varphi F: P_xM\to T_xM$ and the $\varphi$-Laplacian $\Delta_\varphi F =\text{tr}_\varphi\Hess F:P_xM\to \dR$, where $\Hess F$ is the Markovian Hessian and both the gradient and the $\varphi$-trace are induced by $n$ vector fields naturally associated to $\varphi$ under stochastic parallel translation.\\

Now let $(M^n,g)$ satisfy $\Ric=0$, then for each nonnegative $F:P_xM\to \dR^+$ we will show the inequality $$\frac{\Ex [\Delta_\varphi F]}{\Ex [F]}-\frac{\Ex [\nabla_\varphi  F]^2}{\Ex [F]^2} +\frac{n}{2}|| \varphi ||^2\geq 0$$ for each $\varphi$, where $\Ex$ denotes the expectation with respect to the Wiener measure on $P_xM$. By applying this to the simplest functions on path space, namely cylinder functions of one variable $F(\gamma) \equiv f(\gamma(t))$, we will see we recover the classical Li-Yau Harnack inequality exactly.  We have similar estimates for Einstein manifolds, with errors depending only on the Einstein constant, as well as for general manifolds, with errors depending on the curvature.  Finally, we derive generalizations of Hamilton's Matrix Harnack inequality on path space $P_xM$.  It is our understanding that these estimates are new even on the path space of $\dR^n$.
\end{abstract}

\newpage

\tableofcontents

\newpage

\section{Introduction}

\subsubsection*{Differential Harnack Inequalities on Manifolds}

Let us open by recalling the classical differential Harnack inequalities on manifolds.  Thus, consider a Riemannian manifold $(M^n,g)$ and for $f:M\to \dR$ denote by $ f_t=H_tf :M\to \dR$ the solution of the heat equation $(\partial_t - \Delta)f_t=0$ with $f_0=f$.  The classical Li-Yau differential Harnack inequality \cite{LiYau} tells us that if $f$ is nonnegative and if $\Ric\geq 0$, then we have
\begin{align}\label{e:li_yau_harnack}
\frac{\Delta f_t}{f_t}-\frac{|\nabla f_t|^2}{f_t^2} +\frac{n}{2t}\geq 0\, .	
\end{align}
While there are many other useful sharp estimates on heat flows which play an important role in analysis for manifolds with nonnegative Ricci curvature, for instance the Bakry-Emery \cite{BakryEmery} estimate $|\nabla H_t f|\leq H_t|\nabla f|$, the differential Harnack inequality distinguishes itself in that it directly incorporates the dimension into the underlying estimate.  Thus, the differential Harnack inequality is the usual starting point for many estimates on heat kernels, and other estimates which directly rely on the underlying dimension.  For instance, integrating along a suitable space-time geodesic gives the sharp classical Harnack estimate
\begin{equation}
f_{t_2}(x_2)\geq \left(\tfrac{t_1}{t_2}\right)^{n/2}e^{-\frac{d(x_1,x_2)^2}{4(t_2-t_1)}} f_{t_1}(x_1)\, .
\end{equation}
The differential Harnack inequality \eqref{e:li_yau_harnack} and many of its implications are sharp and obtained when considering the heat kernel on Euclidean space.  As another application we can apply \eqref{e:li_yau_harnack} to the heat kernel $\rho_{x,t}(y)=\rho_t(x,y)$, centered at some point $x\in M$, in order to obtain the estimate
\begin{align}\label{e:li_yau_heat_kernel}
\Delta \ln\rho_{x,t} \geq -\frac{n}{2t}\, .
\end{align}
One can interpret the above as a smoothing of the classical Laplacian comparison theorems for the distance function.  In addition to the Li-Yau Harnack inequality there is also Hamilton's Matrix Harnack inequality \cite{Hamilton_matrix_harnack}.  In the context where one assumes the stronger geometric constraints $\nabla\Ric=0$ and $\sec\geq 0$, Hamilton proved the Hessian version of \eqref{e:li_yau_harnack} given by
\begin{align}\label{e:matrix_harnack}
\frac{\nabla^2 f_t}{f_t}-\frac{\nabla f_t\otimes \nabla f_t}{f_t^2} +\frac{g}{2t}\geq 0\, .	
\end{align}

\subsubsection*{Harnack and Basics of Path Space $P_xM$}

The goal of this paper is to extend the differential Harnack inequalities to the context of the path space $P_xM$ of a manifold.  We will have generalizations of the Li-Yau differential Harnack inequality \eqref{e:li_yau_harnack}, the Hamilton Matrix Harnack inequality \eqref{e:matrix_harnack}, and the heat kernel estimate \eqref{e:li_yau_heat_kernel} to the path space context.  These extensions will require some work to detail, which we will do step by step over the next several subsection, for now let us open with some general comments followed by some standard constructions on analysis on path space.  To begin, let us be careful and remark that our notion of path space is the collection of continuous based paths:
\begin{align}
P_xM \equiv \big\{\gamma\in C^0([0,\infty),M):\gamma(0)=x\big\}\, .	
\end{align}
Performing analysis on $P_xM$, like performing analysis on any space, involves three important ingredients: A nice dense collection of functions to work with, a measure to integrate with, and a notion of gradient.  The first two of these ingredients will be standard notions in this context, which we will review now.  The notion of gradient we will introduce in this paper, and its induced Laplacian, will be new.  The $\varphi$-gradient $\nabla_\varphi$ and $\varphi$-Laplacian $\Delta_\varphi$ will act more as a family of finite dimensional gradients and Laplacians, in the spirit of \cite{Naber_char,haslhofernaberricci}.  We will introduce these a little more slowly over the coming subsections.\\

Let us now finish our introductory review by dealing with the first two ingredients above, namely the construction of nice functions and the Wiener measure.  Both are built using the canonical evaluation maps on path space.  Namely, consider a partition ${\bf{t}} =\{0<t_1<\cdots<t_k<\infty\}$, then from this we can build the evaluation map $e_{{\bf{t}}}:P_xM\to M^k$ given by
\begin{equation}
e_{\bf{t}}(\gamma) = (\gamma_{t_1},\ldots,\gamma_{t_k})\, .
\end{equation}
From this we can generate functions on $P_xM$ by pullback.  Namely, given a partition ${\bf{t}}$ and a function $f:M^k\to \dR$ the induced cylinder function $F:P_xM\to \dR$ on path space is given by
\begin{equation}
F(\gamma) = e_{\bf{t}}^*f(\gamma) = f(\gamma_{t_1},\ldots,\gamma_{t_k})\, .
\end{equation}
These functions have a distinctly finite dimensional quality to them, and as such will be particularly easy and natural to work with.  In the end these functions will be dense in every space of functions we need to work on, so it will be sufficient to do most computations with respect to them.  

 In a similar vein, path space $P_xM$ is equipped with a natural probability measure $\PP_x$, called the Wiener measure, which is uniquely defined through its pushforwards by the evaluation maps:
\begin{align}\label{e:Wiener_measure_pushforwards}
e_{{\bf{t}}\,\ast}\PP_x = \rho_{t_1}(x,dx_1)\rho_{t_2-t_1}(x_1,dx_2)\cdots \rho_{t_k-t_{k-1}}(x_{k-1},dx_k)\, ,	
\end{align}
where $\rho_t(x,dy)=\rho_t(x,y)dv_g(y)$ are the heat kernel measures.  It is a beautiful classical result that $\PP_x$ exists as a measure on continuous path space $P_xM$.  In this way the Wiener measure not only tells us about the heat kernels at all times and points, but also how they interact with one another.\\

Let us now move ourselves toward the new results, during which time we will introduce the notions of gradient and Laplacian that will prove themselves most important.

\subsection{Differential Harnack Inequalities on Path Space of $\dR^n$}

Let us begin by analyzing the context of path space on flat Euclidean space.  Our results are new even in this setting, and beyond that it will be an excuse to analyze the estimates and inequalities in a context where many of the technical bells and whistles will not be present.  We will be interested in studying continuous paths based at the origin:
\begin{align}
P_0\dR^n = \big\{\gamma\in C^0\big([0,\infty),\dR^n\big):\gamma(0)=0\big\}\, .	
\end{align}
On $P_0\dR^n$ we can consider the Wiener probability measure $\PP_0$, defined as in \eqref{e:Wiener_measure_pushforwards}.  It is interesting to observe one can view this measure as a Gaussian measure on $P_0\dR^n$ with standard deviation coming from the $H^1$-norm.  As such, when performing analysis on path space it is convenient to often restrict ourselves to directions which are $H^1$ in nature, which gives rise to the Cameron-Martin space:
\begin{equation}\label{e:cameron_martin}
\mathcal{H}=\left\{ h\in P_0(\mathbb{R}^n) \, : \, \norm{h}^2_{\cH} \equiv \int_0^\infty \abs{\dot h}^2dt <\infty\right\}\, .
\end{equation}

  Our first main result in the rigid context of path space on $\dR^n$ is the following, which we will use as an inspiration for our generalized Matrix Harnack inequality in the path space setting:\\

\begin{theorem}[convexity]\label{thm_harnack_path_space}
If $F:P_0\mathbb{R}^n\to \mathbb{R}^+$ is a positive integrable function, then the associated functional
\begin{equation}\label{eq_def_phi}
\Phi_F: \mathcal{H}\to\mathbb{R},\quad \Phi_F(h)=\ln\left(\int_{P_0\mathbb{R}^n} F(\gamma+h)\, d\PP_0(\gamma)\right) + \frac{1}{4}\norm{h}^2_{\cH}
\end{equation}
is convex.
\end{theorem}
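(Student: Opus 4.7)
My approach is to invoke the Cameron--Martin theorem to absorb the quadratic penalty $\tfrac{1}{4}\norm{h}^2_\cH$ into the change of variables, thereby reducing $\Phi_F$ to a log-Laplace transform, which is automatically convex.

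First I would identify the relevant version of the Cameron--Martin shift formula. Since the transition densities used to define $\PP_0$ in \eqref{e:Wiener_measure_pushforwards} are the heat kernels $\rho_t(x,y)=(4\pi t)^{-n/2}e^{-|x-y|^2/(4t)}$ of $\partial_t-\Delta$, the canonical process $\gamma$ under $\PP_0$ is a Brownian motion with quadratic variation $\langle\gamma\rangle_t=2t$, i.e.\ $\sqrt{2}$ times a standard Brownian motion. Rescaling the standard Cameron--Martin density accordingly yields, for each $h\in\cH$,
\begin{equation*}
\int_{P_0\dR^n} F(\gamma+h)\,d\PP_0(\gamma)
 = \int_{P_0\dR^n} F(\gamma)\,
   \exp\!\left(\tfrac{1}{2}\int_0^\infty \dot h(s)\cdot d\gamma(s)
               - \tfrac{1}{4}\norm{h}^2_\cH\right) d\PP_0(\gamma),
\end{equation*}
where the stochastic integral is in the It\^{o} sense. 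The crucial observation is that the normalizing constant coming out of Cameron--Martin is exactly $\tfrac{1}{4}\norm{h}^2_\cH$, matching the quadratic term built into $\Phi_F$.

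Substituting this identity into \eqref{eq_def_phi}, the two $\tfrac{1}{4}\norm{h}^2_\cH$ terms cancel and I obtain
\begin{equation*}
\Phi_F(h) = \ln\int_{P_0\dR^n} F(\gamma)\,
           e^{\frac{1}{2}\int_0^\infty \dot h(s)\cdot d\gamma(s)}\, d\PP_0(\gamma).
\end{equation*}
Because $h\mapsto \tfrac{1}{2}\int_0^\infty \dot h(s)\cdot d\gamma(s)$ is linear in $h$, for any $h_0,h_1\in\cH$ and $\tau\in[0,1]$ the integrand at $(1-\tau)h_0+\tau h_1$ factors as $\bigl(Fe^{\frac{1}{2}\int\dot h_0\,d\gamma}\bigr)^{1-\tau}\bigl(Fe^{\frac{1}{2}\int\dot h_1\,d\gamma}\bigr)^{\tau}$. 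H\"older's inequality with conjugate exponents $\tfrac{1}{1-\tau}$ and $\tfrac{1}{\tau}$, applicable because $F>0$, then yields $\Phi_F((1-\tau)h_0+\tau h_1)\le (1-\tau)\Phi_F(h_0)+\tau\Phi_F(h_1)$ at once.

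The only substantive issue is conceptual rather than technical: one has to recognize that the coefficient $\tfrac{1}{4}$ in the definition of $\Phi_F$ is precisely the Cameron--Martin normalization for the Wiener measure associated with $\partial_t-\Delta$, so that it exactly cancels under the shift. Once that is in place, the proof reduces to one application of Cameron--Martin followed by one application of H\"older, with no serious analytic obstacle to overcome; the same argument will presumably serve as the flat-space model for the curved matrix Harnack inequality promised later in the paper.
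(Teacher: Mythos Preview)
Your proposal is correct and follows essentially the same argument as the paper: apply the Cameron--Martin formula (with the $\tfrac14\norm{h}^2_\cH$ normalization appropriate to the $\partial_t-\Delta$ heat kernel) to cancel the quadratic term, then use H\"older's inequality on the resulting log-Laplace transform. The paper's proof is identical in structure and length.
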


We will provide the short proof of the above in Section \ref{sec_euclidean}, for now let us consider an enlightening example obtained by applying the above to the simplest functions on path space:

\begin{example}\label{example_intro}
Consider the cylinder function $F:P_0\dR^n\to \dR^+$	 given by $F(\gamma) = f(\gamma(t))$, where $f:\dR^n\to \dR^+$ and $t>0$ are fixed.  Consider a linear curve $h(s)\equiv \frac{s}{t}x\in \dR^n$ connecting the origin to a point $x\in \dR^n$, and for each direction $v\in \dR^n$ and each $r\in\mathbb{R}$ consider the perturbation of $h$ in the $v$ direction given by $h_r(s)\equiv \frac{s}{t}(x+rv)\in \dR^n$ for $s\leq t$.  That is, $h_r(s)$ is simply the straight curve from the origin to $h_r(t)=x+rv$, so in particular $h_0(s)=h(s)$.  Now using the pushforward characterization \eqref{e:Wiener_measure_pushforwards} of the Wiener measure  we can compute
\begin{align}
\Phi_F(h_r) = \ln\left(\int_{\dR^n} f(y+x+rv)\rho_t(0,dy) \right)+\frac{|x+rv|^2}{4t}	= \ln f_t(x+rv)+\frac{|x+rv|^2}{4t}\, .
\end{align}
Then the convexity condition $\frac{d^2}{dr^2}\Big|_{r=0}\Phi_F(h_r)\geq 0$ converts to the inequality
\begin{align}
\frac{\nabla^2 f_t(v,v)}{f_t} - \frac{\ip{ \nabla f_t, v}^2}{f_t^2} +\frac{|v|^2}{2t}\geq 0
\end{align}
for every $v$, which is precisely the Matrix Harnack inequality \eqref{e:matrix_harnack}. $\qed$\\
\end{example}

Generalizing the above example, given $f:\mathbb{R}^{n\times k}\to\mathbb{R}^+$ and $0<t_1<\ldots < t_k$ we can consider 
\begin{equation}
H_{t_1,\ldots,t_k}f(x_1,\ldots,x_k):=\int_{\mathbb{R}^{n k}} f(y_1+x_1,\ldots,y_k+x_k) \rho_{t_1}(0,dy_1)\rho_{t_2-t_1}(y_1,dy_2)\ldots \rho_{t_k-t_{k-1}}(y_{k-1},dy_k),
\end{equation}
which can be interpreted as a (completely correlated) generalization of the heat flow, and obtain:

\begin{corollary}[convexity for generalized heat flow]
The function $\psi_f: \mathbb{R}^{n\times k}\to \mathbb{R}$,
\begin{equation}
\psi_f (x_1,\ldots,x_k)=\ln H_{t_1,\ldots,t_k}f(x_1,\ldots,x_k)+\frac{|x_1|^2}{4t_1}+\frac{|x_2-x_1|^2}{4(t_2-t_1)}+\ldots+\frac{|x_k-x_{k-1}|^2}{4(t_k-t_{k-1})}
\end{equation}
is convex.
\end{corollary}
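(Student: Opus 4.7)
The plan is to deduce the corollary by applying Theorem \ref{thm_harnack_path_space} to a suitably chosen cylinder function, and then restricting the convex functional $\Phi_F$ to a finite-dimensional affine subspace of the Cameron-Martin space $\mathcal{H}$. The model is exactly Example \ref{example_intro}: there a linear path in $\mathcal{H}$ recovered the Matrix Harnack inequality from $\Phi_F$. Here we will instead use a piecewise linear path with $k$ corners.

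More concretely, first I would define the cylinder function $F:P_0\mathbb{R}^n\to\mathbb{R}^+$ associated with $f$ and the partition $0<t_1<\ldots<t_k$ by
\begin{equation}
F(\gamma) = f\bigl(\gamma_{t_1},\ldots,\gamma_{t_k}\bigr).
\end{equation}
Then, for each $(x_1,\ldots,x_k)\in\mathbb{R}^{n\times k}$, I would consider the piecewise linear path $h=h_{x_1,\ldots,x_k}\in \mathcal{H}$ characterized by $h(0)=0$, $h(t_i)=x_i$ for $i=1,\ldots,k$, affine on each subinterval $[t_{i-1},t_i]$, and constant on $[t_k,\infty)$. A direct computation of the Cameron-Martin norm gives
\begin{equation}
\tfrac{1}{4}\|h\|^2_{\mathcal{H}} = \frac{|x_1|^2}{4t_1}+\frac{|x_2-x_1|^2}{4(t_2-t_1)}+\ldots+\frac{|x_k-x_{k-1}|^2}{4(t_k-t_{k-1})}.
\end{equation}

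Next, using the pushforward characterization \eqref{e:Wiener_measure_pushforwards} of the Wiener measure $\PP_0$ under the evaluation map $e_{(t_1,\ldots,t_k)}$ and translation invariance of the heat kernel on $\mathbb{R}^n$, I would identify
\begin{equation}
\int_{P_0\mathbb{R}^n} F(\gamma+h)\,d\PP_0(\gamma) = \int_{\mathbb{R}^{nk}} f(y_1+x_1,\ldots,y_k+x_k)\,\rho_{t_1}(0,dy_1)\cdots \rho_{t_k-t_{k-1}}(y_{k-1},dy_k) = H_{t_1,\ldots,t_k}f(x_1,\ldots,x_k).
\end{equation}
Combining the last two displays with the definition \eqref{eq_def_phi} of $\Phi_F$ yields exactly $\Phi_F(h_{x_1,\ldots,x_k}) = \psi_f(x_1,\ldots,x_k)$.

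Finally, I would observe that the map $L:\mathbb{R}^{n\times k}\to\mathcal{H}$ sending $(x_1,\ldots,x_k)$ to $h_{x_1,\ldots,x_k}$ is linear, so $\psi_f = \Phi_F\circ L$ is the composition of the convex function $\Phi_F$ (given by Theorem \ref{thm_harnack_path_space}) with a linear map, hence convex. There is no real obstacle here beyond checking the computation of $\|h\|_{\mathcal{H}}^2$ and the pushforward identity; the entire content of the corollary is the selection of the correct finite-dimensional slice of $\mathcal{H}$ on which the path-space convexity specializes to a statement about generalized heat flows.
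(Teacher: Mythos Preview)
Your proposal is correct and follows essentially the same approach as the paper's proof: define the cylinder function $F$, construct the piecewise linear map $L:\mathbb{R}^{n\times k}\to\mathcal{H}$, verify that $\Phi_F\circ L=\psi_f$ via the pushforward formula and the Cameron--Martin norm computation, and conclude convexity from Theorem \ref{thm_harnack_path_space} composed with the linear map $L$. The paper's argument is identical in structure and detail.
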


We have therefore seen that Theorem \ref{thm_harnack_path_space} behaves as a natural path space generalization of the Matrix Harnack Inequality, and indeed recovers it exactly when applied to the simplest functions on path space.  

Our next challenge is that Theorem \ref{thm_harnack_path_space} as written does not generalize to manifolds.  We will therefore look for weak reformulations which have some hope of being defined on general manifolds.  This will eventually lead us to our differential Harnack inequalities.  

There are many approaches one can naturally take to write Theorem \ref{thm_harnack_path_space} weakly, the statements and definitions of our next results are motivated by giving a presentation which will extend in a natural manner to more general manifolds.  We begin by introducing the $\varphi$-gradient in the Euclidean context:

\begin{definition}\label{def_eucl_phi_grad}
Let $\varphi:[0,\infty)\to \dR$ be an $H^1_0$-function, i.e.  $||\varphi||^2\equiv \int |\dot\varphi|^2 <\infty $ and $\varphi(0)=0$.  For $F:P_0\dR^n\to \dR$ we define its $\varphi$-gradient $\nabla_{\varphi} F:P_0\dR^n\to \dR^n$ by
\begin{align}
\ip{ \nabla_{\varphi} F(\gamma), v} \equiv D_{\varphi v} F =\lim_{\eps\to 0} \frac{F(\gamma+\eps\varphi v)-F(\gamma)}{\eps}\, .	
\end{align}
\end{definition}

Note that, in a spirit similar to \cite{Naber_char,haslhofernaberricci}, the $\varphi$-gradient $\nabla_{\varphi} F$ is essentially a finite dimensional gradient as it only considers information about the derivative of $F$ in those directions determined by $\varphi$.  By considering an orthonormal basis $\{\varphi_j\}$ we can recover the full Malliavin gradient $\nabla^{\mathcal{H}} F:P_0\dR^n\to \cH$.  

In addition to the $\varphi$-gradient we will want to define the associated $\varphi$-Hessians and $\varphi$-Laplacians:

\begin{definition}\label{def_eucl_phi_hess}
Let $\varphi:[0,\infty)\to \dR$ be an $H^1_0$-function.  For $F:P_0\dR^n\to \dR$ we define 
\begin{enumerate}
\item The $\varphi$-Hessian $\Hess_{\varphi} F:P_0\dR^n\to \dR^{n\times n}$ given by\footnote{It is worth observing that the definition in the Euclidean context is greatly simplified, as $\varphi v$ is a constant vector field and thus $D_{\varphi v} D_{\varphi w}F$ is a Hessian.  In the general case we must subtract off the correct Christoffel symbol.}
\begin{align}
\ip{ \Hess_{\varphi} F(\gamma), v\otimes w} \equiv D_{\varphi v} D_{\varphi w}F \, .
\end{align}	
\item The $\varphi$-Laplacian $\Delta_\varphi F:P_0\dR^n\to \dR$ given by $\Delta_{\varphi}\, F= \tr(\Hess_{\varphi} F)$.
\end{enumerate}
\end{definition}

Considering an orthonormal basis $\{\varphi_j\}$ we can recover the $H^1$-Laplacian $\Delta_{\cH}$, so that in this way we have naturally decomposed the infinite dimensional Laplacian into a sum of finite dimensional Laplacians.  We can now use Theorem \ref{thm_harnack_path_space} in order to prove the following:

\begin{theorem}[Differential Harnack inequality on path space of Euclidean space]\label{thm_diff_harnack_eucl}
If $F:P_0\dR^n\to \dR^+$ is a nonnegative integrable function, then for all test functions $\varphi\in H^1_0(\dR^+)$ we have 
\begin{align}
 &\frac{\dE\left[\Hess_{\varphi}F\right]}{\dE[F]}
 -\frac{\dE\left[\nabla_\varphi F\right]\otimes \dE\left[\nabla_\varphi F\right]}{\dE[F]^2}
 +\frac{1}2\norm{\varphi}^2\geq 0\, ,
\end{align}
where $\dE$ denotes the expectation with respect to the Wiener measure $\PP_0$.  In particular, we can trace to obtain
\begin{align}
 &\frac{\dE\left[\Delta_{\varphi}F\right]}{\dE[F]}
 -\frac{\left|\dE\left[\nabla_\varphi F\right]\right|^2}{\dE[F]^2}
 +\frac{n}{2}\norm{\varphi}^2\geq 0.
\end{align}
\end{theorem}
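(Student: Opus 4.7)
The plan is to obtain the matrix differential Harnack inequality as the infinitesimal form of the convexity statement in Theorem \ref{thm_harnack_path_space}, by restricting the convex functional $\Phi_F$ on $\cH$ to the one-parameter family of Cameron--Martin perturbations of the very specific shape $h_r = r\,\varphi v$ with $v\in\dR^n$ fixed and $\varphi\in H^1_0(\dR^+)$ the test function appearing in the statement. The traced scalar version will then follow by summing over $v$ in an orthonormal basis of $\dR^n$.

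First, I would fix $v\in\dR^n$ and observe that $h_r$ lies in $\cH$ with $\|h_r\|^2_\cH = r^2 |v|^2 \|\varphi\|^2$, so Theorem \ref{thm_harnack_path_space} specializes to the convexity in $r\in\dR$ of the real-valued function
\begin{equation*}
r \;\longmapsto\; \ln \dE\bigl[F(\gamma+r\varphi v)\bigr] \;+\; \frac{r^2}{4}\,|v|^2\,\|\varphi\|^2.
\end{equation*}
Next, I would differentiate twice at $r=0$. By Definitions \ref{def_eucl_phi_grad}--\ref{def_eucl_phi_hess}, the directional derivatives are identified as $\partial_r F(\gamma+r\varphi v)\big|_{r=0} = \langle \nabla_\varphi F(\gamma), v\rangle$ and $\partial_r^2 F(\gamma+r\varphi v)\big|_{r=0} = \langle \Hess_\varphi F(\gamma), v\otimes v\rangle$. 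Applying the standard formula $\partial_r^2 \ln g = g''/g - (g'/g)^2$ to $g(r) = \dE[F(\gamma+r\varphi v)]$ and interchanging $\partial_r$ with the expectation, the inequality $\partial_r^2\big|_{r=0}\Phi_F(h_r)\ge 0$ translates verbatim into
\begin{equation*}
\frac{\bigl\langle \dE[\Hess_\varphi F],\, v\otimes v\bigr\rangle}{\dE[F]} \;-\; \frac{\langle \dE[\nabla_\varphi F], v\rangle^2}{\dE[F]^2} \;+\; \frac{|v|^2}{2}\|\varphi\|^2 \;\ge\; 0,
\end{equation*}
which, since $v$ was arbitrary, is precisely the matrix Harnack inequality of the theorem. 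To obtain the traced version, I would sum this inequality over $v$ running through an orthonormal basis $\{e_1,\ldots,e_n\}$: the first term collapses to $\dE[\Delta_\varphi F]/\dE[F]$ by the very definition of $\Delta_\varphi$, the middle term collapses to $|\dE[\nabla_\varphi F]|^2/\dE[F]^2$, and the remainder contributes the advertised factor $(n/2)\|\varphi\|^2$.

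The only step that is not entirely mechanical is the justification of differentiation under the expectation used to compute $g'(0)$ and $g''(0)$. This is routine by dominated convergence once one works on the standard dense class of smooth cylinder functions with sufficient decay (where $F$ and its iterated $\varphi$-directional derivatives are bounded), and then extends the estimate to general nonnegative integrable $F$ by approximation. No new geometric input beyond Theorem \ref{thm_harnack_path_space} is required, so the present theorem is in essence a clean infinitesimal reformulation of that convexity statement.
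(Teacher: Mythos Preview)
Your proof is correct and follows essentially the same approach as the paper: restrict $\Phi_F$ to the one-parameter family $h_\eps=\eps\varphi v$, use the convexity from Theorem \ref{thm_harnack_path_space}, differentiate twice at $\eps=0$, identify the resulting terms via Definitions \ref{def_eucl_phi_grad}--\ref{def_eucl_phi_hess}, and then trace over an orthonormal basis. Your treatment is in fact slightly more careful than the paper's, since you flag the differentiation-under-expectation step and the approximation by cylinder functions explicitly.
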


Theorem \ref{thm_diff_harnack_eucl} can be viewed as an infinite family of finite dimensional differential Harnack inequalities on path space. It is not hard to see that Theorem \ref{thm_diff_harnack_eucl} and Theorem \ref{thm_harnack_path_space} in fact imply each other. The formulation as a differential Harnack inequality, as opposed to a convexity statement, is more suitable for our generalizations to the path space of manifolds.\\

\subsection{The $\varphi$-Gradient and $\varphi$-Laplacian}

In order to state our results on general manifolds we need to discuss the notion of $\varphi$-gradients and $\varphi$-Laplacians on manifolds.  Let us begin by defining the notion of the $\varphi$-gradient:

\begin{definition}[$\varphi$-gradient]\label{d:phi_gradient}
Let $F:P_xM\to \dR$ be a cylinder function, and let $\varphi:[0,\infty)\to \dR$ be an $H^1_0$-function, i.e. a function such that $||\varphi||^2\equiv \int |\dot\varphi|^2<\infty $ and $\varphi(0)=0$.   Then we define the $\varphi$-gradient $\nabla_\varphi F:P_xM\to T_xM$ by 
\begin{align}
	\ip{\nabla_\varphi F,v} = D_{\varphi V} F,
\end{align}
where $V$ is the vector field along $\gamma$ obtained by parallel translating $v$ along $\gamma$,\footnote{We need to use the stochastic parallel translation map to make this precise on a generic curve, see Section \ref{sec_prelim}.} and thus $D_{\varphi V}$ is the directional derivative of $F$ in the direction $\varphi V\in T_\gamma P_xM$.
\end{definition}

The $\varphi$-gradient is essentially a finite dimensional gradient, in a spirit similar to \cite{Naber_char,haslhofernaberricci}.  It contains information about the directional derivatives of $F$ in all directions determined by $\varphi$.  As in the Euclidean case, by considering an orthonormal basis $\{\varphi_j\}$ of $H^1_0$ we see that we can recover the full Malliavin-gradient $\nabla^{\mathcal{H}} F:P_xM \to \cH$. \\

In order to define a Hessian we must consider covariant derivatives of vector fields on path space.  Two considerations when defining a connection on $P_xM$ are that one wishes it to be compatible with the $H^1_0$-metric, and wishes it to preserve adapted vector fields.  Among such connections there is a best choice, which was introduced in Cruzeiro-Malliavin \cite{cruzeiromalliavin}, called the Markovian connection.  
To define the Markovian connection, recall that vector fields $V$ on $P_xM$ can be identified with functions $v_t:P_xM\to T_xM\equiv \mathbb{R}^n$ via parallel transport. Namely, we can take $V(\gamma)_t\in T_{\gamma_t}M$ and map it using the parallel translation map $P_t(\gamma): T_{\gamma_t}M\to T_x M$ to get
\begin{equation}
v_t(\gamma):= P_t(\gamma)V(\gamma)_t\in T_xM.
\end{equation}

\begin{definition}[Markovian Connection]\label{d:intro:mark_conn}
	The Markovian connection $\nabla$ on $P_xM$ is given by\footnote{To be precise, the integral should be viewed as Stratonovich integral, see Section \ref{sec_prelim}.}
\begin{align}\label{e:d:intro:mark_conn}
\frac{d}{dt}P_t(\nabla_V W)_t = D_V\dot{w}_t+ \left(\int_0^tP_{s}\Rm_{\gamma_s}(V_s,\dot{\gamma}_s)\, ds\right) \, \dot{w}_t\, ,	
\end{align}
where $P_t:T_{\gamma_t}M\to T_xM$ denotes the parallel translation map, and where $w_t=P_tW_t$.
\end{definition}

We note that the curvature term in \eqref{e:d:intro:mark_conn} arises as the derivative of the parallel translation map.

 Given the Markovian connection $\nabla$, the Markovian Hessian of a function $F:P_xM\to \mathbb{R}$ is now naturally defined by
 \begin{align}\label{d:intro:mark_hess}
 \Hess F (V,W) \equiv  D_V(D_W F) - D_{\nabla_V W} F\, ,
 \end{align}
where $D$ denotes the directional derivatives. Using this, we can now introduce the $\varphi$-Hessian and $\varphi$-Laplacian, which will play a central roles in our differential Harnack inequalities:

\begin{definition}[$\varphi$-Hessian and $\varphi$-Laplacian]\label{d:phi_laplacian}
Let $F:P_xM\to \dR$ be a cylinder function, and let $\varphi:[0,\infty)\to \dR$ be an $H^1_0$-function, i.e. a function such that $||\varphi||^2\equiv \int |\dot\varphi|^2<\infty $ and $\varphi(0)=0$. 
\begin{enumerate}
\item We define $\Hess_\varphi F:P_xM\to T_x^\ast M\otimes T_x^\ast M$ by
\begin{align}
	\Hess_\varphi F(v,v) = \Hess F(\varphi V,\varphi V)\, ,
\end{align}
where $V$ is the vector field along $\gamma$ obtained by parallel translating $v$ along $\gamma$.
\item $\Delta_\varphi F = \tr\big(\Hess_\varphi F\big):P_xM\to \dR$ is the $\varphi$-Laplacian obtained by tracing the $\varphi$-Hessian. 
\end{enumerate}
\end{definition}

To understand the meaning of this definition, consider for each $\varphi$ an $n$-dimensional distributional $E_\varphi\subset T P_xM$ given by
\begin{align}\label{e:varphi_distribution}
E_\varphi = \text{span}\big\{\,\varphi V: \text{$V$ is the parallel translation of a vector $v\in T_xM$}\big\}	\, .
\end{align}
Thus, at each $\gamma\in P_xM$ we have that $E_\varphi(\gamma)$ is an $n$-dimensional subspace of $T_\gamma P_xM$.  Then the $\varphi$-Hessian and the $\varphi$-Laplacian are simply given by
\begin{equation}
\Hess_\varphi F = \Hess F \big|_{E_\varphi\otimes E_\varphi }\, ,
\end{equation}
and
\begin{equation}
\Delta_\varphi F = \tr_{E_\varphi} \Hess F\,.
\end{equation}

In particular, the $\varphi$-Laplacian is simply the trace of the infinite dimensional Hessian along the finite dimensional subspace $E_\varphi$.  Hence, in the same spirit as the $\varphi$-gradients, the $\varphi$-Laplacians behave as a family of finite dimensional Laplacians.  This is crucial for us, as our generalization of the differential Li-Yau Harnack inequality will actually be a family of inequalities, one for each $\Delta_\varphi$.\\

\subsection{Differential Harnack Inequalities on Path Space of Ricci Flat Manifolds}

Now we are in a position to discuss our first more general estimates.  We begin with the Ricci-flat context primarily because the estimates are cleaner and easier to digest.  The general cases will follow in the next subsections.  Our main theorem in the Ricci-flat case is the following:\\

\begin{theorem}[Differential Harnack inequality on path space]\label{thmintro:harnack_ricci_flat}
Let $M$ be a Ricci-flat manifold, and let $F:P_xM\to \dR$ be a nonnegative function. Then, for all $\varphi\in H^1_0(\mathbb{R}^+)$ we have the inequality
\begin{align}\label{e:harnack_ricci_flat_intro}
 &\frac{\Ex\left[\Delta_{\varphi}F\right]}{\Ex[F]}
 -\frac{\big|{\Ex\left[\nabla_\varphi F\right]}\big|^2}{\Ex[F]^2}
 +\frac{n}2\norm{\varphi}^2\geq 0.
\end{align}
\end{theorem}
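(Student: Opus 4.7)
The plan is to reduce the manifold inequality to the Euclidean differential Harnack inequality (Theorem~\ref{thm_diff_harnack_eucl}) via the stochastic development map, with the Ricci-flat hypothesis ensuring that a curvature correction vanishes when identifying the relevant quantities on the two sides.

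First I would set up the reduction. Let $\Psi:P_0\dR^n\to P_xM$ denote the stochastic development map, constructed by solving the standard horizontal SDE on the orthonormal frame bundle; it satisfies $\Psi_\ast\PP_0=\PP_x$. Given $F:P_xM\to\dR^+$ as in the statement, form the pullback $G:=F\circ\Psi:P_0\dR^n\to\dR^+$. Applying Theorem~\ref{thm_diff_harnack_eucl} to $G$ with the same $\varphi$ produces
\[
\frac{\dE[\Delta_\varphi G]}{\dE[G]}-\frac{\abs{\dE[\nabla_\varphi G]}^2}{\dE[G]^2}+\frac{n}{2}\norm{\varphi}^2\geq 0,
\]
where $\dE$ denotes expectation with respect to $\PP_0$. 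The task then is to identify $\dE[\nabla_\varphi G]=\Ex[\nabla_\varphi F]$ and $\dE[\Delta_\varphi G]=\Ex[\Delta_\varphi F]$.

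For the gradient, the classical computation that the Gateaux derivative of $\Psi$ in a Cameron-Martin direction $\varphi v\in\cH$ is the parallel-translation lift $\varphi V$, where $V_t\in T_{\gamma_t}M$ is the stochastic parallel transport of $v\in T_xM$ along $\gamma=\Psi(W)$, gives, by Definitions~\ref{def_eucl_phi_grad} and \ref{d:phi_gradient},
\[
\ip{\nabla_\varphi G(W),v}=D_{\varphi v}G(W)=D_{\varphi V}F(\Psi(W))=\ip{\nabla_\varphi F(\Psi(W)),v}.
\]
Integrating against $\PP_0$ and using $\Psi_\ast\PP_0=\PP_x$ yields the equality of expectations. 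The Laplacian identification is the heart of the argument and is where Ricci flatness enters. The Euclidean second variation $D_{\varphi v}^2 G$ pulls back to the naive iterated derivative $D_{\varphi V}^2F$ on $P_xM$, whereas the Markovian Hessian, per Definitions~\ref{d:intro:mark_conn} and \ref{d:phi_laplacian}, is
\[
\Hess_\varphi F(v,v)=D_{\varphi V}^2F-D_{\nabla_{\varphi V}(\varphi V)}F,
\]
with the connection correction $\nabla_{\varphi V}(\varphi V)$ built from the integrated Riemann curvature $\int_0^s P_u\Rm_{\gamma_u}(\varphi V_u,\dot\gamma_u)\,du$ driving the scalar factor $\dot\varphi(s)v$. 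Summing over an orthonormal basis $\{v_i\}_{i=1}^n$ of $T_xM$ and applying It\^o's formula to the Brownian-driven $\dot\gamma$, the curvature correction contracts, in expectation, to an integrated Ricci tensor along $\gamma$; under $\Ric\equiv 0$ this correction vanishes, so $\dE[\Delta_\varphi G]=\Ex[\Delta_\varphi F]$. Substituting the two identifications into the Euclidean inequality yields the claim.

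The main obstacle I anticipate is precisely this Laplacian identification: carrying out rigorously the bookkeeping of iterated stochastic integrals needed to pass between $D_{\varphi V}^2F$ and the Markovian $\Hess_\varphi F$, and verifying that the only curvature term that survives after the trace collapses to the Ricci tensor. The Markovian connection of Definition~\ref{d:intro:mark_conn} is engineered so that precisely this identity holds; the Ricci-flat hypothesis then cleanly kills the remaining term, and a nonzero $\Ric$ is precisely the error that will appear in the later generalizations of the theorem to Einstein and general manifolds.
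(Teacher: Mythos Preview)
Your overall strategy---pull back to $P_0\dR^n$ via the It\^o map and invoke the Euclidean result---is morally the right picture, and the paper itself remarks that the Halfway Harnack can be viewed as the pushforward of the Euclidean quadratic form under the It\^o map. However, the specific pointwise identification you rely on is false: the Gateaux derivative of the stochastic development $\Psi$ in a Cameron--Martin direction $\varphi v$ is \emph{not} the parallel-translation vector field $\varphi V$. This is precisely the content of the intertwining formula \eqref{Bismut covariant derivative}--\eqref{Bismut covariant derivative 2}: differentiating $F$ on $P_xM$ along $Uv$ corresponds to differentiating $F\circ I$ on $P_0\dR^n$ along the tangent process $v^\ast$, which carries an antisymmetric martingale part $-\int_0^t\cR_s(\circ dW_s,v_s)\,dW_t$ built from the full Riemann tensor. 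There is no Cameron--Martin $v$ with $v^\ast=h$ for a generic $h\in\cH$, so your chain-rule identity $D_{\varphi v}G=D_{\varphi V}F\circ\Psi$ fails pointwise, and with it the subsequent Hessian identification.

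The identities you need do hold \emph{in expectation}, but establishing them requires integration by parts on both sides together with the commutator formula $D_V\delta(W)=\delta(\nabla_V W)+\tfrac12\langle V,W\rangle_\cH$ (Proposition~\ref{RicFlat commutation}); one then checks that $\delta(V^a)\circ I=\delta^E(\varphi e_a)$ because the divergences are given by the same It\^o integral against $dW_t$, and that the Markovian connection term $\sum_a D_{\nabla_{V^a}V^a}F$ traces to a Ricci contribution that vanishes. But carrying this out is exactly the paper's proof: the Halfway Harnack (Theorem~\ref{thmRicflat: quadratic form}) reduces $Q_F[V,V]$ to $\Ex[F\delta(V)^2]-\Ex[F\delta(V)]^2\geq 0$ by two integrations by parts and the commutator formula, and the Ricci-flat hypothesis is used only to kill $\sum_a\nabla_{V^a}V^a$ when tracing. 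So your route does not genuinely shortcut the argument; rather, once the gap is repaired, it reconstructs the same machinery.
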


Let us begin, as we often like to, by applying this to the simplest functions on path space in order to see that we can recover the classical Li-Yau Harnack inequality:\\

\begin{example}[Li-Yau inequality]\label{ex_intro_LiYau}
Let us consider the cylinder function $F:P_xM\to \dR^+$	 given by $F(\gamma) = f(\gamma(t))$, where $f:M\to \dR^+$ and $t>0$ are fixed. Let $\varphi:[0,\infty)\to \dR$ be such that $\varphi(s) = \frac{s}{t}$ for $s\leq t$ and $\varphi(s)=1$ for $s\geq t$.  One can use the definition of the $\varphi$-gradient to immediately compute
\begin{align}
&\nabla_\varphi F(\gamma) = P_t(\gamma)\nabla f(\gamma(t))\, ,
\end{align}
where $P_t(\gamma):T_{\gamma(t)}M\to T_x M$ denotes parallel transport.  Now let $e_i\in T_xM$ be an orthonormal basis with $E_i$ the associated parallel translation invariant vector fields along each $\gamma$. Using the definition of the Markovian connection (Definition \ref{d:intro:mark_conn}) we see that
\begin{equation}
\sum_{i=1}^n\nabla_{\varphi E_i}\varphi E_i = 0,
\end{equation}
where the curvature term disappeared after taking the trace since $\Ric=0$. It follows that
\begin{align}
\Delta_\varphi F(\gamma) = \Delta f(\gamma(t)).
\end{align}
Using the above and the Feynman-Kac formula we can then derive the equalities
\begin{align}
&\Ex[F] = \int_M f(y)\rho_t(x,dy) = f_t(x)\, ,\notag\\
&\Ex[\Delta_\varphi F] = \Delta f_t(x)\, ,\notag\\
&\Ex[\nabla_\varphi F]  = \nabla f_t(x)\, ,
\end{align}
where in the last equality we used again that $\Ric=0$. Finally, observing that $||\varphi||^2 = \frac{1}{t}$ and plugging all of this into \eqref{e:harnack_ricci_flat_intro} we obtain
\begin{align}
	\frac{\Delta f_t}{f_t} - \frac{|\nabla f_t|^2}{f_t^2} + \frac{n}{2t} \geq 0\, ,
\end{align}
which is precisely the Li-Yau Harnack inequality. $\qed$
\end{example}

Another consequence is a generalization of the Li-Yau estimate \eqref{e:li_yau_heat_kernel} on heat kernels:

\begin{example}[Laplacian of the log of the Wiener Measure]\label{ex_laplogW} By plugging in a smoothed Dirac delta function into Theorem \ref{thmintro:harnack_ricci_flat} we formally obtain the Laplace comparison estimate\footnote{Here, we view the energy functional $-\ln\PP_x	\equiv \frac{1}{4}\int |\dot\gamma|^2$ as the log of the Wiener measure, motivated by the integration by parts formula. 
Note that although $\ln\PP_x$ is not defined on continuous path space, its gradient is.}
\begin{equation}\label{e:laplace_comparison_Wiener}
\Delta_\varphi \ln \PP_x \geq -\frac{n}{2}\, ,
\end{equation}
for each $\varphi$ with $||\varphi||=1$. To interpret this, recall from \eqref{e:varphi_distribution} that for each $\varphi$ we have an associated $n$-dimensional distribution $E_\varphi$ on $TP_xM$, and that $\Delta_\varphi = \tr_{E_\varphi}\Hess$. Thus, the estimate \eqref{e:laplace_comparison_Wiener} is telling us that the trace of the Hessian of $\ln\PP_x$ is bounded below on each of the $n$-dimensional subspaces $E_\varphi$.  Hence, $\ln\PP_x$ behaves like a plurisubharmonic function on a complex manifold.  
\end{example}

\begin{remark}[Equality]
Computing more carefully one can check that actually equality is attained in the above example. Namely, the log of the Wiener measure satisfies the interesting identity
\begin{equation}
\Delta_\varphi \ln \PP_x =- \frac{n}{2}
\end{equation}
for each normalized $\varphi$. We emphasize that this only holds if $M$ is Ricci-flat.
\end{remark}

Alternatively, instead of in terms of the $H^1$-geometry, our differential Harnack inequality on path space of Ricci-flat manifolds can also be understood in terms of the $L^2$-geometry of path space. To this end, we denote by $\Hess_\varphi^\mathcal{L}$ and $\Delta_\varphi^\mathcal{L}$ the $\varphi$-Hessian and $\varphi$-Laplacian that are obtained by using the $L^2$-connection $\nabla^\mathcal{L}$ instead of the Markovian connection $\nabla$. Concretely, we have
\begin{align}
	\Hess_\varphi^\mathcal{L} F(v,v) = \frac{d^2}{ds^2}\Big|_{s=0}F(\gamma_s)\, ,
\end{align}
where $\gamma_s$ is a family of curves with $\partial_s|_{s=0}\gamma_s = \varphi V$ and $\nabla_{\varphi V}\big(\partial_s\gamma_s\big) = 0$,\footnote{For instance $\gamma_s(t) = \exp_{\gamma(t)}(s\varphi(t)V(t))$ gives such a curve.}
and
\begin{equation}
\Delta_\varphi^\mathcal{L} F = \tr\Hess_\varphi^\mathcal{L} F\,  .
\end{equation}

\begin{corollary}[Differential Harnack inequality in terms of $L^2$-geometry]\label{corintro:harnack_ricci_flat}
Let $M$ be a Ricci-flat manifold, and let $F:P_xM\to \dR$ be a nonnegative function. Then, for all $\varphi\in H^1_0(\mathbb{R}^+)$ we have the inequality
\begin{align}\label{e:harnack_ricci_flat_intro2}
 &\frac{\Ex\left[\Delta^\mathcal{L}_{\varphi}F\right]}{\Ex[F]}
 -\frac{\big|{\Ex\left[\nabla_\varphi F\right]}\big|^2}{\Ex[F]^2}
 +\frac{n}2\norm{\varphi}^2\geq 0.
\end{align}
\end{corollary}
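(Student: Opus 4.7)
The plan is to reduce the corollary directly to Theorem~\ref{thmintro:harnack_ricci_flat} by establishing the pointwise identity
\begin{equation*}
\Delta_\varphi F = \Delta^{\mathcal{L}}_\varphi F \qquad \text{on } P_xM
\end{equation*}
whenever $\Ric\equiv 0$. Note that $\nabla_\varphi F$ is defined purely as a directional derivative (Definition~\ref{d:phi_gradient}) and therefore makes no reference to a connection; it is the same object in the $H^1$- and the $L^2$-formulations. Hence once the two Laplacians are shown to agree, inequality \eqref{e:harnack_ricci_flat_intro2} becomes a literal rewriting of \eqref{e:harnack_ricci_flat_intro}.

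\textbf{Step 1: Compare the two Hessians.} For any vector field $V$ on $P_xM$, the definitions of $\Hess F$ via the Markovian connection $\nabla$ and of $\Hess^{\mathcal{L}} F$ via the $L^{2}$-connection $\nabla^{\mathcal{L}}$ agree on the common second directional derivative and differ only by the first order connection term:
\begin{equation*}
\Hess F(V,V) - \Hess^{\mathcal{L}} F(V,V) = D_{(\nabla^{\mathcal{L}}_V V \,-\, \nabla_V V)} F.
\end{equation*}
Applying this with $V=\varphi E_i$, where $\{E_i\}$ is the stochastic parallel translation of an orthonormal basis $\{e_i\}$ of $T_xM$, and summing gives
\begin{equation*}
\Delta_\varphi F - \Delta^{\mathcal{L}}_\varphi F = D_{\sum_i(\nabla^{\mathcal{L}}_{\varphi E_i}\varphi E_i - \nabla_{\varphi E_i}\varphi E_i)} F.
\end{equation*}

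\textbf{Step 2: The curvature correction vanishes on Ricci-flat manifolds.} From formula \eqref{e:d:intro:mark_conn} for the Markovian connection, the difference $\nabla-\nabla^{\mathcal{L}}$ is exactly the curvature-integral term appearing there. Specializing to $V = W = \varphi E_i$, so that $w_t = \varphi(t)e_i$ is constant in $\gamma$ (whence $D_V\dot w_t = 0$) and $\dot w_t = \dot\varphi(t)e_i$, the trivialized correction reads
\begin{equation*}
\frac{d}{dt}P_t\bigl(\nabla_{\varphi E_i}\varphi E_i - \nabla^{\mathcal{L}}_{\varphi E_i}\varphi E_i\bigr)_t = \dot\varphi(t)\left(\int_0^t \varphi(s)\, P_s\, \Rm_{\gamma_s}(E_i(s),\dot\gamma_s)\, P_s^{-1}\, ds\right) e_i.
\end{equation*}
Summing over $i$ produces the trace $\sum_i \Rm_{\gamma_s}(E_i(s),\dot\gamma_s)E_i(s) = \Ric_{\gamma_s}(\dot\gamma_s)^{\sharp}$ inside the integrand, which vanishes by the Ricci-flat hypothesis. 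Consequently $\sum_i\nabla_{\varphi E_i}\varphi E_i = \sum_i\nabla^{\mathcal{L}}_{\varphi E_i}\varphi E_i$ on $P_xM$, so $\Delta_\varphi F \equiv \Delta^{\mathcal{L}}_\varphi F$, and plugging into Theorem~\ref{thmintro:harnack_ricci_flat} yields the corollary.

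\textbf{Main obstacle.} The only technical subtlety is that \eqref{e:d:intro:mark_conn} must be interpreted as a Stratonovich integral along Brownian motion; converting to It\^o introduces a further contraction of $\Rm$ along the martingale bracket, which again collapses into a Ricci term and therefore also vanishes in our setting. This is exactly the same mechanism by which the curvature term dropped out when computing $\Delta f(\gamma(t))$ in Example~\ref{ex_intro_LiYau}, so no new ideas are needed beyond those already exhibited there.
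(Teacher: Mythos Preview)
Your overall strategy---show $\Delta_\varphi F=\Delta^{\mathcal{L}}_\varphi F$ pointwise on Ricci-flat $M$ and then quote Theorem~\ref{thmintro:harnack_ricci_flat}---is exactly the paper's (see Corollary~\ref{cor_hessian}). Step~1 is also correct. The issue is in Step~2: your identification of $\nabla-\nabla^{\mathcal{L}}$ is wrong.

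Formula \eqref{e:d:intro:mark_conn} records the Markovian connection $\nabla$; it says nothing about $\nabla^{\mathcal{L}}$. What you are tacitly assuming, namely that $\nabla^{\mathcal{L}}$ is given in trivialized form by $\tfrac{d}{dt}P_t(\nabla^{\mathcal{L}}_V W)_t=D_V\dot w_t$, is the \emph{Cartan} connection $\nabla^{\mathcal{C}}$ (Definition~\ref{d:intro:cartan_conn}), not the $L^2$-connection. As the paper spells out in Section~1.6, the $L^2$-connection has the form
\[
\tfrac{d}{dt}P_t(\nabla^{\mathcal{L}}_V W)_t = D_V\dot w_t + \Rm_{\gamma_t}(V_t,\dot\gamma_t)\,w_t + \Big(\int_0^t P_s\Rm_{\gamma_s}(V_s,\dot\gamma_s)\,ds\Big)\dot w_t,
\]
so the curvature-integral term is present in \emph{both} connections and cancels; the actual difference is the instantaneous term $\Rm_{\gamma_t}(V_t,\dot\gamma_t)w_t$. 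With $V=W=\varphi E_i$ this gives $\varphi_t^2\,\Rm_{\gamma_t}(E_i,\dot\gamma_t)E_i$, which after summing over $i$ is again a Ricci term and vanishes---so your conclusion survives, but for the wrong reason. The paper avoids this pitfall by computing $D_V(D_VF)$ on cylinder functions directly (Proposition~\ref{prop: mark hess}), obtaining the anticipating-integral correction $\int_0^\infty\langle\nabla^\parallel_t F,\mathcal{R}_t(\circ dW_t,h_t)h_t\rangle$, whose trace visibly collapses to Ricci. You should either correct the connection formula you use in Step~2, or follow the paper's direct route; either way the ``main obstacle'' paragraph about Stratonovich-to-It\^o correction remains valid and necessary.
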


In fact, we will show in Section \ref{sec_harnack_L2} that on path space of Ricci-flat manifolds, the $\varphi$-Laplacian induced by the $L^2$-connection agrees with the one induced by the Markovian connection.

\bigskip

\subsection{Differential Harnack Inequalities on Path Space of General Manifolds}

The situation for general manifolds is quite analogous to the previous section, though unsurprisingly we now get more error terms depending on the curvatures.
Our main differential Harnack inequality on the path space of general manifolds is the following:
 
\begin{theorem}[Differential Harnack inequality on path space]\label{theorem_harn_gen}
Let $F:P_xM\to\RR^+$ be a nonnegative $\Sigma_T$-measurable function on path space. Then, for every $\varphi\in H_0^1(\mathbb{R}^+)$ we have the inequality
 \begin{align}\label{e:diff_harn_general}
\frac{\Ex[\Delta_{\varphi} F]}{\Ex[F]}
-\frac{\abs{\Ex[\nabla_\varphi F]}^2}{\Ex[F]^2}
+\left( \frac{n}{2}+C_T(\Ric)
+C_T(\Rm,\nabla\Ric)\frac{\Ex[{ F}^2]^{1/2}}{\Ex[F]}  \right) 
\norm{\varphi}^2\geq 0,
\end{align}
where $C_T(\Ric)<\infty$ and $C_T(\Rm,\nabla\Ric)<\infty$ are constants, which converge to $0$ as $\abs{\Ric}+|\nabla\Ric|\to0$ assuming that $\abs{\Rm}$ and $T$ stay bounded.
\end{theorem}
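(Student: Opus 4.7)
My plan is to establish the estimate first for smooth cylinder functions $F(\gamma)=f(\gamma_{t_1},\ldots,\gamma_{t_k})$ and then extend to general $\Sigma_T$-measurable $F\ge 0$ by an $L^2$-type density argument, under which all four functionals $\Ex[F]$, $\Ex[\nabla_\varphi F]$, $\Ex[\Delta_\varphi F]$, $\Ex[F^2]^{1/2}$ are stable. For a cylinder $F$, I will run the same scheme used for the Ricci-flat version (Theorem~\ref{thmintro:harnack_ricci_flat}), but carefully track the curvature corrections that now arise. Concretely, I fix an orthonormal basis $\{e_i\}_{i=1}^{n}$ of $T_xM$, form the Cameron-Martin family $h_r=\sum_i r_i\varphi e_i\in\mathcal{H}$ indexed by $r\in\mathbb{R}^n$, and set
\begin{equation*}
\Psi(r):=\ln\Ex[F(\gamma^{r})],
\end{equation*}
where $\gamma^r$ is obtained from $\gamma$ by shifting the $T_xM$-valued anti-development Brownian motion $B$ of $\gamma$ by $h_r$ and redeveloping. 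Cameron-Martin-Girsanov rewrites this as $\Psi(r)=\ln\Ex\bigl[F\exp(\int\dot h_r\cdot dB-\tfrac{1}{2}|r|^2\|\varphi\|^2)\bigr]$, which I can differentiate twice in $r$ at $r=0$.

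Using Definition~\ref{d:intro:mark_conn} of the Markovian connection, a direct but tedious computation of $\nabla_r^2\Psi(0)$ yields an identity of the form
\begin{equation*}
\mathrm{tr}\,\nabla_r^2\Psi(0)=\frac{\Ex[\Delta_\varphi F]}{\Ex[F]}-\frac{|\Ex[\nabla_\varphi F]|^2}{\Ex[F]^2}+\mathrm{tr}\,\mathcal{E}_\varphi[F],
\end{equation*}
where the error tensor $\mathcal{E}_\varphi[F]$ is produced by the stochastic curvature integral in \eqref{e:d:intro:mark_conn} crossing with the Girsanov density; it vanishes identically when $\Ric\equiv 0$ and $\nabla\Ric\equiv 0$. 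Simultaneously, the anti-development induces a measure-preserving bijection $I:(P_xM,\PP_x)\to(P_0\mathbb{R}^n,\PP_0)$ which intertwines the two families of Cameron-Martin shifts, so applying the Euclidean Matrix Harnack inequality (Theorem~\ref{thm_harnack_path_space}) to the pullback $F\circ I^{-1}$ gives $\mathrm{tr}\,\nabla_r^2\Psi(0)\ge -\tfrac{n}{2}\|\varphi\|^2$. Combining the two displays produces the desired inequality up to the remainder $\Ex[\mathrm{tr}\,\mathcal{E}_\varphi[F]]/\Ex[F]$.

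The remainder is then split into two pieces. The part of $\mathcal{E}_\varphi[F]$ that is pointwise proportional to $\Ric$ (and carries no derivative of $F$) averages against $F$ to a bound of size $C_T(\Ric)\|\varphi\|^2\Ex[F]$, producing the first curvature summand. The remaining part appears as the product of $F$ with a stochastic integral whose integrand contains $\Rm$ and $\nabla\Ric$ contracted with $\varphi V$; Cauchy-Schwarz combined with the It\^o isometry then gives
\begin{equation*}
\bigl|\Ex[F\cdot(\text{stochastic integral})]\bigr|\le C_T(\Rm,\nabla\Ric)\,\|\varphi\|^2\,\Ex[F^2]^{1/2},
\end{equation*}
which after division by $\Ex[F]$ yields the asymmetric summand $C_T(\Rm,\nabla\Ric)\|\varphi\|^2\Ex[F^2]^{1/2}/\Ex[F]$. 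In both cases the constants tend to $0$ as $|\Ric|+|\nabla\Ric|\to 0$ with $|\Rm|$ and $T$ bounded, since the integrands vanish pointwise in that limit and the stochastic integrals are controlled in $L^2$ by $T$ and $|\Rm|$.

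The principal technical difficulty lies in the second-derivative computation of $\Psi(r)$: the Markovian connection \eqref{e:d:intro:mark_conn} contains the stochastic curvature integral $\int_0^tP_s\Rm_{\gamma_s}(V_s,\dot\gamma_s)\,ds$, and differentiating this integral twice against the Girsanov shift brings $\Rm$, $\Ric$ and $\nabla\Ric$ simultaneously into play. The delicate bookkeeping is to isolate the pure $\Ric$-type contribution so that it produces the \emph{symmetric} error $C_T(\Ric)\|\varphi\|^2$ with no $F^2$ factor, while recasting all remaining $\Rm$ and $\nabla\Ric$ contributions as ``stochastic integral $\times F$'' so that Cauchy-Schwarz produces the \emph{asymmetric} factor $\Ex[F^2]^{1/2}/\Ex[F]$ carrying only $|\Rm|$ and $|\nabla\Ric|$ dependence.
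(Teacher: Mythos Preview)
Your outline follows essentially the same route as the paper: the paper's Halfway Harnack (Theorem~\ref{thm: quadratic form}) \emph{is} the Euclidean convexity of $\Phi_{F\circ I}$ pushed through the It\^o map, rewritten via integration by parts and the commutator formula as $Q_F[V,V]=\Ex[F\delta(V)^2]-\Ex[F\delta(V)]^2\ge 0$.  The difference is that the paper organizes the curvature bookkeeping through two explicit devices---the hat-map $v\mapsto\hat v$ (Definition~\ref{hat-map}) and the modified Markovian connection $\nabla^{\models}$ (Definition~\ref{def:modifiedconnection})---which make the splitting of the error into a pointwise $\Ric$-part (from $\|V\|^2_{\widehat\cH}-\|V\|^2_{\cH}$) and a divergence part (from $\delta(\nabla^{}_VV)$ and $\delta(\nabla^{\models}_VV)$) completely transparent.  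Your unspecified tensor $\mathcal{E}_\varphi[F]$ hides exactly this structure; without the hat-map one does not see \emph{a priori} why the first piece is bounded by $C(\Ric)\Ex[F]$ rather than by $C(\Ric)\Ex[F^2]^{1/2}$.  Also, your phrase ``intertwines the two families of Cameron--Martin shifts'' is misleading: the It\^o map is measure-preserving but does \emph{not} intertwine shifts---its failure to do so is the intertwining formula \eqref{Bismut covariant derivative}--\eqref{Bismut covariant derivative 2}, and that failure is precisely the source of all the curvature errors you then have to estimate.  (Your conclusion $\mathrm{tr}\,\nabla_r^2\Psi(0)\ge -\tfrac{n}{2}\|\varphi\|^2$ is nonetheless correct, since it only needs convexity of $\Phi_{F\circ I}$, not intertwining.)

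The genuine gap is in the sharp vanishing behavior of $C_T(\Rm,\nabla\Ric)$.  You assert that the stochastic-integral integrands ``vanish pointwise'' as $|\Ric|+|\nabla\Ric|\to 0$ with $|\Rm|$ bounded, but this is only true \emph{after} tracing over the orthonormal frame: before tracing the integrand is $\cR_s(\circ dW_s,v_s)\dot v_t$, which involves the full $\Rm$ and does not vanish.  The key observation (the Claim in the paper's proof of Theorem~\ref{theorem_harn_gen}) is that $\sum_a\cR_s(\circ dW_s,\varphi_se_a)\dot\varphi_te_a=\varphi_s\dot\varphi_t\,\Ric_s\circ dW_s$, so tracing converts the $\Rm$-integrand into a $\Ric$-integrand; and similarly every surviving $\Rm$ in the $\widehat V$-terms (via Lemma~\ref{lemma: derivative}) appears multiplied by $\Ric$, hence still vanishes in the limit.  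This ``trace before estimating'' step is what distinguishes Theorem~\ref{theorem_harn_gen} from simply tracing the Matrix Harnack (Theorem~\ref{thmintro: general}), and your proposal does not identify it.
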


Theorem \ref{theorem_harn_gen} generalizes Theorem \ref{thmintro:harnack_ricci_flat} to the path space of general manifolds. Again, it provides an infinite dimensional family of finite dimensional differential Harnack inequalities on path space $P_xM$.
There are a couple points about the error terms worth observing.  They depend on the $L^2$-norm of $F$. In general, they further depend on bounds on the full curvature tensor $|\Rm|$ and on $|\nabla\Ric|$.  This seems to be a feature of second order estimates on path space, in contrast to the first order estimates of \cite{Naber_char,haslhofernaberricci}, where the errors only depend on the Ricci curvature, and nothing involving the full curvature or the covariant derivative of curvature. However, if the  underlying manifold is Einstein, then as a corollary of our proof we obtain:  
 
 \begin{corollary}\label{cor_matr_harn}
If $M$ is Einstein, i.e. $\Ric=\Lambda g$, then the constants only depend on $\Lambda$, namely
 \begin{align}
\frac{\Ex[\Delta_{\varphi} F]}{\Ex[F]}
-\frac{\abs{\Ex[\nabla_\varphi F]}^2}{\Ex[F]^2}
+\left( \frac{n}{2}+C_T(\Lambda)\left(1+\frac{\Ex[{ F}^2]^{1/2} }{\Ex[F]} \right) \right) 
\norm{\varphi}^2\geq 0,
\end{align}
where $C_T(\Lambda)\to 0$ as $\Lambda\to0$ assuming that $T$ stays bounded.
\end{corollary}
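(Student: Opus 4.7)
The plan is to derive Corollary \ref{cor_matr_harn} by inspecting the proof of Theorem \ref{theorem_harn_gen} under the Einstein hypothesis $\Ric=\Lambda g$. Such a manifold satisfies the parallel identity $\nabla \Ric = 0$ and the pointwise normalization $|\Ric|^2 = n \Lambda^2$, while the scalar curvature $R = n\Lambda$ is constant. The strategy is simply to trace where $|\Ric|$ and $|\nabla\Ric|$ enter the error terms of Theorem \ref{theorem_harn_gen}, then substitute these Einstein identities in.

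The key structural observation concerns the constant $C_T(\Rm,\nabla\Ric)$. The convergence statement in Theorem \ref{theorem_harn_gen} says $C_T(\Rm,\nabla\Ric)\to 0$ as $|\nabla\Ric|\to 0$ with $|\Rm|$ bounded, which reveals that this constant factors as a coefficient depending on $|\Rm|$ and $T$ times an overall factor of $|\nabla\Ric|$. This factorization is the natural outcome of the Bochner-type computation behind Theorem \ref{theorem_harn_gen}: the full Riemann tensor $\Rm$ enters the second-order estimates on path space only through commutators of covariant derivatives acting on Ricci-weighted parallel transport, producing either pure Ricci contractions or products of the schematic form $\Rm\cdot\nabla\Ric$. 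One has to go back through the proof of Theorem \ref{theorem_harn_gen} and check that at no point does a term involving $|\Rm|$ appear in isolation in the error bound. Once this is verified, substituting $\nabla\Ric=0$ in the Einstein setting annihilates the entire $\Rm$-dependent contribution.

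What remains is the $C_T(\Ric)$ term, which depends only on pointwise data of $\Ric$. For $\Ric=\Lambda g$ this reduces to a constant $C_T(\Lambda)$ depending only on $\Lambda$ and $T$. Examining where $\Ric$ enters the estimate in Theorem \ref{theorem_harn_gen} shows that its contribution splits, via Cauchy--Schwarz, into a piece that scales with $\Ex[F]$ and one that scales with $\Ex[F^2]^{1/2}$, accounting for the $(1 + \Ex[F^2]^{1/2}/\Ex[F])$ structure in the statement. The limit $C_T(\Lambda) \to 0$ as $\Lambda \to 0$ with $T$ bounded follows directly from the corresponding limit $C_T(\Ric)\to 0$ as $|\Ric|\to 0$.

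The main obstacle is the structural claim in the second step: confirming at the algebraic level that in the proof of Theorem \ref{theorem_harn_gen}, every appearance of the full Riemann tensor comes multiplied by $\nabla\Ric$. Because commutators of covariant derivatives naturally produce free $\Rm$-factors, the cancellation of such free factors against each other --- typically via the contracted second Bianchi identity $\nabla_i \Ric_{jk} - \nabla_j \Ric_{ik} = \nabla^l \Rm_{lijk}$ --- is the delicate bookkeeping step. Once this is carried out, the corollary is immediate from Theorem \ref{theorem_harn_gen} together with the Einstein identities $\nabla\Ric=0$ and $|\Ric|^2 = n\Lambda^2$.
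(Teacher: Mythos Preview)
Your argument rests on a misreading of Theorem \ref{theorem_harn_gen}. The convergence hypothesis there is that $C_T(\Rm,\nabla\Ric)\to 0$ as $|\Ric|+|\nabla\Ric|\to 0$ with $|\Rm|$ bounded, not merely as $|\nabla\Ric|\to 0$. Hence there is no factorization of the form $C_T(\Rm,\nabla\Ric)=C(|\Rm|,T)\cdot|\nabla\Ric|$, and the Einstein identity $\nabla\Ric=0$ does \emph{not} annihilate this term. Indeed, the very form of the corollary should have flagged this: the factor $\Ex[F^2]^{1/2}/\Ex[F]$ appearing there is, in the general theorem, multiplied only by $C_T(\Rm,\nabla\Ric)$, so if that constant vanished the corollary would have no such term at all.

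The actual mechanism, visible in the proof of Theorem \ref{theorem_harn_gen}, is quite different and more elementary. The error terms multiplying $\Ex[F^2]^{1/2}$ are governed by the two traced quantities $\sum_a\nabla_{V_a}V_a$ and $\sum_a\nabla_{V_a}\widehat{V_a}$. For the first, the trace identity $\sum_a\cR_s(\,\cdot\,,e_a)e_a=\Ric_s$ already reduces the full curvature to Ricci, and the Stratonovich correction is $\nabla R=0$. For the second, the same trace identity handles the $\cR$-piece, while the remaining contribution $D_{V_a}\dot{\hat v}^a_t$ vanishes by the Remark after Lemma \ref{lemma: derivative}: in the Einstein case one has $\nabla\Ric=0$ and, since $\Ric_t=\Lambda\cdot\mathrm{id}$ commutes with everything, the two bracket terms $\Ric_t(\int_0^t\cR_s\cdots)-(\int_0^t\cR_s\cdots)\Ric_t$ cancel. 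So every occurrence of $\Rm$ either traces down to $\Ric=\Lambda g$ or disappears by scalar commutation, and the surviving error constants depend only on $\Lambda$ and $T$. No Bianchi-identity bookkeeping of the sort you describe is needed.
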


\begin{remark}
We saw in the Ricci-flat case that $\Delta_\varphi$ may be replaced by $\Delta_\varphi^\mathcal{L}$.  However, this is absolutely not the case in general, even if $M$ is Einstein.  The difference between the Markovian and $L^2$ quantities involves terms that are fundamentally not controllable in the form of \eqref{e:diff_harn_general}.
\end{remark}

\bigskip

\subsection{Differential Matrix Harnack Inequalities on Path Space}

Finally, we discuss our differential Matrix Harnack inequality on path space, meant to generalize Hamilton's Matrix Harnack Inequality \eqref{e:matrix_harnack}:

\begin{theorem}[Differential Matrix Harnack inequality on path space]\label{thmintro: general}
Let $F:P_xM\to\RR^+$ be a nonnegative $\Sigma_T$-measurable function on path space. 
Then, for every $\varphi\in H_0^1(\mathbb{R}^+)$ we have the inequality
 \begin{multline}
\frac{\Ex[\Hess_{\varphi} F]}{\Ex[F]}
-\frac{\Ex[\nabla_\varphi F]\otimes \Ex[\nabla_\varphi F] }{\Ex[F]^2}\\
+ \left(\frac{1}{2}
+C_T(\Ric)
+C_T(\Rm,\nabla\Ric)\frac{\Ex[ {F}^2]^{1/2} }{\Ex[F]}\right) \norm{\varphi}^2g_x\geq 0,
\end{multline}
where $C_T(\Ric)<\infty$ and $C_T(\Rm,\nabla\Ric)<\infty$ are constants, which converge to $0$ as $\abs{\Rm}+|\nabla\Ric|\to0$ assuming that $T$ stays bounded.
\end{theorem}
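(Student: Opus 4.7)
The plan is to follow the Euclidean template of Theorem \ref{thm_harnack_path_space} and adapt it to the manifold setting while retaining the full tensorial structure rather than tracing. Concretely, for each $v \in T_xM$ we wish to show that the manifold analog of the convex functional $\Phi_F$, namely a quantity of the form $\Psi(sv) = \ln \Ex[F(\theta_s^{\varphi v}\gamma)] + \tfrac{1}{4} s^2 |v|^2 \|\varphi\|^2$ plus curvature corrections, is convex at $s = 0$. Here $\theta_s^{\varphi v}$ denotes a perturbation of the path $\gamma$ in the Cameron--Martin direction $\varphi V$, where $V_t$ is the stochastic parallel transport of $v$. The matrix Harnack is then obtained by polarizing in $v$, and its trace over an orthonormal basis of $T_xM$ reproduces the scalar Harnack of Theorem \ref{theorem_harn_gen}.

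The key calculation uses the Bismut--Driver integration-by-parts formula, or equivalently the Cameron--Martin quasi-invariance of the Wiener measure on $P_xM$, to represent
\begin{equation*}
\Ex[F(\theta_s^{\varphi v}\gamma)] = \Ex\big[F(\gamma)\exp\bigl(sM_1(v) + \tfrac{s^2}{2}M_2(v,v) + o(s^2)\bigr)\big],
\end{equation*}
where $M_1(v)$ is a Stratonovich integral of $\dot\varphi$ against the antidevelopment of $\gamma$ and $M_2(v,v)$ collects the quadratic terms together with curvature corrections that appear because, unlike in flat space, the stochastic parallel transport along the perturbed path depends non-trivially on $s$. Differentiating twice at $s = 0$ and applying Hölder's inequality exactly as in the convexity proof of Theorem \ref{thm_harnack_path_space} produces the leading tensor
\begin{equation*}
\frac{\Ex[\Hess_\varphi F]}{\Ex[F]} - \frac{\Ex[\nabla_\varphi F]\otimes \Ex[\nabla_\varphi F]}{\Ex[F]^2}
\end{equation*}
plus the Euclidean-type contribution $\tfrac{1}{2}\|\varphi\|^2 g_x$ plus curvature error tensors. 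The Markovian connection's curvature term in Definition \ref{d:intro:mark_conn} is designed precisely so that at first order the parallel-transport perturbation and the Hessian definition produce matching Ricci contributions, reducing the residual first-order error to $C_T(\Ric)\,\|\varphi\|^2 g_x$, which vanishes in the Ricci-flat setting and thereby recovers Theorem \ref{thmintro:harnack_ricci_flat}.

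The main obstacle is the second-order error tensor. The second variation of stochastic parallel transport along the perturbation produces terms quadratic in $\Rm$ and linear in $\nabla\Ric$ which are path-dependent and cannot be absorbed into a multiple of $\Ex[F]$ itself. The remedy is to separate these errors using Cauchy--Schwarz: an expression of the form $\Ex[F \cdot \mathcal{E}]$, with $\mathcal{E}$ an $L^2$-bounded curvature functional, is estimated by $\|\mathcal{E}\|_{L^2(\PP_x)}\,\Ex[F^2]^{1/2}$, which accounts for the $\Ex[F^2]^{1/2}/\Ex[F]$ prefactor multiplying $C_T(\Rm,\nabla\Ric)$ in the statement. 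In the Einstein case $\nabla\Ric$ vanishes and one can estimate these terms by $C_T(\Lambda)$ alone, yielding the improved form of Corollary \ref{cor_matr_harn}. Taking expectations and assembling the first- and second-order errors then produces the tensorial inequality claimed.
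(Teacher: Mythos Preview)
Your outline captures the overall architecture—positivity from a Cauchy--Schwarz/Jensen step, a Ricci correction of size $C_T(\Ric)$, and a second-order curvature error separated off via Cauchy--Schwarz to produce the $\Ex[F^2]^{1/2}/\Ex[F]$ prefactor—and this matches the paper. However, the central step, ``applying H\"older's inequality exactly as in the convexity proof of Theorem~\ref{thm_harnack_path_space},'' does not transplant to manifolds as you describe. The Euclidean argument hinges on the exact Cameron--Martin identity $\Ex[F(\gamma+h)]=\Ex[F\,e^{\frac{1}{2}\langle h,\gamma\rangle-\frac{1}{4}\|h\|^2}]$, whose exponent is \emph{linear} in $h$; H\"older then gives global convexity for free. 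On a curved manifold the Driver quasi-invariance exponential depends on the flow $\theta_s^{\varphi v}$ itself through the rotated Brownian motion and curvature, so its exponent is not linear in $s$ and H\"older no longer yields the inequality directly. Your expansion $\Ex[F\exp(sM_1+\tfrac{s^2}{2}M_2+o(s^2))]$ is formal: identifying $M_2$ rigorously and proving the second variation is bounded below \emph{is} the entire problem, not a preliminary to it.

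The paper therefore works infinitesimally from the start. It applies integration by parts twice to $\Ex[D_V(D_VF)]$ and invokes the commutator formula $D_V\delta(V)=\delta(\nabla^{\models}_V V)+\tfrac{1}{2}\|V\|^2_{\widehat{\cH}}$ (Proposition~\ref{commutation}), where $\nabla^{\models}$ is the \emph{modified} Markovian connection built from the hat-map $\dot{\hat v}_t=\dot v_t+\Ric_t v_t$ (Definitions~\ref{hat-map} and~\ref{def:modifiedconnection}). This converts the Halfway Harnack quadratic form $Q_F[V,V]$ exactly into $\Ex[F\delta(V)^2]-\Ex[F\delta(V)]^2\geq 0$, which is the honest Cauchy--Schwarz step. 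One then rewrites $D_V(D_VF)=\Hess F(V,V)+D_{\nabla_V V}F$ and $\|V\|^2_{\widehat\cH}=\|V\|^2_{\cH}+\text{(Ricci correction)}$; the remaining errors $\Ex[D_{\nabla_V V}F]$ and $\Ex[D_{\nabla^{\models}_V V}F]$ are bounded via integration by parts, Cauchy--Schwarz, and It\^o isometry by $C_T(\Rm,\nabla\Ric)\Ex[F^2]^{1/2}$. Your sketch omits both the commutator formula and the modified connection/hat-map, and your phrase ``the Markovian connection's curvature term is designed so that first-order contributions match'' is not a substitute for them. A minor point: Corollary~\ref{cor_matr_harn} concerns the \emph{traced} Harnack, where summing over an orthonormal frame collapses the $\Rm$-dependence to $\Ric$; for the Matrix Harnack itself the error still depends on the full curvature tensor even in the Einstein case.
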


In the path space context one only gets a full errorless estimate in the flat case.  That is, similar to Hamilton's Matrix Harnack inequality, which assumes $\sec\geq 0$ and $\nabla\Ric=0$, even Ricci-flatness is not enough to obtain Hessian estimates without error terms.  This should not be surprising, as the full Hessian estimates inevitably involve estimates on parallel translation maps, which involve the full curvature tensor.  Compared to the manifold case, Theorem \ref{thmintro: general} again contains completely new global information capturing the interaction between different points.\\

\subsection{Other Generalizations}

The differential Harnack inequalities of the previous sections were in terms of the $\varphi$-Hessian and $\varphi$-Laplacian, which themselves depended on a choice of connection on $P_xM$.  Our chosen connection on $P_xM$, namely the Markovian connection $\nabla$, is the one that is popular in the literature, however the differential Harnack inequalities do in fact hold for a wide class of connections on $P_xM$.  The Markovian condition \eqref{e:d:intro:mark_conn} can be generalized to the condition
\begin{align}\label{e:generalizations_connections:1}
\frac{d}{dt}P_t(\nabla^\mathcal{A}_V W)_t = D_V\dot{w}_t+\mathcal{A}_t(\gamma,V)\, \dot{w}_t\, ,
\end{align}
where $\mathcal{A}_t(\gamma,V):T_xM\to T_xM$.  Then so long as for each bounded $V$ we have that $\mathcal{A}_t$ is an adapted process which is also an $L^2$ antisymmetric mapping, then the induced connection is an $H^1$-connection which preserves adapted vector fields for which the Harnack inequalties of this paper hold.  Let us consider two important scenarios, beginning with the Cartan connection on $P_xM$:\\

\begin{definition}[Cartan Connection]\label{d:intro:cartan_conn}
The Cartan connection $\nabla^\mathcal{C}$ on $P_xM$ is the unique connection such that vector fields of the form $\varphi V$ are parallel, where $\varphi$ is an $H^1_0$-function and  $V$ is the vector field on $P_xM$ obtained by parallel translating a fixed $v\in T_xM$ along each $\gamma$.  
\end{definition}

The Cartan connection satisfies $\frac{d}{dt}P_t(\nabla^\mathcal{C}_V W)_t = D_V\dot{w}_t$, and thus \eqref{e:generalizations_connections:1} holds with $\cA\equiv 0$.
The Cartan connection $\nabla^\mathcal{C}$ is a flat connection on $P_xM$ which is not torsion free, indeed its torsion is now related to the curvature of $M$ itself.  In particular, one can prove the {\it verbatim} differential Harnack inequalities stated in this paper hold with the Hessian and Laplacian induced by this connection as well.\\

Finally, let us consider a non-example.  Another interesting choice of connection on $P_xM$ is the $L^2$-connection $\nabla^\mathcal{L}$.  Indeed, on Ricci-flat spaces the $\varphi$-Laplacians induced by the Markovian connection, the Cartan connection, and $L^2$-connection are all the same.  However, the $L^2$-connection in the form of \eqref{e:generalizations_connections:1} looks like $\frac{d}{dt}P_t(\nabla^\mathcal{L}_V W)_t = D_V\dot{w}_t + \Rm_{\gamma_t}(V_s,\dot\gamma_t) w_t+ \left(\int_0^tP_{s}\Rm_{\gamma_s}(V_s,\dot{\gamma}_s)\, ds\right) \, \dot{w}_t\,$.  The additional curvature term $\Rm_{\gamma_t}(V_s,\dot\gamma_t)$ is clearly not an $L^2$ function on $P_xM$.  The effect of this is that in non-Ricci flat case (or indeed for the Matrix Harnack even in the Ricci-flat case) the differential Harnack inequalities of this paper do not hold.  One obtains new errors (see the anticipating integral in Proposition \ref{prop: mark hess}) which fundamentally cannot be controlled in the same fashion.\\

\bigskip

\subsection{Outline of the Paper}

Let us briefly outline the paper along with the main steps of the proof.

In Section \ref{sec_euclidean}, we give the proof of our Harnack estimate Theorem \ref{thm_harnack_path_space} on the path space $P_0\dR^n$ of Euclidean space.  The proof in this context comes down to nothing more than a computation involving the Cameron-Martin change of variables formula and H\"older's inequality.  Regardless, this simple setting allows for a good starting point for developing intuition.

In Section \ref{sec_prelim}, we discuss the required preliminaries regarding stochastic analysis on manifolds.  After recalling the Wiener measure and the stochastic parallel translation map, we will spend some time discussing the different notions of gradients which appear in this paper.  These notions, and in particular the gradients of vector fields, can give rise to some subtle points on the path space analysis.  This is in part because there are several different such notions, each meant to capture different behaviors.  Finally, at the end of Section \ref{sec_prelim} we will discuss the intertwining and integration by parts formula.  We will state and prove the integration by parts formula for continuous adapted processes, which is a somewhat more general form than the most popular one.  This form of the integration by parts formula will be needed in future steps. 

In Section \ref{sec: ricflat}, we will give the proofs of our Harnack results in the Ricci-flat context.  The proofs in the Ricci-flat case will be very similar to the general case of Section \ref{sec: general}, however we can avoid many technicalities which can otherwise bog one down.  The first main result in Section \ref{sec: ricflat} is the Halfway Harnack of Theorem \ref{thmRicflat: quadratic form} which shows that the quadratic form
\begin{align}
Q_F[V,V]:=\frac{\Ex[D_{V}(D_{V} F)]}{\Ex[F]}-\frac{\Ex[D_{V} F]^2}{\Ex[F]^2}+\frac{\Ex[D_{\nabla^{}_{V}V}F]}{\Ex[F]}+\frac12\norm{V}_\cH^2\geq 0,
\end{align}
is nonnegative for all adapted vector fields $V$ on path space $P_xM$. Here, $\nabla$ is the Markovian connection from Cruzeiro-Malliavin \cite{cruzeiromalliavin}, see Definition \ref{d:intro:mark_conn}.  One can view this Halfway Harnack as a nongeometric version of the Harnack inequality, as fundamentally one can view it as the pushforward of our Harnack on Eulidean path space under the Ito map.  This Halfway Harnack of a function $F$ itself is then only half the picture, as we need to remove the non-tensorial terms, as well as estimate a variety of a-priori arbitrary looking curvature terms hidden inside the definition of the Markovian connection.   When combined with the correct tracing formulas this will allow us to turn the Halfway Harnack into the full differential Harnack.

In Section \ref{sec: general}, we end by generalizing the differential Harnack to the path space of arbitrary manifolds. Indeed, this is very similar spirit to the Ricci-flat context, however everything is a good deal more technical.  In particular, we will see it is important to use a twisted notion of gradient, which will interact better with the methods of this paper in the non Ricci-flat context.

\bigskip

\section{The Euclidean Case}\label{sec_euclidean}

In this short section, as warmup for the later sections, we prove our differential Harnack inequalities in the simple setting of path space of $\mathbb{R}^n$. We start by establishing convexity of the functional $\Phi_F$ from
\eqref{eq_def_phi}.

\begin{proof}[{Proof of Theorem \ref{thm_harnack_path_space}}]
Let $h_1,h_2\in\mathcal{H}$ and $\lambda_1,\lambda_2\in(0,1)$ with $\lambda_1+\lambda_2=1$. We have to show that
\begin{equation}
\Phi_F(\lambda_1 h_1+\lambda_2 h_2)\leq \lambda_1 \Phi_F(h_1)+ \lambda_2 \Phi_F(h_2).
\end{equation}
To this end, note that by the Cameron-Martin theorem \cite{CameronMartin} we have the change of variables formula
\begin{equation}\label{eq_cam_mart}
\int_{P_0\mathbb{R}^n} F(\gamma+h) \, d\mathbb{P}_0(\gamma)=\int_{P_0\mathbb{R}^n} F(\gamma)e^{\frac{1}{2}\ip{ h,\gamma}-\frac{1}{4}\norm{h}^2} \, d\mathbb{P}_0(\gamma),
\end{equation}
where $\ip{ h,\gamma} = \int_0^\infty \dot{h}_t\,dW_t(\gamma)$ is given as Ito integral of the process $\dot{h}_t$ with respect to Brownian motion. Using this, a short computation yields
\begin{align}
\Phi_F(\lambda_1 h_1+\lambda_2 h_2)&=\ln\left(\int_{P_0\mathbb{R}^n} F(\gamma)e^{\frac{1}{2}\ip{ \lambda_1 h_1+ \lambda_2 h_2,\gamma} } d\mathbb{P}_0(\gamma)\right) \nonumber\\
&\leq \lambda_1 \ln\left(\int_{P_0\mathbb{R}^n} F(\gamma)e^{\frac{1}{2}\ip{ h_1,\gamma} } d\mathbb{P}_0(\gamma)\right) + \lambda_2 \ln\left(\int_{P_0\mathbb{R}^n} F(\gamma)e^{\frac{1}{2}\ip{ h_2,\gamma} } d\mathbb{P}_0(\gamma)\right)\nonumber \\
&= \lambda_1 \Phi_F(h_1)+ \lambda_2 \Phi_F(h_2),
\end{align}
where we used the change of variables formula \eqref{eq_cam_mart} in the first and third line, and H\"older's inequality in the second line. This proves the theorem.
\end{proof}

Considering the most simple functions and variations on path space, Theorem \ref{thm_harnack_path_space} implies Hamilton's Matrix Harnack inequality \eqref{e:matrix_harnack} as explained in Example \ref{example_intro}. More generally, we obtain the following corollary.

\begin{corollary}\label{cor_convexity}
If $k$ is a positive integer, $f:\mathbb{R}^{n\times k}\to\mathbb{R}_+$ is a positive function (say of subexponential growth), and $0< t_1< \ldots <t_k$, then the associated function $\psi^f_{t_1,\ldots,t_k}:\mathbb{R}^{n\times k}\to\mathbb{R}$,
\begin{multline}\label{eq_conv_k}
(x_1,\ldots,x_k)\mapsto \ln\left(\int_{\mathbb{R}^{n\times k}} f(y_1+x_1,\ldots,y_k+x_k) \rho_{t_1}(0,dy_1)\rho_{t_2-t_1}(y_1,dy_2)\ldots \rho_{t_k-t_{k-1}}(y_{k-1},dy_k)  \right) \\
+ \frac{|x_1|^2}{4t_1}+ \frac{|x_2-x_1|^2}{4(t_2-t_1)}+ \ldots +  \frac{|x_k-x_{k-1}|^2}{4(t_k-t_{k-1})}
\end{multline}
is convex. In particular, for $k=1$ we see that the function
\begin{equation}
x\mapsto \ln\left(\int_{\mathbb{R}^{n}} f(z) \frac{e^{-\frac{\abs{x-z}^2}{4t}}}{(4\pi t)^{n/2}}dz \right)+ \frac{|x|^2}{4t}
\end{equation}
is convex, which by computing the Hessian reduces to Hamilton's Matrix Harnack inequality  \eqref{e:matrix_harnack}.
\end{corollary}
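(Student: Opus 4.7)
The plan is to deduce the corollary by specializing Theorem \ref{thm_harnack_path_space} to the cylinder function $F(\gamma) = f(\gamma(t_1),\ldots,\gamma(t_k))$ and to a piecewise linear path $h$ determined by the parameters $(x_1,\ldots,x_k)$. Concretely, setting $t_0 = 0$ and $x_0 = 0$, I would let $h = h^{x_1,\ldots,x_k}$ be the unique path in $\mathcal{H}$ which is linear on each interval $[t_{i-1},t_i]$ with $h(t_i) = x_i$ and constant equal to $x_k$ on $[t_k,\infty)$. The key observation is that the map $L:\mathbb{R}^{n\times k}\to\mathcal{H}$, $(x_1,\ldots,x_k)\mapsto h^{x_1,\ldots,x_k}$, is $\mathbb{R}$-linear.

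Next I would identify the two pieces of $\psi^f_{t_1,\ldots,t_k}$ with the two pieces of $\Phi_F \circ L$. For the quadratic piece, a direct calculation of the Cameron–Martin norm of the piecewise linear path gives
\begin{equation}
\tfrac{1}{4}\|h^{x_1,\ldots,x_k}\|^2_{\mathcal{H}}=\tfrac{1}{4}\sum_{i=1}^{k}\frac{|x_i-x_{i-1}|^2}{t_i-t_{i-1}},
\end{equation}
which matches the quadratic part of $\psi^f$. For the logarithmic piece, using the pushforward formula \eqref{e:Wiener_measure_pushforwards} for the Wiener measure under the evaluation map $e_{{\bf t}}$, together with translation invariance of the increments, one gets
\begin{equation}
\int_{P_0\mathbb{R}^n} F(\gamma + h^{x_1,\ldots,x_k})\, d\mathbb{P}_0(\gamma)=\int_{\mathbb{R}^{nk}} f(y_1+x_1,\ldots,y_k+x_k)\,\rho_{t_1}(0,dy_1)\cdots\rho_{t_k-t_{k-1}}(y_{k-1},dy_k).
\end{equation}
Thus $\psi^f_{t_1,\ldots,t_k}(x_1,\ldots,x_k) = \Phi_F(L(x_1,\ldots,x_k))$. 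Since $\Phi_F$ is convex on $\mathcal{H}$ by Theorem \ref{thm_harnack_path_space} and $L$ is linear, the composition $\psi^f_{t_1,\ldots,t_k}$ is convex on $\mathbb{R}^{n\times k}$.

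For the second assertion, specializing to $k=1$ yields convexity of $x\mapsto \ln\bigl(\int_{\mathbb{R}^n} f(z)(4\pi t)^{-n/2}e^{-|x-z|^2/(4t)}\,dz\bigr) + |x|^2/(4t) = \ln f_t(x) + |x|^2/(4t)$. Taking the Hessian and using $\Hess(\ln f_t) = \Hess f_t/f_t - \nabla f_t \otimes \nabla f_t/f_t^2$ gives exactly Hamilton's Matrix Harnack inequality \eqref{e:matrix_harnack}.

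There is really no hard step here: the whole argument is a matching of formulas, and the only thing to be slightly careful about is that the Cameron–Martin norm of the piecewise linear path and the pushforward of the Wiener measure under the evaluation map correctly reproduce the quadratic energy and the iterated heat kernel integral appearing in \eqref{eq_conv_k}. The main conceptual content is simply that convexity of $\Phi_F$ on the infinite dimensional Cameron–Martin space restricts to convexity on every linear subspace, and the piecewise linear paths form exactly the right $nk$-dimensional subspace to extract the generalized heat flow statement.
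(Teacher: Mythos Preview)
Your proposal is correct and follows essentially the same approach as the paper: define the cylinder function $F(\gamma)=f(\gamma_{t_1},\ldots,\gamma_{t_k})$, construct the piecewise linear map $L:(x_1,\ldots,x_k)\mapsto h^{x_1,\ldots,x_k}\in\mathcal{H}$, compute its Cameron--Martin norm and the pushforward integral, and conclude by noting that $\psi^f_{t_1,\ldots,t_k}=\Phi_F\circ L$ is convex as the composition of a convex function with a linear map.
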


\begin{remark}\label{rem1_after_cor}
Generalizing the intuition of the case $k=1$, it is useful to interpret the functions
\begin{equation}
H_{t_1,\ldots,t_k}f(x_1,\ldots,x_k):=\int_{\mathbb{R}^{n k}} f(y_1+x_1,\ldots,y_k+x_k)\rho_{t_1}(0,dy_1)\ldots \rho_{t_k-t_{k-1}}(y_{k-1},dy_k)
\end{equation}
appearing in \eqref{eq_conv_k} as a generalized heat flow for $k$-point functions. A particularly interesting feature of Corollary \ref{cor_convexity} is that it gives also information about the mixed Hessians $\nabla_{x_i}\nabla_{x_j} H_{t_1,\ldots,t_k}f(x_1,\ldots,x_k)$ for $i\neq j$.
\end{remark}

\begin{remark}\label{rem2_after_cor} Another useful way to understand the generalized heat flow for $k$-point functions is to rewrite it quite redundantly as an integral over $k$ Brownian motions in $\mathbb{R}^n$:
\begin{equation}\label{eq_comp_corr}
H_{t_1,\ldots,t_k}f(x_1,\ldots,x_k)=\int_{P_0(\mathbb{R}^n)^k} f(\gamma_{t_1}+x_1,\ldots ,\gamma_{t_k}+x_k)\chi_{\{\gamma^1=\ldots = \gamma^k\}} d\mathbb{P}_0(\gamma^1)\ldots d\mathbb{P}_0(\gamma^k).
\end{equation}
The indicator function $\chi_{\{\gamma^1=\ldots = \gamma^k\}}$ enforces that these $k$ Brownian motions are completely correlated, i.e. they are actually all the same. The formula \eqref{eq_comp_corr} can be compared with the opposite extreme, the completely uncorrelated case, which is obtained by dropping the indicator function, namely
\begin{equation}
U_{t_1,\ldots,t_k}f(x_1,\ldots,x_k):=\int_{P_0(\mathbb{R}^n)^k} f(\gamma_{t_1}+x_1,\ldots ,\gamma_{t_k}+x_k) d\mathbb{P}_0(\gamma^1)\ldots d\mathbb{P}_0(\gamma^k).
\end{equation}
In particular, in the special case that $f(y_1,\ldots, y_k)= f_1(y_1)\cdots f_k(y_k)$ is a product function, this simply becomes a product of heat flows, namely
\begin{equation}
U_{t_1,\ldots,t_k}({f_1\cdots f_k})(x_1,\ldots,x_k)= H_{t_1}f_1(x_1)\cdots H_{t_k}f_k(x_k).
\end{equation}
And simply adding up the Hamilton's Matrix Harnack expressions for these $k$ heat flows one sees that
\begin{equation}
(x_1,\ldots,x_k)\to \ln U_{t_1,\ldots,t_k}({f_1\cdots f_k})(x_1,\ldots,x_k) + \tfrac{\abs{x_1}^2}{4t_1}+\ldots+\tfrac{\abs{x_k}^2}{4t_k}
\end{equation}
is also convex. Of course, the convexity in the completely correlated case is the much more interesting one, and the one that doesn't simply follow by applying Hamilton's Matrix Harnack inequality $k$ times, but for the sake of intuition it is quite useful to keep in mind these two opposite extreme cases.
\end{remark}

\begin{proof}[{Proof of Corollary \ref{cor_convexity}}]
Given the function $f:\mathbb{R}^{n\times k}\to\mathbb{R}_+$ and the times $0< t_1< \ldots <t_k$ we can define a positive function on path space by setting
\begin{equation}
F(\gamma):=f(\gamma_{t_1},\ldots,\gamma_{t_k})\, .
\end{equation}
Now thinking of the times $0< t_1< \ldots <t_k$ as fixed, to any $k$-points $x_1,\ldots,x_k\in\mathbb{R}^n$ we associate a Cameron-Martin vector $h^{x_1,\ldots,x_k}\in\mathcal{H}$ by defining
\begin{equation}
h^{x_1,\ldots,x_k}_t:=\left\{\begin{array}{ll}
        \tfrac{t}{t_1}x_1 & \text{for } 0\leq t\leq t_1\\
        x_1+\tfrac{t-t_1}{t_2-t_1}(x_2-x_1) & \text{for } t_1\leq t\leq t_2\\
        \ldots & \ldots \\
        x_{k-1}+\tfrac{t-t_{k-1}}{t_k-t_{k-1}}(x_{k}-x_{k-1}) & \text{for } t_{k-1}\leq t\leq t_k\\
        x_k & \text{for } t \geq t_k
        \end{array}\right.
\end{equation}
Since $h^{x_1,\ldots,x_k}_t$ is piecewise linear it is easy to compute that
\begin{equation}
\norm{h}^2= \frac{|x_1|^2}{t_1}+ \frac{|x_2-x_1|^2}{(t_2-t_1)}+ \ldots +  \frac{|x_k-x_{k-1}|^2}{(t_k-t_{k-1})}.
\end{equation}
To proceed, we recall that if $e_{t_1,\ldots,t_k}:P_0(\mathbb{R}^n)\to \mathbb{R}^{nk}, \gamma\mapsto (\gamma_{t_1},\ldots, \gamma_{t_k})$ denotes the evaluation map at the times $0< t_1< \ldots<t_k$ then the pushforward of the Wiener measure is given by a product of heat kernel measures, namely
\begin{equation}
e_{t_1,\ldots,t_k \,\ast} \mathbb{P}_0=  \rho_{t_1}(0,dy_1)\rho_{t_2-t_1}(y_1,dy_2)\ldots \rho_{t_k-t_{k-1}}(y_{k-1},dy_k).
\end{equation}
Using this, we compute
\begin{multline}
\int_{P_0\mathbb{R}^n} F(\gamma+h) \, d\mathbb{P}_0(\gamma)
=\int_{P_0\mathbb{R}^n} f(\gamma_{t_1}+h_{t_1},\ldots,\gamma_{t_k}+h_{t_k})\,  d\mathbb{P}_0(\gamma)\\
=\int_{\mathbb{R}^{n\times k}} f(y_1+x_1,\ldots,y_1+x_1)\,  \rho_{t_1}(0,dy_1)\rho_{t_2-t_1}(y_1,dy_2)\ldots \rho_{t_k-t_{k-1}}(y_{k-1},dy_k).
\end{multline}
Now let us define $\ell_{t_1,\ldots,t_k}:\mathbb{R}^{n\times k}\to\mathcal{H}, (x_1,\ldots,x_k)\mapsto h^{x_1,\ldots,x_k}_t$ and observe that this is a linear map. Since by Theorem \ref{thm_harnack_path_space} the functional $\Phi_F: \mathcal{H}\to \mathbb{R}$ is convex, the composed function $\Phi_F\circ \ell_{t_1,\ldots,t_k}:\mathbb{R}^{n\times k}\to \mathbb{R}$ is also convex. The above computation shows that $\psi^f_{t_1,\ldots,t_k}=\Phi_F\circ \ell_{t_1,\ldots,t_k}$, and this proves the corollary.
\end{proof}

To conclude this section, let us prove our differential Harnack inequalities on path space of $\mathbb{R}^n$:

\begin{proof}[{Proof of Theorem \ref{thm_diff_harnack_eucl}}]
Given any vector $v\in\RR^n$ and any function $\varphi\in H^1_0(\mathbb{R}^+)$ we consider the direction
\begin{equation}
h_\eps(t) = \eps \varphi(t)v.
\end{equation}
As a consequence of Theorem \ref{thm_harnack_path_space}, the function
\begin{equation}
\eps\mapsto \Phi_F(h_\eps)=\ln \EE\left[ F(\gamma+ \eps \varphi v) \right] + \frac{1}{4}|| \eps \varphi v||^2
\end{equation}
is convex. Now, a straightforward computation yields
\begin{equation}
\frac{d}{d\eps} \Phi_F(h_\eps)=\frac{\EE\left[ D_{\varphi v}F(\gamma+ \eps \varphi v) \right]}{\EE\left[ F(\gamma+ \eps \varphi v) \right]} + \frac{1}{2} \eps |v|^2  ||\varphi ||^2,
\end{equation}
and
\begin{equation}
\frac{d^2}{d\eps^2}|_{\eps=0} \Phi_F(h_\eps)=\frac{\EE\left[ D_{\varphi v}D_{\varphi v}F \right]}{\EE\left[ F \right]} - \left( \frac{\EE\left[ D_{\varphi v}F \right] }{\EE\left[ F \right]}\right)^2 + \frac{1}{2} |v|^2  ||\varphi ||^2\geq 0.
\end{equation}
Recalling the definitions of the $\varphi$-gradient (Definition \ref{def_eucl_phi_grad}) and $\varphi$-Hessian (Definition \ref{def_eucl_phi_hess}), we conclude that
\begin{align}
 &\frac{\dE\left[\Hess_{\varphi}F\right]}{\dE[F]}
 -\frac{\dE\left[\nabla_\varphi F\right]\otimes \dE\left[\nabla_\varphi F\right]}{\dE[F]^2}
 +\frac{\delta}2\norm{\varphi}^2\geq 0\, .
\end{align}
This proves the theorem.
\end{proof}

\begin{remark}\label{rem_quad_form_rn}
As a motivation for the analysis in the manifold case, let us record that the second variation in a general direction $h\in\cH$ is given by
\begin{align}\label{quad_form_eucl}
\nabla^2\Phi_F(0)[h,h]=\frac{\EE\left[ D_{h}(D_{h }F) \right]}{\EE\left[ F \right]} - \left( \frac{\EE\left[ D_{h}F \right] }{\EE\left[ F \right]}\right)^2 + \frac{1}{2} ||h||^2.
\end{align}
\end{remark}

\bigskip

\section{Preliminaries for the Manifold Case}\label{sec_prelim}

In this section we briefly discuss some preliminaries regarding the analysis on path space. Standard references for stochastic analysis on manifolds are the books by Hsu \cite{Hsu} and Stroock \cite{Stroock}.

In the following, $M$ denotes an $n$-dimensional Riemannian manifold (either compact or complete with Ricci curvature bounded below). Given any $x\in M$, recall from the introduction that path space
\begin{align}
 P_xM=\{\gamma\colon[0,\infty) \to M \, | \, \gamma \text{ continuous}, \gamma_0=x\}.
 \end{align}
consists of all continuous paths in $M$ based at $x$. Path space is equipped with the compact-open topology.

\subsection{Wiener Measure and Stochastic Parallel Transport}\label{sec_wiener_stoch_par}

Brownian motion and stochastic parallel transport on Riemannian manifolds are most conveniently described via the Eells-Elworthy-Malliavin formalism. The gist of this construction is that Cartan's rolling without slipping provides a way to identify Brownian motion $W_t$ on $\RR^n$ with Brownian motion on $M$, as well as with horizontal Brownian motion on the frame bundle $FM$, see equation \eqref{EEM_eq} below.

To describe this, consider the $O_n$-bundle $\pi:FM\to M$ of orthonormal frames. By definition, the fiber over any point $x\in M$ is given by the orthonormal maps $u: \RR^n\to T_xM$. Thus, if $e_1,\ldots,e_n$ denotes the standard basis of $\RR^n$, then $ue_1,\ldots,ue_n$  is an orthonormal basis of $T_xM$, where $x=\pi(u)$. Recall from basic differential geometry (see e.g. \cite{KobN}) that the frame bundle comes equipped with $n$ canonical horizontal vector fields $H_1,\ldots, H_n$, which are defined by 
\begin{equation}
H_a(u)=(ue_a)^\ast,
\end{equation}
where $\ast$ denotes the horizontal lift.

Let $(P_0\RR^n,\Sigma,\P_0)$ be the space of continuous curves in $\RR^n$ equipped with the Borel $\sigma$-algebra and the Euclidean Wiener measure, and denote the coordinate process by $\bar W_t\colon P_0\RR^n\to \RR^n$. We use the normalization that the generator of $\bar W_t$ is given by $\Delta_{\RR^n}$ instead of $\frac12\Delta_{\RR^n}$, i.e. the covariation is given by
\begin{equation}\label{eq_normalization}
[\bar W_t^a,\bar W_t^b]=2t \delta_{ab}.
\end{equation}

Given an initial frame $u$ above $x\in M$, following Eells-Elworthy-Malliavin one considers the following stochastic differential equation (SDE) on the frame bundle
 \begin{align}\label{EEM_eq}
 \dd \bar{U}_t=\sum_{a=1}^nH_a(\bar{U}_t)\circ\! \dd \bar W_t^a,\quad \bar{U}_0=u,
 \end{align}
 where $H_1,\ldots, H_n$ are the canonical horizontal vector fields, and $\circ d$ denotes the Stratonovich differential.
 
 \begin{definition}[Ito map, Wiener measure, and stochastic horizontal lift]\label{def_ito_map_etc}  Let $\bar{U}:P_0\mathbb{R}^n\to P_uFM$ be the solution map of the SDE \eqref{EEM_eq}. The map $I:=\pi(\bar{U})\colon P_0\RR^n\to P_xM$ is called the Ito map. The Wiener measure on $P_xM$ is defined as the pushforward measure $\P_x=I_\#\P_0$. The map $W_t:= \bar W_t\circ I^{-1}\colon P_xM\to \mathbb R^n$ 
 is the euclidean Brownian motion under $\P_x$.
 Finally, the map $U:=\bar{U}\circ I^{-1}\colon P_xM\to P_uFM$  is called the stochastic horizontal lift.
 \end{definition}
 
By definition, the Ito map $I$ provides an isomorphism between the probability spaces $(P_0\RR^n,\Sigma,\P_0)$ and $(P_xM,\Sigma,\P_x)$. 
As stated in the introduction, the Wiener measure $\P_x$ on $P_xM$ is uniquely characterized by the following property. For any evaluation map
\begin{equation}
e_{t_1,\ldots,t_k}\colon P_xM\to M^k,\quad \gamma\mapsto (\gamma_{t_1},\ldots, \gamma_{t_k}),
\end{equation}
its pushforward is given by 
\begin{align}
  (e_{t_1,\ldots,t_k})_\#\dd \P_x(y_1,\ldots,y_k)=\rho_{t_1}(x,d y_1)\rho_{t_2-t_1}(y_1,d y_2)\cdots \rho_{t_k-t_{k-1}}(y_{k-1},d y_k),
 \end{align}
where $\rho_t(x,dy)=\rho_t(x,y)\, dv_g(y)$ denotes the heat kernel measure on $M$.
 
 The main advantage of the frame bundle formalism is that in addition to the Wiener measure of Brownian motion on $M$ it also yields (without any additional effort) a notion of stochastic parallel transport:
 
 \begin{definition}[stochastic parallel transport]\label{def_stoch_par}
 The family of isometries $P_t:=U_0U_t^{-1}\colon T_{\pi(U_t)}M\to T_xM$ is called stochastic parallel transport.
 \end{definition}
 
To conclude this section, let us point out as a consequence of the SDE \eqref{EEM_eq}, taking also into account our normalization \eqref{eq_normalization}, the Ito formula on the frame bundle takes the form
\begin{align}\label{Ito frame}
\dd \tilde f=H_a\tilde f \dd W_t^a+\Delta_H\tilde f \dd t,
\end{align}
where $\tilde f=f\circ \pi:FM\to M\to\RR$, and where $\Delta_H\equiv\sum_{a=1}^n H_a^2$ denotes the horizontal Laplacian.

\subsection{Gradients on Path Space}\label{sec_prelim_gradients}

This section is dedicated to studying the various notions of gradients which appear in this paper and relating them.  In general there are many such notions that play a role in the literature, and in this paper at some point or another, however most are easily related.  

As before, we denote by $\cH$ the Cameron-Martin space, i.e. the Hilbert space of $H^1$-curves $\{h_t\}_{t\geq0}$ in $\RR^n$ with $h_0=0$, equipped with the inner product
\begin{align}
\ip{h,k}_\cH:=\int_0^\infty\ip{\dot h_t,\dot k_t}\dd t.
\end{align}
Any $h\in\mathcal{H}$ can be viewed as a vector field $Uh$ on $P_xM$  by taking
\begin{equation}\label{h_as_vf}
(Uh)_t(\gamma)=U_t(\gamma)h_t\in T_{\gamma_t}M,
\end{equation}
where $U(\gamma)$ denotes the stochastic horizontal lift of $\gamma$ as in Definition \ref{def_ito_map_etc}.

For a function $F:P_xM\to \dR$, a priori there are several notions a priori, which can be listed:
\begin{align}
&\nabla^{\cH} F:P_xM\to \cH \text{ the Malliavin gradient},\notag\\
&\nabla^\parallel_t F:P_xM\to T_xM \text{ the parallel gradients},\notag\\
&\nabla^{\mathcal{L}} F :P_xM\to TP_xM \text{ the $L^2$-gradient}.
\end{align}

Additionally in this paper we will be considering the $\varphi$-gradient $\nabla_\varphi F:P_xM\to T_xM$.

The following summarizes the relationships between the first of these notions of gradient:

\begin{lemma}[gradients]\label{lemma_gradients}
Let $F:P_xM\to \dR$, then we have the relations:
\begin{align}
 \ip{\nabla^{\mathcal{L}} F,Uh }_{L^2} =  D_{Uh} F  = \ip{\nabla^{\cH} F,h}_{\cH} = \int_0^\infty \ip{ \nabla^\parallel_t F, \dot h_t} \, dt	\, ,
\end{align}
  	where $Uh$ denotes the vector field associated to $h\in \cH$ as in \eqref{h_as_vf}.
\end{lemma}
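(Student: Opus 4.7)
The plan is to recognize that all three gradients are simply Riesz-type representatives of the single linear functional $h \mapsto D_{Uh}F$ on the Cameron-Martin space $\mathcal{H}$, so the chain of equalities reduces to unwinding the three inner products in which this functional is being represented.

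First I would verify that the functional $h\mapsto D_{Uh}F$ is a well-defined bounded linear functional on $\mathcal{H}$. For the cylinder functions that are the working class in this paper, this is explicit: if $F(\gamma)=f(\gamma_{t_1},\ldots,\gamma_{t_k})$, then
\begin{equation*}
D_{Uh}F(\gamma)=\sum_{i=1}^k \bigl\langle \nabla_i f\bigl(\gamma_{t_1},\ldots,\gamma_{t_k}\bigr),\, U_{t_i}(\gamma)\, h_{t_i}\bigr\rangle,
\end{equation*}
which is plainly linear in $h$ and, via the continuous embedding $\mathcal{H}\hookrightarrow C^0$, bounded.

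Next I would invoke the Riesz representation theorem on the Hilbert space $\mathcal{H}$ to produce a unique $\nabla^{\mathcal{H}}F\in\mathcal{H}$ with $D_{Uh}F=\langle \nabla^{\mathcal{H}}F,h\rangle_{\mathcal{H}}$; this is precisely the definition of the Malliavin gradient, so the middle equality is by construction. Expanding the $\mathcal{H}$-inner product gives
\begin{equation*}
\langle \nabla^{\mathcal{H}}F,h\rangle_{\mathcal{H}} = \int_0^\infty \bigl\langle \tfrac{d}{dt}(\nabla^{\mathcal{H}}F)_t,\,\dot h_t\bigr\rangle\, dt,
\end{equation*}
and the parallel gradient is then \emph{defined} by $\nabla^{\parallel}_t F:=\tfrac{d}{dt}(\nabla^{\mathcal{H}}F)_t\in T_xM$, which gives the rightmost equality immediately. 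Equivalently one can check that the primitive $\int_0^t\nabla^{\parallel}_s F\, ds$ lies in $\mathcal{H}$ and represents the same Riesz functional as $\nabla^{\mathcal{H}}F$, which pins them down.

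For the first equality, the $L^2$-inner product on vector fields along $\gamma$ is $\langle V,W\rangle_{L^2}=\int_0^\infty \langle V_t,W_t\rangle_{g_{\gamma_t}}\, dt$, and $\nabla^{\mathcal{L}}F$ is defined by $D_V F=\langle \nabla^{\mathcal{L}}F,V\rangle_{L^2}$ on its natural domain of test vector fields; taking $V=Uh$ yields the identity in the statement. The main technical obstacle here is that $\nabla^{\mathcal{L}}F$ typically is not in $L^2$ but only a distribution on the curve (analogous to the fact that $H^1\hookrightarrow L^2$ is not surjective), so the cleanest route is to test only against $V=Uh$ for $h\in\mathcal{H}$, defining the pairing via the identity itself. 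The compatibility with the parallel gradient is then recorded by integration by parts in time, $P_t\nabla^{\mathcal{L}}_t F=-\tfrac{d}{dt}\nabla^{\parallel}_t F$ with vanishing boundary contribution at $t=0$ and at $t=\infty$, but this distributional identification is not actually required to establish the four-way equality in the lemma, since all four expressions have already been matched to the common functional $D_{Uh}F$.
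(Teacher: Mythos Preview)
Your treatment of the first two equalities matches the paper's: both are definitions of the $L^2$-gradient and the Malliavin gradient, so nothing needs to be done there.

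The gap is in the last equality. You write that the parallel gradient ``is then \emph{defined} by $\nabla^{\parallel}_t F := \tfrac{d}{dt}(\nabla^{\mathcal{H}}F)_t$,'' which makes the rightmost equality a tautology. But in this paper the parallel gradient is \emph{not} defined that way: it has its own independent definition (recalled from \cite{Naber_char} inside the proof) as the directional derivative of $F$ along the vector field which vanishes up to time $t$ and is parallel-translation invariant afterwards; concretely, for a cylinder function,
\[
\nabla_t^\parallel F = \sum_{t_j > t} P_{t_j}(\gamma)\,\nabla^{(j)} f(\gamma_{t_1},\ldots,\gamma_{t_k}).
\]
The identity $\nabla_t^\parallel F = \tfrac{d}{dt}(\nabla^{\mathcal{H}}F)_t$ is recorded in the paper \emph{after} the lemma as a \emph{consequence} of it, not as its definition. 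So by adopting that identity as your definition you have assumed exactly the content that needs to be proved.

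What the paper actually does for this step is the telescoping computation: with the explicit formula above, one checks on each subinterval $[t_{i-1},t_i]$ that
\[
\int_{t_{i-1}}^{t_i} \langle \nabla_t^\parallel F,\dot h_t\rangle\,dt = \sum_{j\ge i}\bigl\langle P_{t_j}\nabla^{(j)} f,\, h_{t_i}-h_{t_{i-1}}\bigr\rangle,
\]
and summing over $i$ reproduces $\sum_j \langle P_{t_j}\nabla^{(j)} f, h_{t_j}\rangle = D_{Uh}F$. Your proposal would be fine if you replaced the definitional shortcut by this verification (or by any direct check that the independently defined $\nabla_t^\parallel F$ represents the functional $h\mapsto D_{Uh}F$ in the $\mathcal{H}$-inner product).
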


\begin{proof}
The first two equalities are tautological, as they are in fact the definitions of the $L^2$-gradient and the Malliavin gradient, respectively. Thus, we will focus on relating these notions of gradient to the parallel gradient as in the last equality.  It is enough for us to show this on cylinder functions
\begin{equation}
F=f(\gamma_{t_1},\ldots,\gamma_{t_k}).
\end{equation}
For a cylinder function $F$, the directional derivative $D_{Uh}F$ in direction of the vector $Uh$ is given by
\begin{align}
D_{Uh}F(\gamma)&=\sum_{j=1}^k\ip{\nabla^{(j)}f(\gamma_{t_1}, \cdots,\gamma_{t_k}),U_{t_j}(\gamma)h_{t_j} }_{T_{\gamma_{t_j}}M}\\
&=\sum_{j=1}^k\ip{ P_{t_j}(\gamma)\nabla^{(j)}f(\gamma_{t_1}, \cdots,\gamma_{t_k}),h_{t_j} }_{T_xM},
\end{align}
where $\nabla^{(j)}$ denotes the gradient with respect to the $j$-th variable, and $P_t(\gamma)$ denotes stochastic parallel transport as in Definition \ref{def_stoch_par}. Recall from \cite{Naber_char} that the $t$-parallel gradient $\nabla_t^\parallel F: P_xM\to T_xM$ is defined via the directional derivative of $F$ along the vector field which is $0$ up to time $t$ and parallel translation invariant for times larger than $t$, i.e.
\begin{equation}\label{t_par_easy}
\nabla_t^\parallel F=\sum_{t_j> t} P_{t_j}(\gamma)\nabla^{(j)}f(\gamma_{t_1}, \cdots,\gamma_{t_k}).
\end{equation}

As a motivation for the related but more complicated analysis of the Hessian in subsequent sections, it is convenient to rephrase the above in terms of the frame bundle $FM$ as in Section \ref{sec_wiener_stoch_par}. In terms of the horizontal vector fields $H^{(j)}$ on $FM^k$ we can write the directional derivative as
\begin{align}
D_{Uh} F=  \sum_{j} \ip{ H^{(j)} \tilde{f}\, ,h_{t_j}}\, ,
\end{align}
where $\tilde{f}=f\circ \pi :FM^k\to \mathbb{R}$ denotes the lift of $f$. Moreover, equation \eqref{t_par_easy} can be rewritten as
\begin{align}
\nabla^\parallel_t F = \sum_{t_j> t}	H^{(j)} \tilde{f}\, ,
\end{align}
so that
\begin{align}
\int_{t_{i-1}}^{t_{i}} \ip{ \nabla^\parallel_t F, \dot h_t}\, dt =  \sum_{j\geq i}\ip{ H^{(j)} \tilde{f}\, , h_{t_{i}}-h_{t_{i-1}} }\, .
\end{align}
Finally, let us put all of this together in order to compute
\begin{align}
D_{Uh} F&=\sum_{j} \ip{ H^{(j)} \tilde{f}\, ,h_{t_j}}=\sum_{j\geq i} \ip{ H^{(j)} \tilde{f}\, , h_{t_{i}}-h_{t_{i-1}} }=\int_0^\infty \ip{ \nabla^\parallel_t F, \dot h_t}\, dt\, ,
\end{align}
which proves the final equality.
\end{proof}

Therefore we have seen that all the notions of gradient contain roughly the same information, simply packaged in a slightly different form.  Let us use this to understand our notion of $\varphi$-gradient $\nabla_\varphi F:P_x M \to T_x M$. The following is immediate from the definition:

\begin{corollary}[gradients]
Let $F:P_xM\to \dR$ and $\varphi:[0,\infty)\to \dR$ be an $H^1$-function with $\varphi(0)=0$.  Then we have the relations
\begin{align}
\ip{ \nabla_\varphi F , v} \equiv D_{U(\varphi v)} F= \ip{\nabla^{\mathcal{L}} F,U(\varphi  v)}_{L^2} = \ip{\nabla^{\cH} F,\varphi v}_{\cH} = \int_0^\infty \ip{ \nabla^\parallel_t F, \dot \varphi  v}	\, dt\, ,
\end{align}
where we are viewing $\varphi v\in \cH$.
\end{corollary}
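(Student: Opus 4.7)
The plan is to observe that this corollary follows essentially by specialization of Lemma \ref{lemma_gradients} to the particular Cameron-Martin element $h = \varphi v \in \cH$, once we verify that $\varphi v$ indeed lies in $\cH$ and that the definition of $\nabla_\varphi F$ matches the directional derivative $D_{Uh}F$ in this case.

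First I would check that $h_t := \varphi(t) v$ belongs to $\cH$: indeed $h_0 = \varphi(0) v = 0$ and
\begin{equation}
\|h\|_\cH^2 = \int_0^\infty |\dot\varphi(t)|^2 |v|^2 \, dt = |v|^2 \|\varphi\|^2 < \infty,
\end{equation}
since $\varphi \in H^1_0(\RR^+)$. Under the identification \eqref{h_as_vf}, the vector field $Uh$ on $P_xM$ is $(Uh)_t(\gamma) = U_t(\gamma)(\varphi(t) v) = \varphi(t) U_t(\gamma) v$, which is exactly the parallel translation of $v$ scaled by $\varphi$, i.e., $\varphi V$ in the notation of Definition \ref{d:phi_gradient}. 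Hence by the very definition of the $\varphi$-gradient,
\begin{equation}
\ip{\nabla_\varphi F, v} = D_{\varphi V} F = D_{U(\varphi v)} F.
\end{equation}

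Next I would apply Lemma \ref{lemma_gradients} with this choice of $h = \varphi v$. The lemma directly yields the chain of equalities
\begin{equation}
D_{U(\varphi v)} F = \ip{\nabla^\mathcal{L} F, U(\varphi v)}_{L^2} = \ip{\nabla^\cH F, \varphi v}_\cH = \int_0^\infty \ip{\nabla^\parallel_t F, \dot\varphi(t)\, v}\, dt,
\end{equation}
where in the last term we used $\dot h_t = \dot\varphi(t) v$. Combining the two displays gives exactly the claimed four-way equality.

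Since Lemma \ref{lemma_gradients} has already been proved and the identification of $U(\varphi v)$ with $\varphi V$ is a direct unwinding of definitions, there is no genuine obstacle here; the only conceptual point worth emphasizing in the writeup is that $\nabla_\varphi F$ should be viewed as the restriction of the Malliavin (equivalently $L^2$, equivalently parallel) gradient to the finite dimensional subspace of $\cH$ spanned by directions of the form $\varphi v$ with $v \in T_xM$, so that an orthonormal family $\{\varphi_j\}$ in $H^1_0(\RR^+)$ produces, via $\{\varphi_j v_i\}$ over an orthonormal basis $\{v_i\}$ of $T_xM$, an orthonormal basis of $\cH$ recovering the full Malliavin gradient.
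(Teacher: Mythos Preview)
Your proposal is correct and matches the paper's approach exactly: the paper simply states that the corollary is ``immediate from the definition,'' which is precisely the specialization of Lemma \ref{lemma_gradients} to $h=\varphi v$ that you carry out. Your writeup is in fact more detailed than the paper's, explicitly verifying $\varphi v\in\cH$ and unwinding the identification $U(\varphi v)=\varphi V$.
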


One can therefore view the $\varphi$-gradient as a smoothed version of the parallel gradient, where instead of defining a gradient for each $t\geq 0$ we have defined a gradient for each $\varphi:[0,\infty)\to \dR$.\\

To conclude this section, let us remark that from Lemma \ref{lemma_gradients} (gradients) one sees that
\begin{equation}
\nabla_t^\parallel F=\frac{d}{dt}(\nabla^{\cH} F)_t,
\end{equation}
i.e. the $t$-parallel gradient is the time-derivative of the Malliavin gradient. In particular, it follows that
\begin{align}
 \norm{\nabla^{\cH} F}^2_\cH=\int_0^\infty\abs{\nabla_t^\parallel F}^2\dd t.
\end{align}
Also, having defined them on cylinder functions, thanks to the integration by parts formula (see below), the gradients can be extended to unbounded closed operators on $L^2$.

\subsection{Intertwining Formula and Integration by Parts}\label{sec_int_by_parts}

Let us first recall the classical integration by parts formula on path space. If $F,G: P_xM\to\mathbb{R}$ are cylinder function and $h\in\cH$, assuming say either that $h$ is compactly supported or $\Ric=0$, then 
\begin{align}
\Ex\left[D_{Uh} F\, G \right]=\Ex\left[ -FD_{Uh}G+\frac12FG\int_0^\infty\ip{ \dot h_t+\Ric_{t}h_t, d W_t}\right],
\end{align}
see \cite{Driver_ibp, Hsu_quasi}. 
Here, $\Ric_{t}\colon \RR^n\to \RR^n$ is the Ricci transform at $U_t$, i.e. for $v\in\RR^n$, $\Ric_{t}v$ denotes the unique element in $\RR^n$ such that $\ip{\Ric_{t}v,w}=\Ric_{\pi(U_t)}(U_tv, U_tw)$ for all $w\in \RR^n$.

More generally, as pointed out e.g. in \cite[Sec. 2.3]{cruzeirofang}, instead of constant $h\in\cH$, one can also consider adapted processes $v_t: P_xM \to\mathbb{R}^n$  with $\Ex\left[ ||v||_{\cH}^2\right]<\infty$. To discuss this, recall first from Definition \ref{def_ito_map_etc} that the Ito map
\begin{equation}
I: P_0\mathbb{R}^n\to P_xM
\end{equation}
is an isomorphism between probability spaces. However, the Ito map does not preserve the geometry. The curvature term one gets from differentiating the Ito map is captured conveniently by the intertwining formula from Cruzeiro-Malliavin \cite[Thm. 2.6]{cruzeiromalliavin}: The derivative of a differentiable function $F:P_xM\to\mathbb{R}$ can be computed in terms of the derivative of the composed function $F\circ I : P_0\mathbb{R}^n\to\mathbb{R}$ via
\begin{align}\label{Bismut covariant derivative}
D_{v^\ast} (F\circ I)=(D_{Uv}F)\circ I,
\end{align}
where the $\mathbb{R}^n$-valued process $v^\ast$ is given by
\begin{align}\label{Bismut covariant derivative 2}
\dd v_t^\ast=\dd v_t-\int_0^t\cR_s(\circ d W_s,v_s) \circ d W_t,
\end{align}
where
\begin{equation}
\ip{ \cR_s(x,y)w,z}\equiv \Rm_{\gamma_s}\big(U_s(\gamma) x,U_s(\gamma) y,U_s(\gamma) z, U_s(\gamma) w\big),
\end{equation}
for $x,y,z,w\in\mathbb{R}^n$. Here, the the process $v^\ast$ is a so-called tangent process. In general, a tangent process $\zeta$ is an $\mathbb R^n$-valued semi-martingale 
\begin{align}\label{eq: tangent}
d\zeta_t=A_t \dd W_t+b_t \dd t,
\end{align}
 where $t\mapsto (A_t,b_t)$ is an adapted process taking values in $\frak{so}_n\times \mathbb R^n$ such that $\Ex\left[\int_0^\infty |b_s|^2 \dd s\right]<\infty$. The derivative of a function $\bar{F}:P_0\mathbb{R}^n\to \mathbb{R}$ in direction of a tangent process $\zeta$ is defined by
\begin{align}\label{eq: der}
D_\zeta \bar{F}(\beta)=\frac{\dd}{\dd\varepsilon}\Big|_{\eps=0} \bar{F}\left(\psi_\eps^\zeta(\beta)\right),
\end{align}
where $\beta=I^{-1}(\gamma)$ and
\begin{align}
&\psi_\eps^\zeta(\beta)_t=\int_0^te^{\eps A_s(\gamma)} \dd W_s(\gamma)+\eps\int_0^t b_s(\gamma) \dd s.
\end{align}

The intertwining formula can be used to derive the following variant of the integration by parts formula:

\begin{proposition}[{integration by parts, c.f. \cite[Sec. 2.3]{cruzeirofang}}]\label{prop_int_by_parts}
If $F,G: P_xM\to\mathbb{R}$ are cylinder functions, then for any adapted process $v_t: P_xM \to\mathbb{R}^n$ with $\Ex \left[\int_0^\infty\left(  \abs{\dot v_t}^2+\abs{\dot v_t+\Ric_{t}v_t}^2\right) \dd t\right]<\infty$, we have
\begin{align}\label{eq: ibp}
\Ex\left[D_{Uv} F\, G \right]=\Ex\left[ -FD_{Uv}G+\frac12FG\int_0^\infty\ip{ \dot v_t+\Ric_{t}v_t, d W_t}\right].
\end{align}
\end{proposition}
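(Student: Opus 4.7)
The plan is to pull the statement back to Euclidean path space via the Ito map $I\colon (P_0\dR^n,\PP_0)\to (P_xM,\PP_x)$, apply a Cameron--Martin type integration by parts formula for tangent processes there, and then translate back using the intertwining formula \eqref{Bismut covariant derivative}. By the intertwining formula, $(D_{Uv}F)\circ I = D_{v^\ast}(F\circ I)$ and likewise for $G$, where $v^\ast$ is the tangent process associated to $v$ by \eqref{Bismut covariant derivative 2}. Therefore the left-hand side of \eqref{eq: ibp} equals $\Ex_0[D_{v^\ast}(F\circ I)\cdot (G\circ I)]$, and it suffices to establish a Euclidean IBP for tangent processes together with a computation of the Ito drift of $v^\ast$.

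For the Euclidean step, I would prove that for any tangent process $\zeta$ as in \eqref{eq: tangent} with $d\zeta_t = A_t\, dW_t + b_t\, dt$,
\begin{equation*}
\Ex_0\big[D_\zeta \bar F\cdot \bar G\big] = \Ex_0\Big[-\bar F D_\zeta \bar G + \tfrac12 \bar F\bar G \int_0^\infty \ip{b_t, dW_t}\Big].
\end{equation*}
This is obtained by differentiating at $\eps=0$ the identity $\Ex_0[\bar F(\psi_\eps^\zeta)\bar G(\psi_\eps^\zeta)] = \Ex_0[\bar F\bar G\, \mathcal{J}_\eps]$, where $\mathcal{J}_\eps$ is the Radon--Nikodym derivative of the law of $\psi_\eps^\zeta$ with respect to $\PP_0$. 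The skew-symmetric rotations $e^{\eps A_t}$ preserve the law of Brownian motion by L\'evy's characterization, so only the drift $b_t$ contributes to $\mathcal{J}_\eps$ through a Girsanov exponential; the factor $\tfrac12$ arises from the normalization \eqref{eq_normalization}.

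The final step is to identify the Ito drift of $v^\ast$. Setting $B_t := \int_0^t \cR_s(\circ dW_s,v_s)$, the definition \eqref{Bismut covariant derivative 2} reads $dv^\ast_t = \dot v_t\, dt - B_t\circ dW_t$. Converting the outer Stratonovich integral to Ito form produces the correction $-\tfrac12 d[B,W]_t$; a short calculation using the normalization $d[W^a,W^b] = 2\delta_{ab}\, dt$ and the pair-symmetry $\Rm_{abcd}=\Rm_{cdab}$ shows that this bracket collapses exactly onto $\Ric_t v_t\, dt$. Hence in Ito form
\begin{equation*}
dv^\ast_t = \big(\dot v_t + \Ric_t v_t\big)\, dt + (\text{adapted, skew-symmetric coefficient})\, dW_t,
\end{equation*}
so $v^\ast$ is a valid tangent process with drift $b_t = \dot v_t + \Ric_t v_t$. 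Substituting this into the Euclidean IBP and pushing forward by $I$ yields \eqref{eq: ibp}.

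The main technical obstacle is this last computation: converting the anticipating-looking double Stratonovich integral in \eqref{Bismut covariant derivative 2} into Ito form and verifying that the trace of the resulting quadratic covariation condenses precisely to $\Ric_t v_t$, with the right sign and factor. The two integrability assumptions in the statement are tailored to this scheme: $\Ex[\int|\dot v_t|^2\, dt]<\infty$ ensures that $v^\ast$ is a bona fide tangent process in the sense of \eqref{eq: tangent}, while $\Ex[\int|\dot v_t+\Ric_t v_t|^2\, dt]<\infty$ guarantees that the Girsanov divergence term $\int_0^\infty \ip{\dot v_t+\Ric_t v_t, dW_t}$ is a well-defined $L^2$ martingale.
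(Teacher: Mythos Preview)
Your proposal is correct and follows essentially the same route as the paper's proof: pull back through the Ito map and the intertwining formula, invoke Girsanov (using that the $e^{\eps A_t}$ rotations preserve the law of Brownian motion) to get the Euclidean integration by parts for tangent processes, and then convert the outer Stratonovich differential in \eqref{Bismut covariant derivative 2} to Ito form to read off the drift $\dot v_t+\Ric_t v_t$. The only cosmetic difference is that the paper first reduces to $G=1$ via the product rule, whereas you carry $G$ through the Euclidean step directly.
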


\begin{proof}
By the product rule it is enough to prove the integration by parts formula in the case $G=1$.

Consider the function $\bar{F}:=F\circ I: P_0\mathbb{R}^n\to \mathbb{R}$, where $I$ denotes the Ito map. Applying Girsanov's theorem, we see that on $P_0\RR^n$ for every tangent process $\zeta$ of the form \eqref{eq: tangent} we have
\begin{align}\label{adjointness}
\Eo\left[D_\zeta\bar F\right]=\frac12\Eo\left[\bar F\int_0^\infty\ip{b_t\circ I, \dd \bar W_t}\right].
\end{align}
In particular, we can apply this for $\zeta= v^\ast$ from equation \eqref{Bismut covariant derivative 2}. Using Ito calculus, we compute
\begin{align}
\int_0^t\cR_s(\circ d W_s,v_s)\circ d W_t=&\int_0^t\cR_s( \circ d W_s,v_s)\dd W_t+ \frac12 \cR_t(\circ d W_t, v_t) dW_t\\
=&\int_0^t\cR_s( \circ d W_s,v_s)\dd W_t- \Ric_t v_t dt.
\end{align}
Hence, the non-martingale part of $v^\ast$ is given by
\begin{equation}
\dot v_t\dd t+\Ric_tv_t\dd t.
\end{equation}
Thus, together with the intertwining formula \eqref{Bismut covariant derivative} we conclude that
\begin{align}
\Ex\left[D_{Uv} F\right]=\Eo\left[D_{v^\ast} \bar F\right]&=\frac12 \Eo\left[\bar F\int_0^\infty \ip{(\dot v_t+\Ric_t v_t) \circ I , \dd \bar W_t}\right]\\&=\frac12 \Ex\left[ F\int_0^\infty\ip{ \dot v_t+\Ric_t v_t , \dd W_t}\right].
\end{align}
This proves the proposition.
\end{proof}

\bigskip

\section{The Ricci-Flat Case}\label{sec: ricflat}

In this section, we prove our main theorems in the Ricci-flat case. In Section \ref{sec_halfway_rf}, we will find a certain positive quadratic form. In Section \ref{sec_PLY_rf}, we will rewrite this quadratic form in a more geometric way to prove our main differential Harnack inequality on path space (Theorem \ref{thmintro:harnack_ricci_flat}). In Section \ref{sec_PMH_rf}, we will establish the Matrix Harnack Inequality on path space. Finally, in Section \ref{sec_harnack_L2}, we express our differential Harnack inequality in terms of the $L^2$-Laplacian.

\subsection{A Positive Quadratic Form}\label{sec_halfway_rf}
The goal of this section is to prove Theorem \ref{thmRicflat: quadratic form} (Halfway Harnack).
To this end we start with the following definitions.
\begin{definition}[{adapted $L^2$-vector fields on path space}]\label{def_adapted_vf}
We denote by
\begin{equation}
L^2_\textrm{ad}(P_xM;\cH)
\end{equation}
the space of all $\Sigma_t$-adapted stochastic processes $v_t: P_xM \to \mathbb{R}^n$ with $\Ex\left[\norm{v}_{\cH}^2\right]<\infty$.
The space of adapted $L^2$-vector fields on path space $P_xM$ is defined by 
\begin{equation}
L^2_{\textrm{ad}}(P_xM;TP_xM):= \left\{ Uv  \, | \, v\in L^2_\textrm{ad}(P_xM;\cH) \right\},
\end{equation}
where
\begin{equation}
(Uv)_t(\gamma):= U_t(\gamma) v_t(\gamma)\in T_{\gamma_t}M
\end{equation}
is the vector field on path space corresponding to $v$. By definition, this gives a bijective map
\begin{equation}\label{bijection_u}
U: L^2_\textrm{ad}(P_xM;\cH) \to L^2_{\textrm{ad}}(P_xM;TP_xM).
\end{equation}
\end{definition}
Using this bijection, we can define the inner product of $V,W\in L^2_{\textrm{ad}}(P_xM;TP_xM)$ by
\begin{equation}
\ip{ V, W}_{\mathcal{H}} := \int_0^\infty \dot v_t \cdot\dot w_t \, dt,
\end{equation}
where $v=U^{-1}V$ and $w=U^{-1}W$ are the associated $\mathbb{R}^n$-valued processes.

\begin{definition}[{derivable vector field}]\label{def_differentiable_vf}
A vector field $V\in L^2_{\textrm{ad}}(P_xM;TP_xM)$ is called derivable if $D_Wv_t$ exists in $L_{\textrm{ad}}^2(P_xM,\cH)$ for all $W\in L^2_{\textrm{ad}}(P_xM;TP_xM)$, where $v=U^{-1}V$.
\end{definition}

In particular all constant vector fields, i.e. vector fields of the form $V=Uh$ for some $h\in\mathcal{H}$, are of course derivable.  The set of derivable vector fields is dense in the space of adapted vector fields.  The following is our Halfway Harnack inequality in the Ricci-flat case:

\begin{theorem}[Halfway Harnack]\label{thmRicflat: quadratic form}
Let $M$ be a Ricci-flat manifold, and let $F\colon P_xM\to\RR^+$ be a nonnegative cylinder function. Then, the quadratic form
\begin{align}
Q_F[V,V]:=\frac{\Ex[D_{V}(D_{V} F)]}{\Ex[F]}-\frac{\Ex[D_{V} F]^2}{\Ex[F]^2}+\frac{\Ex[D_{\nabla^{}_{V}V}F]}{\Ex[F]}+\frac12\frac{\Ex[F\norm{V}_\cH^2]}{\Ex[F]},
\end{align}
is nonnegative for every derivable $V\in L^2_{\textrm{ad}}(P_xM;TP_xM)$. Here, $\nabla^{}$ denotes the Markovian connection (see below).
\end{theorem}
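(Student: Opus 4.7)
The plan is to transfer the proof of the Euclidean result (Theorem \ref{thm_harnack_path_space}) to the manifold via the Ito map $I$, using the Cruzeiro-Malliavin intertwining formula to absorb the curvature corrections that arise into the Markovian connection. Throughout, let $\bar F := F \circ I$ and $\bar v := v \circ I$ denote the pullbacks to $P_0\dR^n$.

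First I would push the problem to Euclidean path space. On $P_0\dR^n$, consider the translation family $\tilde\psi_\eps(\beta) := \beta + \eps \bar v(\beta)$. Since $\bar v$ is adapted, Girsanov's theorem yields
\begin{equation*}
\Eo[\bar F \circ \tilde\psi_\eps] = \Eo\!\left[\bar F \exp\!\left(\tfrac{\eps}{2}\!\int_0^\infty\! \dot{\bar v}_t \cdot d W_t - \tfrac{\eps^2}{4}\!\int_0^\infty\! |\dot{\bar v}_t|^2\, dt\right)\right].
\end{equation*}
Differentiating $\Phi(\eps) := \ln \Eo[\bar F \circ \tilde\psi_\eps]$ twice at $\eps = 0$ using this identity, and applying the Cauchy-Schwarz inequality $\Eo[\bar F A]^2 \leq \Eo[\bar F]\, \Eo[\bar F A^2]$ for the Ito integral $A := \int_0^\infty \dot{\bar v}_t\cdot d W_t$, yields the Euclidean halfway bound
\begin{equation*}
\Phi''(0) + \tfrac{1}{2}\,\frac{\Ex[F\, \norm{V}_{\cH}^2]}{\Ex[F]} \geq 0.
\end{equation*}

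Second, I would reinterpret $\Phi''(0)$ on the manifold side. A direct chain-rule computation gives $\Phi''(0) = \Eo[D_{\bar v}^2 \bar F]/\Eo[\bar F] - \Eo[D_{\bar v}\bar F]^2/\Eo[\bar F]^2$, where $D_{\bar v}^2 \bar F(\beta) := \frac{d^2}{d\eps^2}|_{\eps=0} \bar F(\beta + \eps \bar v(\beta))$ is the straight-line second variation. The intertwining formula \eqref{Bismut covariant derivative} identifies the first-order term with $\Ex[D_V F]$; under the Ricci-flat hypothesis the non-martingale part of the tangent process $v^*$ reduces to $\dot v\, dt$, so Girsanov and intertwining are compatible. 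Iterating the intertwining formula for second derivatives produces anticipating curvature integrals from differentiating the Ito map twice, which are designed to be absorbed precisely by the curvature term in the Markovian connection \eqref{e:d:intro:mark_conn}. The resulting identification is
\begin{equation*}
\Phi''(0) = \frac{\Ex[D_V(D_V F)]}{\Ex[F]} - \frac{\Ex[D_V F]^2}{\Ex[F]^2} + \frac{\Ex[D_{\nabla_V V} F]}{\Ex[F]},
\end{equation*}
and combining with the Euclidean bound from the first step gives $Q_F[V,V] \geq 0$.

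The principal obstacle is the second-order identification in the second step: one must carefully track Stratonovich curvature integrals arising from iterating the intertwining formula, and verify that they match the curvature term in the Markovian connection. This is precisely the content of the paper's observation that the Markovian connection (rather than, e.g., the $L^2$-connection) is the natural choice here; with the $L^2$-connection the analogous anticipating integrals remain uncontrolled and the argument breaks down.
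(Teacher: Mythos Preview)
Your proposal has a genuine gap in the first step. The Girsanov identity you write,
\[
\Eo[\bar F\circ\tilde\psi_\eps] = \Eo\!\left[\bar F\,\exp\!\Big(\tfrac{\eps}{2}\!\int_0^\infty\!\dot{\bar v}_t\cdot dW_t-\tfrac{\eps^2}{4}\!\int_0^\infty\!|\dot{\bar v}_t|^2\,dt\Big)\right],
\]
is the Cameron--Martin formula, valid only for \emph{deterministic} shifts $h\in\cH$; it fails when $\bar v$ is random. What Girsanov actually gives for adapted $\bar v$ is that under the weighted measure the process $\beta-\eps\bar v(\beta)$ is Brownian, i.e.\ $\Eo[\bar F(\beta-\eps\bar v(\beta))\exp(\tfrac{\eps}{2}\!\int\dot{\bar v}\,dW-\tfrac{\eps^2}{4}\|\bar v\|^2)] = \Eo[\bar F]$. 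The map $\beta\mapsto\beta-\eps\bar v(\beta)$ is the inverse of your $\tilde\psi_\eps$ only to first order in $\eps$; at second order their difference is $2\eps^2 D_{\bar v}\bar v + O(\eps^3)$. So your two computations of $\Phi''(0)$ --- one from the right-hand side via Cauchy--Schwarz, one from the left-hand side via straight-line second variation --- do not agree, and the ``Euclidean halfway bound'' does not follow. Concretely, for derivable $V$ there is a stray term $\Eo[\bar F\!\int(D_{\bar v}\dot{\bar v})\,dW]$ that your argument drops with no justification.

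The paper circumvents this entirely by working infinitesimally on the manifold side. Two applications of the integration by parts formula \eqref{eq: ibp}, together with the Cruzeiro--Fang commutator formula $D_V\delta(V)=\delta(\nabla_VV)+\tfrac12\|V\|_\cH^2$ (Proposition \ref{RicFlat commutation}), collapse $Q_F[V,V]$ directly to
\[
Q_F[V,V]=\frac{\Ex[F\,\delta(V)^2]}{\Ex[F]}-\frac{\Ex[F\,\delta(V)]^2}{\Ex[F]^2}\,,
\]
which is nonnegative by Cauchy--Schwarz. The commutator formula is exactly where the curvature term defining the Markovian connection is absorbed --- the same bookkeeping your second step gestures at via ``iterating the intertwining formula,'' but organized so that no finite-$\eps$ change of variables is ever needed and no random-shift Cameron--Martin is invoked. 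If you want to salvage your route, replace the translation $\tilde\psi_\eps$ by the adapted flow $\psi_\eps$ solving $\partial_\eps\psi_\eps = \bar v(\psi_\eps)$; then Girsanov does apply, but the second variation of the flow reproduces the extra $D_{\bar v}\bar v$ terms, and unwinding them is equivalent to proving the commutator formula.
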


Morally speaking, our quadratic form $Q_F$ can be thought of as ``push forward under the Ito map in the sense of adapted differential geometry" of the quadratic form $\nabla^2\Phi_F(0)$ from Remark \ref{rem_quad_form_rn}. To discuss this properly, and as a preparation for the actual proof of Theorem \ref{thmRicflat: quadratic form}, let us start by recalling the Markovian connection as introduced by Cruzeiro-Malliavin \cite{cruzeiromalliavin}.  In the following, we write\footnote{Careful about the switching of the order of $z$ and $w$ between $\cR$ and $\Rm$ below.}
\begin{equation}
\ip{ \cR_s(x,y)w,z}\equiv \Rm_{\gamma_s}\big(U_s(\gamma) x,U_s(\gamma) y,U_s(\gamma) z, U_s(\gamma) w\big),
\end{equation}
where $x,y,z,w\in\mathbb{R}^n$.

\begin{definition}[{Markovian connection, \cite[Sec. III]{cruzeiromalliavin}}]\label{def_mark_conn}
The Markovian connection is defined for constant vector fields via
 \begin{align}\label{defRicFlat:connection}
\frac{\dd}{\dd t}{U_t^{-1}({\nabla^{}_{Uk}Uh})_t}=\int_0^t\cR_s(\circ d W_s,k_s)\,\dot{h}_t,
\end{align}
with initial condition $({\nabla^{}_{Uk}Uh})_0=0$, where $h,k\in\cH$. The definition is then extended to nonconstant vector fields $Uv$ via the Leibniz rule.
\end{definition}
Note that \eqref{defRicFlat:connection} can be solved for ${\nabla^{}_{Uk}Uh}$ by integrating in time and inverting $U_t$.
The connection $\nabla^{}$ is called Markovian, since the right hand side of \eqref{defRicFlat:connection} only depends on the value of $\dot{h}$ at time $t$.

Let us observe that the Stratonovich integral actually agrees with the corresponding Ito integral for Ricci flat spaces:
\begin{lemma}\label{lemma: Stratonovich}
If $M$ is a Ricci-flat manifold, then
\begin{align}
\int_0^t\cR_s(\circ d W_s,k_s) = \int_0^t\cR_s(\dd W_s,k_s).
 \end{align}
\end{lemma}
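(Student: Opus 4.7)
The plan is to convert the Stratonovich integral into an Ito integral and show that the correction term vanishes using the contracted second Bianchi identity. Since $k\in\mathcal{H}$ is deterministic, the Stratonovich-Ito conversion reads
\begin{equation}
\int_0^t\cR_s(\circ \dd W_s,k_s) = \int_0^t\cR_s(\dd W_s,k_s) + \tfrac{1}{2}\sum_a\bigl[\cR_\cdot(e_a,k_\cdot),W^a\bigr]_t,
\end{equation}
where $\{e_a\}$ denotes the standard basis of $\mathbb{R}^n$. Only the dependence of $\cR_s$ on the horizontal lift $U_s$ contributes to the martingale part of the integrand, so the computation of the correction reduces to an application of Ito's formula on the frame bundle.

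To this end I view $\cR_s(x,y)w$ as $\tilde{\cR}(U_s,x,y,w)$, where $\tilde{\cR}\colon FM\times(\mathbb{R}^n)^3\to\mathbb{R}^n$ is the function whose $c$-th component at $u$ equals $\Rm_{\pi(u)}(ux,uy,ue_c,uw)$. Using the SDE \eqref{EEM_eq} together with Ito's formula, the martingale part of $\cR_s(e_a,k_s)w$ is $\sum_b(H_b\tilde{\cR})(U_s,e_a,k_s,w)\,\dd W_s^b$, and the covariance normalization \eqref{eq_normalization} then yields
\begin{equation}
\dd\bigl[\cR_\cdot(e_a,k_\cdot)w,W^a\bigr]_s = 2(H_a\tilde{\cR})(U_s,e_a,k_s,w)\,\dd s.
\end{equation}
After summation, the Stratonovich-Ito correction term becomes $\int_0^t\sum_a(H_a\tilde{\cR})(U_s,e_a,k_s,\cdot)\,\dd s$.

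Finally, since the frame parallel transports along horizontal curves in $FM$, each term $(H_a\tilde{\cR})(u,e_a,k,w)$ has $c$-th component $(\nabla_{ue_a}\Rm)(ue_a,uk,ue_c,uw)$. The sum over $a$ is therefore precisely the contraction of $\nabla\Rm$ on its first slot, and the contracted second Bianchi identity
\begin{equation}
\sum_a(\nabla_{e_a}\Rm)(e_a,Y,Z,W) = \nabla_Z\Ric(Y,W) - \nabla_W\Ric(Y,Z)
\end{equation}
gives zero under the assumption $\Ric\equiv 0$. The main (mild) obstacle is keeping the Stratonovich-Ito bookkeeping, the covariance normalization, and the horizontal-versus-covariant derivatives straight; once that is in place, the contracted Bianchi identity closes the argument in a single line.
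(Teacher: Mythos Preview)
Your argument is correct and follows essentially the same route as the paper: convert Stratonovich to It\^o, compute the correction term via the horizontal derivative of the curvature on the frame bundle, and observe that the resulting contraction $\sum_a H_a R^s_{abcd}=\nabla_c\Ric_{bd}-\nabla_d\Ric_{bc}$ vanishes by the contracted second Bianchi identity together with $\Ric=0$. One small remark: you only need that $k$ has finite variation (which holds for any $k\in\cH$), not that it is deterministic, for its contribution to the bracket to vanish.
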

\begin{proof}
In general, a Stratonovich integral can be converted to an Ito integral by adding a quadratic variation term:
 \begin{align}
  \int_0^tR_{abcd}^s k_s^b\circ d W_s^a=&\int_0^tR_{abcd}^s k_s^b \dd W_s^a
  +\int_0^t \tfrac12 \dd[k^bR_{abcd},W^a]_s
\end{align}
Using Ito calculus, the Bianchi identity, and the condition $\Rc=0$ we compute
\begin{align}  
\tfrac12  \dd[k^bR_{abcd},W^a]_s=k_s^bH_aR^s_{abcd}\dd s =k_s^b(H_cR^s_{bd}-H_dR_{bc}^s)\dd s=0.
 \end{align}
 This proves the lemma.
\end{proof}

It is important to note that for $V\in L^2_{\textrm{ad}}(P_xM;TP_xM)$ and derivable $W\in L^2_{\textrm{ad}}(P_xM;TP_xM)$, we have that $z_t:=U_t^{-1}({\nabla^{}_{V}W})_t$ defines an adapted $\RR^n$-valued process with finite norm $\Ex[\norm{z}_\cH^2]<\infty$. Hence, by Proposition \ref{prop_int_by_parts} (integration by parts) it holds that:
\begin{align}\label{ibp process}
 \Ex[D_{{\nabla^{}_{V}W}}F\, G]=\Ex\left[ -FD_{{\nabla^{}_{V}W}}G +\tfrac{1}{2}FG  \int_0^\infty\ip{  \dot{z}_t, d W_t}\right].
\end{align}

In general, the Markovian connection interacts well with the integration by parts formula. Recall that the integration by parts formula motivates the following definition of divergence.

\begin{definition}[divergence on Ricci flat Manifolds]
The divergence of an adapted vector field $V=Uv\in L^2_{\textrm{ad}}(P_xM;TP_xM)$ on path space of a Ricci-flat manifold is defined by
\begin{align}
\delta(V)=\tfrac12\int_0^\infty \ip{\dot {v}_t, \dd W_t}.
\end{align}
\end{definition}

The following is a very useful algebraic relation:

\begin{proposition}[{commutator formula, \cite[Thm 3.2]{cruzeirofang}}]\label{RicFlat commutation}
Assume that $M$ is Ricci-flat, and let $V,W\in L^2_{\textrm{ad}}(P_xM;TP_xM)$. If $W$ is derivable, then \begin{align}
D_{V}\delta(W)=\delta({\nabla^{}_{V}{W}})+\tfrac12\ip{V,W}_\cH.
\end{align}
\end{proposition}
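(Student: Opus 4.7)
The plan is to transport the computation to flat Wiener space via the intertwining formula \eqref{Bismut covariant derivative} and then to differentiate the defining integral of $\delta(W)$ termwise. Writing $V=Uv$ and $W=Uw$, one has $\delta(W)\circ I=\tfrac12\int_0^\infty\langle\dot w_t,d\bar W_t\rangle$ on $P_0\mathbb{R}^n$, and intertwining gives $D_V\delta(W)\circ I=D_{v^*}[\delta(W)\circ I]$ where $v^*$ is the tangent process of \eqref{Bismut covariant derivative 2}. The first step is to simplify $v^*$: by Lemma \ref{lemma: Stratonovich} the curvature Stratonovich integral equals its Ito version $A_t:=\int_0^t\mathcal{R}_s(dW_s,v_s)$, and the residual Ito--Stratonovich correction for $A_t\circ dW_t$ is a drift whose coefficient is proportional to $\sum_b R^a_{bbd}(t)v_t^d$, vanishing by $\Ric=0$. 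The first Bianchi symmetry $R_{abcd}+R_{bacd}=0$ makes $A_t$ antisymmetric, so $v^*$ is a genuine tangent process in the sense of \eqref{eq: tangent} with antisymmetric diffusion $-A_t$ and drift $\dot v_t$.

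Next, using the flow-based definition \eqref{eq: der}, I would expand $\tfrac12\int\langle\dot w_t(\psi_\epsilon^{v^*}(\beta)),d\psi_\epsilon^{v^*}(\beta)_t\rangle$ to first order in $\epsilon$. This yields three contributions: the variation of the integrand produces $\tfrac12\int\langle D_{v^*}\dot w_t,dW_t\rangle$, which under intertwining is $\tfrac12\int\langle D_V\dot w_t,dW_t\rangle$; the antisymmetric diffusion perturbation of the integrator produces $-\tfrac12\int\langle\dot w_t,A_t\,dW_t\rangle=\tfrac12\int\langle A_t\dot w_t,dW_t\rangle$ after using antisymmetry of $A_t$; and the drift perturbation produces $\tfrac12\int\langle\dot w_t,\dot v_t\rangle\,dt=\tfrac12\langle V,W\rangle_{\mathcal H}$. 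On the other side, Definition \ref{def_mark_conn} together with its Leibniz extension yields $\dot z_t=D_V\dot w_t+A_t\dot w_t$ for $z=U^{-1}(\nabla_V W)$, hence $\delta(\nabla_V W)=\tfrac12\int\langle D_V\dot w_t+A_t\dot w_t,dW_t\rangle$, which matches the first two contributions exactly. Collecting the three terms proves $D_V\delta(W)=\delta(\nabla_V W)+\tfrac12\langle V,W\rangle_{\mathcal H}$.

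The main technical obstacle is justifying the termwise first-order expansion of the stochastic integral: because the integrand $\dot w_t$ depends on the full Brownian path and the tangent perturbation $v^*$ is a semimartingale rather than a Cameron--Martin path, one has to combine the Ito isometry with the antisymmetry of $A_t$ to show that the second-order terms in $\epsilon$ vanish and that the quadratic covariation between $\dot w_t$ and the diffusion part of $d\psi_\epsilon^{v^*}(\beta)_t$ contributes no extra piece. The Ricci-flat hypothesis enters twice in an essential way: it ensures that $v^*$ has no anticipative drift (so it is a bona fide tangent process to which the flow $\psi_\epsilon^{v^*}$ applies), and it enables the antisymmetric pairing that identifies the curvature contribution to $\delta(\nabla_V W)$ with the one arising from the diffusion perturbation of the Brownian integrator.
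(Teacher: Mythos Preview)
Your proposal is correct and follows essentially the same route as the paper: both transport to flat Wiener space via the intertwining formula \eqref{Bismut covariant derivative}, expand $D_{v^*}\!\left(\tfrac12\int\langle\dot w_t,d\bar W_t\rangle\right)$ into the three contributions (integrand variation, antisymmetric diffusion perturbation, drift perturbation), and then identify the first two with $\delta(\nabla_V W)$ via the Markovian connection. One minor clarification: the Ricci-flat hypothesis does not prevent an ``anticipative'' drift in $v^*$ but merely kills the extra adapted drift $\Ric_t v_t\,dt$ (compare \eqref{diffeq_vstar} with the general case in Proposition \ref{commutation}); $v^*$ is a bona fide tangent process regardless.
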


\begin{proof}
Let $v=U^{-1}V$ and $w=U^{-1}W$.
By the intertwining formula \eqref{Bismut covariant derivative} differentiation on $P_xM$ along $V$ can be transformed to differentiation on $P_0\RR^n$ along $v^*$ given by $v^*_0=0$ and
\begin{align}\label{diffeq_vstar}
 \dd v^*_t=\dot v_t\dd t-\int_0^t\cR_s(\circ d W_s,v_s)\dd W_t,
\end{align}
where we replaced $\circ d W_t$ by $\dd W_t$ using the assumption $\Rc=0$, cf. the proof of Proposition \ref{prop_int_by_parts}.

Recall that curves $\beta\in P_0\RR^n$  correspond to curves $\gamma\in P_xM$ via the Ito map $I\colon P_0\RR^n\to P_xM$. The intertwining formula yields
\begin{align}\label{eq: term}
 D_{V}\int_0^\infty\ip{\dot w_t, d W_t}(\gamma)&=D_{v^*}\int_0^\infty\ip{\dot w_t\circ I, d \bar W_t}(\beta)\\
& =\int_0^\infty\ip{D_{v^*}(\dot w_t\circ I), d \bar W_t}(\beta)+\int_0^\infty\ip{\dot w_t\circ I, d D_{v^*} \bar W_t}(\beta).
\end{align}
Using again the intertwining formula, the first integrand can be rewritten as
\begin{align}\label{eq: term3}
D_{v^*}(\dot w_t\circ I)(\beta)=D_V\dot w_t(\gamma).
\end{align}
For the second term, using $\bar W_t(\beta)=\beta_t$ and equation \eqref{eq: der} we compute
\begin{align}\label{eq: term2}
 D_{v^*} \bar W_t(\beta)=&\frac{\dd }{\dd \eps}\Big|_{\eps=0}\left(\int_0^te^{-\eps\int_0^s\cR_r(\circ d W_r(\gamma),v_r(\gamma))}\dd W_s(\gamma)+\eps\int_0^t\dot { v}_s(\gamma)\dd s \right)\\
 =&-\int_0^t\int_0^s\cR_r(\circ d W_r(\gamma),v_r(\gamma))\dd W_s(\gamma)+\int_0^t\dot { v}_s(\gamma)\dd s.
\end{align}
Consequently, combining \eqref{eq: term}, \eqref{eq: term3} and \eqref{eq: term2} and, we conclude that
\begin{align}
2D_{V}\delta(W)=&\int_0^\infty\ip{D_V\dot w_t, d W_t}
-\int_0^\infty\ip{\dot{w}_t,\int_0^t\cR_r(\circ d W_r,v_r)\dd W_t}+\int_0^\infty\ip{\dot{v}_t,\dot { w}_t}\dd t\\
=&\int_0^\infty\ip{D_V\dot w_t, d W_t}+
\int_0^\infty\ip{\int_0^t\cR_r(\circ d W_r,v_r)\dot{w}_t, d W_t}+\ip{V,W}_\cH.
\end{align}
Observing that
\begin{align}
D_V\dot w_t+
\int_0^t\cR_r(\circ d W_r,v_r)\dot{w}_t=\frac{\dd}{\dd t}U_t^{-1}(\nabla_VW)_t,
\end{align}
and recalling the definition of divergence, this proves the proposition.
\end{proof}

Now we are able to check by direct computation that our quadratic form $Q_F$ is nonnegative.

\begin{proof}[{Proof of Theorem \ref{thmRicflat: quadratic form}}]
By scaling we can assume that
\begin{equation}
\Ex[F]=1.
\end{equation}
First, by the usual integration by parts formula \eqref{eq: ibp} we have
\begin{align}
{\Ex\left[D_{V}F\right]}=\Ex\left[F\delta(V)\right].
\end{align}
Second, using the version from \eqref{ibp process} we see that
\begin{align}
\Ex\left[D_{\nabla^{}_{V}V}F\right]=\Ex\left[F \delta(\nabla^{}_{V}V)\right].
\end{align}
Third, applying integration by parts twice and using Proposition \ref{RicFlat commutation} (commutator formula) we obtain
\begin{align}
\Ex\left[D_{V}(D_{V}F)\right]=&\Ex\left[D_{V}F\delta({V})\right]\\
=&\Ex\left[ F\delta({V})^2\right]-\Ex\left[F D_{V}\delta({V})\right]\\
=&\Ex\left[ F\delta({V})^2\right]-\Ex\left[F \delta(\nabla^{}_{V}{V})\right] -\tfrac12\Ex[F\norm{V}_\cH^2].
\end{align}
Combining the above formulas, we conclude that
\begin{equation}
Q_F[V,V]=\Ex\left[ F\delta({V})^2\right]-\Ex\left[ F\delta({V})\right]^2,
\end{equation}
which is indeed nonnegative by the Cauchy-Schwarz inequality. This proves the theorem.
\end{proof}

\subsection{Differential Harnack}\label{sec_PLY_rf}

We can now prove our differential Harnack inequality on path space (Theorem \ref{thmintro:harnack_ricci_flat}), which we restate here for convenience of the reader:

\begin{theorem}[Differential Harnack inequality on path space]\label{thm_harn_rf_restated}
Let $M$ be a Ricci-flat manifold, and let $F:P_xM\to \dR$ be a nonnegative function. Then, for all $\varphi\in H^1_0(\mathbb{R}^+)$ we have the inequality
\begin{align}\label{e:harnack_ricci_flat}
 &\frac{\Ex\left[\Delta_{\varphi}F\right]}{\Ex[F]}
 -\frac{\big|{\Ex\left[\nabla_\varphi F\right]}\big|^2}{\Ex[F]^2}
 +\frac{n}2\norm{\varphi}^2\geq 0.
\end{align}
\end{theorem}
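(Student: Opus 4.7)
The plan is to deduce the theorem from the Halfway Harnack (Theorem \ref{thmRicflat: quadratic form}) by summing over a cleverly chosen orthonormal collection of adapted vector fields. First I will reduce to the case where $F$ is a nonnegative cylinder function by density. Then, fix an orthonormal basis $\{e_1,\ldots,e_n\}$ of $T_xM$ and let $E_i$ denote the vector field on $P_xM$ obtained by parallel translating $e_i$ along each path. For each $i$, set
\begin{equation}
V_i := \varphi E_i,
\end{equation}
equivalently $v_i := U^{-1}V_i = \varphi e_i \in \cH$, which is deterministic and hence trivially derivable in the sense of Definition \ref{def_differentiable_vf}. I will apply the Halfway Harnack to each $V_i$ and sum the resulting inequalities.

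The central computation is that the Markovian ``acceleration'' of $V_i$ summed over $i$ vanishes under $\Ric=0$. Using the defining formula \eqref{defRicFlat:connection}, one has
\begin{equation}
\frac{d}{dt}U_t^{-1}(\nabla_{V_i}V_i)_t = \dot{\varphi}(t)\int_0^t \cR_s(\circ dW_s,\varphi(s)e_i)\, e_i,
\end{equation}
and the identity $\sum_i \langle\cR_s(x,e_i)e_i,z\rangle = \Ric_{\gamma_s}(U_sx,U_sz)$ shows that the sum over $i$ is identically zero when $\Ric\equiv 0$. Since $\nabla_{V_i}V_i$ vanishes at $t=0$, this gives $\sum_i \nabla_{V_i}V_i = 0$, and in particular $\sum_i D_{\nabla_{V_i}V_i}F = 0$. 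Combined with the Hessian identity $D_V(D_VF) = \Hess F(V,V)+D_{\nabla_V V}F$, this yields
\begin{equation}
\sum_{i=1}^n D_{V_i}(D_{V_i}F) = \sum_{i=1}^n \Hess F(\varphi E_i,\varphi E_i) = \Delta_\varphi F.
\end{equation}

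It then remains to identify the remaining terms. By definition of the $\varphi$-gradient, $D_{V_i}F = \langle \nabla_\varphi F, e_i\rangle$, so
\begin{equation}
\sum_{i=1}^n \Ex[D_{V_i}F]^2 = \sum_{i=1}^n \langle \Ex[\nabla_\varphi F], e_i\rangle^2 = \bigl|\Ex[\nabla_\varphi F]\bigr|^2.
\end{equation}
Since $\|V_i\|_\cH^2 = \|\varphi\|^2$ is deterministic, the curvature-free term contributes $\sum_i \tfrac12 \Ex[F\|V_i\|_\cH^2]/\Ex[F] = \tfrac{n}{2}\|\varphi\|^2$. Summing $Q_F[V_i,V_i]\geq 0$ over $i$ therefore collapses to exactly the desired inequality \eqref{e:harnack_ricci_flat}.

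The only nontrivial step is the vanishing of $\sum_i\nabla_{V_i}V_i$, which is precisely where the Ricci-flat hypothesis is used; that the Stratonovich integral can be replaced by an Ito integral in this identification (Lemma \ref{lemma: Stratonovich}) is also Ricci-flat specific, and ensures the trace identity is clean. Everything else is bookkeeping: choosing the orthonormal frame of adapted vector fields built from $\varphi$, and matching scalar identities to the definitions of $\nabla_\varphi$ and $\Delta_\varphi$.
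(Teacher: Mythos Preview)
Your proposal is correct and follows essentially the same approach as the paper: apply the Halfway Harnack (Theorem~\ref{thmRicflat: quadratic form}) to the vector fields $V_i=\varphi E_i$ built from an orthonormal frame, then sum over $i$ and use $\Ric=0$ to kill the Markovian connection term. The only cosmetic difference is that you argue $\sum_i \nabla_{V_i}V_i=0$ at the level of the vector field itself, whereas the paper instead writes out $D_{\nabla_{V^a}V^a}F$ explicitly via the parallel gradient and then traces; both routes use the same identity $\sum_i \cR_s(\cdot,e_i)e_i=\pm\Ric_s(\cdot)=0$, and your observation that Lemma~\ref{lemma: Stratonovich} is not strictly needed for the trace to vanish is accurate.
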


\begin{proof} Let $F(\gamma)=f(\gamma_{t_1},\cdots,\gamma_{t_k})$ be a nonnegative cylinder function. By scaling we can assume that
\begin{equation}
\Ex[F]=1.
\end{equation}
By Theorem \ref{thmRicflat: quadratic form} (Halfway Harnack) and the definition of the Markovian Hessian we have
\begin{align}
Q_F[V,V]=\Ex\left[\Hess^{} F(V,V)\right]-\Ex\left[D_{V} F\right]^2+2\Ex\left[D_{\nabla^{}_{V}V}F\right]+\tfrac12\norm{V}_\cH^2\geq 0.
\end{align}
for all derivable vector fields $V\in L^2_{\textrm{ad}}(P_xM;TP_xM)$. In particular, we can apply this to $V^a$ corresponding to the process $v^a_t=\varphi_t e_a$, where $e_a\in T_xM$ is an orthonormal basis. By definition of the $\varphi$-gradient we have
\begin{equation}
D_{V^a} F=\ip{ \nabla_\varphi F,e_a}\, ,
\end{equation}
and by definition of the $\varphi$-laplacian we have 
\begin{equation}
\Delta_\varphi F = \sum_{a=1}^n\Hess^{} F(V^a,V^a).
\end{equation}

Using the formula
\begin{equation}
D_{Uw}F= \int_0^\infty \ip{\nabla_t^\parallel F, \dot{w}_t} \dd t,
\end{equation}
together with Definition \ref{def_mark_conn} (Markovian connection) and Lemma \ref{lemma: Stratonovich}, we infer that
 \begin{align}
 D_{\nabla^{}_{V^a} V^a} F&=\int_0^\infty\ip{\nabla_t^\parallel F,\int_0^t\cR_s( \dd W_s,\varphi_se_a)\dot\varphi_te_a}\dd t.
 \end{align}
Hence, summing over $a$ and using that $\Ric=0$ we conclude
\begin{equation}
\sum_{a=1}^n Q_F[V^a,V^a]= \Ex\left[\Lap_\varphi F \right]-\left| \Ex\left[\nabla_\varphi F\right]\right|^2+\frac{n}{2}\norm{\varphi}^2\geq 0.
\end{equation}
This proves the theorem.\\
\end{proof}

\subsection{Differential Matrix Harnack}\label{sec_PMH_rf}

In this section, we prove the Matrix Harnack inequality on path space in the Ricci-flat case:

\begin{theorem}[Differential Matrix Harnack inequality on path space, Ricci-flat case]
Let $M$ be a Ricci-flat manifold, and let $F:P_xM\to\RR^+$ be a nonnegative $\Sigma_T$-measurable function on path space. 
Then, for every $\varphi\in H_0^1(\mathbb{R}^+)$ we have
 \begin{align}
\frac{\Ex[\Hess^{}_{\varphi} F]}{\Ex[F]}
-\frac{\Ex[\nabla_\varphi F]\otimes \Ex[\nabla_\varphi F] }{\Ex[F]^2}&+\frac{g_x}{2}\left( 1+C_T(\Rm)\frac{\Ex[F^2]^{1/2}}{\Ex[F]}\right) \norm{\varphi}^2
\geq 0,
\end{align}
where $C_T(\Rm)<\infty$ is a constant, which converges to $0$ as $\abs{\Rm}\to0$ assuming that $T$ stays bounded.
\end{theorem}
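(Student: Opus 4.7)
The plan is to rerun the proof of the scalar Harnack inequality (Theorem~\ref{thm_harn_rf_restated}) without tracing over a frame. In that proof, summing over an orthonormal basis caused the Markovian curvature term $\Ex[D_{\nabla_V V} F]$ to vanish because of $\Ric=0$; here, keeping a single direction $v\in T_xM$, the analogous term survives and must be bounded directly by the $\Ex[F^2]^{1/2}$--type error in the statement.

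To set this up, fix $v\in T_xM$ and let $V^v:=U(\varphi v)\in L^2_{\textrm{ad}}(P_xM;TP_xM)$; this vector field is derivable with $\|V^v\|_\cH^2=|v|^2\|\varphi\|^2$ deterministic. Applying the Halfway Harnack (Theorem~\ref{thmRicflat: quadratic form}) to $V^v$ and expanding $D_{V^v}(D_{V^v}F)=\Hess F(V^v,V^v)+D_{\nabla_{V^v}V^v}F$ yields, after dividing by $\Ex[F]$,
\begin{equation*}
\frac{\Ex[\Hess_\varphi F](v,v)}{\Ex[F]}-\frac{\langle\Ex[\nabla_\varphi F],v\rangle^2}{\Ex[F]^2}+\frac{2\,\Ex[D_{\nabla_{V^v}V^v}F]}{\Ex[F]}+\frac{|v|^2\|\varphi\|^2}{2}\geq 0.
\end{equation*}
Since $v$ is arbitrary and the quadratic form of a symmetric tensor determines it, this is exactly the desired matrix inequality evaluated at $(v,v)$, modulo the curvature error $2\,\Ex[D_{\nabla_{V^v}V^v}F]/\Ex[F]$. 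The theorem therefore reduces to proving
\begin{equation*}
\bigl|\Ex[D_{\nabla_{V^v}V^v}F]\bigr|\leq C_T(\Rm)\,|v|^2\|\varphi\|^2\,\Ex[F^2]^{1/2},
\end{equation*}
for a constant $C_T(\Rm)\to 0$ as $|\Rm|\to 0$ with $T$ bounded.

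For this estimate, integration by parts (Proposition~\ref{prop_int_by_parts}, which applies thanks to $\Ric=0$) gives $\Ex[D_{\nabla_{V^v}V^v}F]=\Ex[F\cdot\delta(\nabla_{V^v}V^v)]$, so by Cauchy--Schwarz it suffices to estimate $\Ex[\delta(\nabla_{V^v}V^v)^2]^{1/2}$. Setting $z_t:=U_t^{-1}(\nabla_{V^v}V^v)_t$, Definition~\ref{def_mark_conn} combined with Lemma~\ref{lemma: Stratonovich} (converting Stratonovich to It\^o via Ricci-flatness) gives $\dot z_t=M_t\,\dot\varphi_t v$ with $M_t:=\int_0^t\cR_s(dW_s,\varphi_s v)$ an It\^o martingale. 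One application of the It\^o isometry to $M_t$, together with the pointwise bound $|\cR|\leq|\Rm|_\infty$ and $\varphi_s^2\leq s\|\varphi\|^2$ (a consequence of Cauchy--Schwarz and $\varphi(0)=0$), gives $\Ex[|M_t|^2]\lesssim nt^2|\Rm|_\infty^2|v|^2\|\varphi\|^2$; a second application to $\delta(\nabla_{V^v}V^v)=\tfrac12\int\langle\dot z_t,dW_t\rangle$ then yields $\Ex[\delta(\nabla_{V^v}V^v)^2]\lesssim T^2|\Rm|_\infty^2|v|^4\|\varphi\|^4$, where only $t\leq T$ contributes thanks to the $\Sigma_T$-measurability of $F$ (which forces $\nabla_t^\parallel F=0$ for $t>T$). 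This gives the required bound with $C_T(\Rm)$ proportional to $T|\Rm|_\infty$.

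The main obstacle is precisely this stochastic estimate: one must apply the It\^o isometry to a matrix-valued martingale nested inside another It\^o integral while retaining the $\|\varphi\|^2$ (rather than $\|\varphi\|_\infty^2$) scaling, for which the Sobolev bound $\varphi_s^2\leq s\|\varphi\|^2$ is the essential input. The rest, including extending from cylinder functions to general nonnegative $\Sigma_T$-measurable $F$ by density, and absorbing numerical constants into $C_T(\Rm)$, is routine.
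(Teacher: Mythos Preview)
Your proposal is correct and follows essentially the same route as the paper: apply the Halfway Harnack (Theorem~\ref{thmRicflat: quadratic form}) to $V=U(\varphi v)$ for a fixed direction $v$, rewrite using the Markovian Hessian, then control the surviving term $\Ex[D_{\nabla_V V}F]=\Ex[F\,\delta(\nabla_V V)]$ via Cauchy--Schwarz and the It\^o isometry estimate $\Ex[\delta(\nabla_V V)^2]\leq \int_0^T\Ex\bigl[|\int_0^t\cR_s(dW_s,\varphi_s v)|^2\bigr]|\dot\varphi_t|^2\,dt\leq C_T(\Rm)$, exactly as the paper does, using the same Sobolev bound $|\varphi_s|\leq s^{1/2}\|\varphi\|$. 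The only cosmetic differences are that the paper normalizes $\Ex[F]=\|\varphi\|=|v|=1$ up front, and that your justification for cutting the stochastic integral at $T$ (via $\nabla_t^\parallel F=0$ for $t>T$) is phrased in terms of the pre-IBP expression; it is perhaps cleaner to note that one may take $\dot\varphi\equiv 0$ on $(T,\infty)$ without loss of generality, but your conclusion is correct either way.
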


\begin{proof}Let $F=f(\gamma_{t_1},\cdots, \gamma_{t_k})$ be a cylinder function. By scaling we can assume that
\begin{equation}
\Ex[F]=1\, \textrm{ and }\, ||\varphi||=1.
\end{equation}
Let $v\in\RR^n$ be any unit vector. Arguing similarly as in the proof of Theorem \ref{thm_harn_rf_restated}, we see that
\begin{equation}
\left(\Ex\left[\Hess^{}_{\varphi} F \right]- \Ex\left[\nabla_\varphi F\right]\otimes \Ex\left[\nabla_\varphi F\right] +\frac{g_x}{2}  \right)(v,v)\geq 
-2\Ex\left[ F\delta\left( \nabla^{}_VV \right)\right].
\end{equation}
Using Ito's isometry and the bound $|\varphi_s|\leq s^{1/2}$, we can estimate 
\begin{align}
\Ex\left[ \delta\left( \nabla^{}_VV \right)^2\right]
 \leq  \int_0^T\Ex\left[\left| \int_0^t\cR_s( \dd W_s,\varphi_sv) \right|^2\right]\abs{\dot \varphi_t}^2\dd t\leq C_T(\Rm)\, .
\end{align}
Together with the Cauchy Schwarz inequality, this implies the assertion.
\end{proof}

\bigskip

\subsection{Differential Harnack in terms of $L^2$-Laplacian}\label{sec_harnack_L2}

The goal of this section is to relate the Markovian Hessian and the $L^2$-Hessian, as needed for Corollary \ref{corintro:harnack_ricci_flat}.  The following notions of gradient of vector fields will play the dominant roles:
 \begin{align}
  &\nabla^{{\mathcal{L}}}		\text{ the $L^2$-connection}\, ,\notag\\
 &\nabla	\text{ the Markovian connection}.
 \end{align}
Here, the $L^2$-connection is the Levi-Civita connection of the $L^2$-inner product, and the Markovian connection is as in Definition \ref{def_mark_conn}.
These connections on the space of vector fields naturally induce Hessians on the space of functions by the formulas:
 \begin{align}\label{e:hessians}
 &\Hess^{\mathcal{L}} F(V,W) \equiv D_V(D_W F) - D_{\nabla^{\mathcal{L}}_V W} F\, ,\notag\\
 &	\Hess^{} F(V,W) \equiv D_V(D_W F) - D_{\nabla^{}_V W} F\, .
 \end{align}
Our goal is now to relate the two induced Hessians. Namely, we will show that
\begin{equation}\label{eq_rel_hessians}
\Hess^{} F(Uh,Uh)=\Hess^{\mathcal{L}} F(Uh,Uh) +\int_0^\infty\ip{\nabla^\parallel_t F,\cR_t(\circ d W_t,h_t)h_{t}}.
\end{equation}
To prove this, we start by expressing the $L^2$-Hessian in terms of the parallel Hessians.

 \begin{lemma}[$L^2$-Hessian and Parallel Hessian]\label{lemma_par_hess}
Let $F:P_xM\to \dR$ be a function on path space, and let $\Hess^{\mathcal{L}}  F$ its $L^2$-Hessian as defined above. Then for any $h,k\in \cH$ we have
\begin{align}
\Hess^{\mathcal{L}}  F(Uh,Uk) = \int_0^\infty\!\!\int_0^\infty\ip{\nabla_s^\parallel \nabla_t^\parallel F,\dot h_s\otimes\dot k_t}\dd s\dd t.	
\end{align}
\end{lemma}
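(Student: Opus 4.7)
My plan is to reduce to cylinder functions $F(\gamma) = f(\gamma_{t_1},\ldots,\gamma_{t_k})$ by density, and then compute both sides explicitly using the frame bundle formalism from Section~\ref{sec_wiener_stoch_par}. Let $\tilde f:(FM)^k\to \dR$ denote the lift of $f$ and $H^{(j)}$ the $\RR^n$-valued tuple of horizontal vector fields acting on the $j$-th factor. By \eqref{t_par_easy} (in its frame bundle form used in the proof of Lemma~\ref{lemma_gradients}) the parallel gradient is $\nabla_{s}^\parallel F = \sum_{t_j > s} H^{(j)} \tilde f$, evaluated along the stochastic horizontal lift $(U_{t_1},\ldots,U_{t_k})$. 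Interpreting this as an $\RR^n$-valued function and applying the same formula to each component --- which is legitimate because pulling back via the Ito map turns iterated parallel gradients on $P_xM$ into iterated horizontal derivatives on $P_u FM$ --- should yield
\begin{align}
\nabla_{s_1}^\parallel \nabla_{s_2}^\parallel F = \sum_{t_i > s_1,\, t_j > s_2} H^{(i)} H^{(j)} \tilde f.
\end{align}
Integrating against $\dot h_{s_1}\otimes \dot k_{s_2}$ and using the fundamental theorem of calculus (with $h_0=k_0=0$) collapses the outer integrals, leaving the finite sum $\sum_{i,j}\langle H^{(i)} H^{(j)} \tilde f,\, h_{t_i}\otimes k_{t_j}\rangle$.

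For the left-hand side, I would use the variational characterization of $\Hess^{\mathcal{L}}$ recorded just after \eqref{e:hessians}: the family $s\mapsto\gamma_s(\tau):=\exp_{\gamma(\tau)}(s\,U_\tau(\gamma)h_\tau)$ is an $L^2$-geodesic because each $\tau$-slice is a geodesic in $M$, so $\Hess^{\mathcal{L}} F(Uh, Uh) = \tfrac{d^2}{ds^2}\big|_{s=0} F(\gamma_s)$. Since all pointwise geodesic accelerations vanish, only the pure second-order terms survive, and this reduces to
\begin{align}
\Hess^{\mathcal{L}} F(Uh, Uh) = \sum_{i,j}\nabla^{(i)}\nabla^{(j)} f\big(U_{t_i} h_{t_i},\, U_{t_j} h_{t_j}\big),
\end{align}
where for $i=j$ the expression is the Riemannian Hessian of $f$ in its $i$-th variable and for $i\neq j$ it is the ordinary mixed partial derivative. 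Polarizing in $h, k$ then yields the corresponding identity for $\Hess^{\mathcal{L}} F(Uh, Uk)$.

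The final step is to match the two expressions, which reduces to the frame bundle identity
\begin{align}
(H_a^{(i)} H_b^{(j)} \tilde f)(U_{t_1},\ldots,U_{t_k}) = \nabla^{(i)}\nabla^{(j)} f\big(U_{t_i} e_a,\, U_{t_j} e_b\big).
\end{align}
For $i\neq j$ this is immediate since $H^{(i)}$ and $H^{(j)}$ act on disjoint factors and trivially commute. The genuine content lies in the diagonal case $i=j$, where I would invoke the classical identity $H_a H_b \tilde g(u) = \Hess g(u e_a, u e_b)$ for any smooth function $g$ on $M$. I expect this diagonal case to be the main conceptual point: it relies on the fact that horizontal integral curves on $FM$ project to geodesics on $M$, which is precisely the feature ensuring that the parallel gradient couples correctly to the $L^2$-geometry of path space.
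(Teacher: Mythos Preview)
Your proposal is correct and follows essentially the same strategy as the paper: reduce to cylinder functions, express both sides via iterated horizontal derivatives $H^{(i)}H^{(j)}\tilde f$ on $(FM)^k$, and match via the telescoping sum $\sum_{\ell\geq i,\,m\geq j}(\cdot)(h_{t_i}-h_{t_{i-1}})\otimes(k_{t_j}-k_{t_{j-1}})$. The only cosmetic difference is that the paper asserts the identity $\Hess^{\mathcal{L}}F(Uh,Uk)=\sum_{\ell,m}H^{(\ell)}_a H^{(m)}_b\tilde f\,h^a_{t_\ell}k^b_{t_m}$ directly by analogy with the gradient computation, whereas you derive it via the geodesic variation $\gamma_s(\tau)=\exp_{\gamma(\tau)}(sU_\tau h_\tau)$ and the classical frame bundle identity $H_aH_b\tilde g=\Hess g(ue_a,ue_b)$; this extra step is correct and arguably makes the diagonal case $i=j$ more transparent.
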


\begin{proof}
The proof is a more involved version of the relationship between the $L^2$-gradient and the parallel gradient from the preliminaries section.  To begin, note that for any cylinder function
\begin{equation}
F(\gamma)=f(\gamma_{t_1},\ldots,\gamma_{t_k}),
\end{equation}
using the horizontal vector fields on $FM^k$, we can compute
\begin{align}
\int_{t_i}^{t_{i-1}}\!\!\int_{t_j}^{t_{j-1}}\ip{\nabla_s^\parallel \nabla_t^\parallel F,\dot h_s\otimes\dot k_t}\dd s\dd t = \sum_{\ell\geq i,m\geq j} H^{(\ell)}_aH^{(m)}_b\tilde{f}\,	( h^a_{t_{i}}-h^a_{t_{i-1}} ) ( h^b_{t_{j}}-h^b_{t_{j-1}} )\, .
\end{align}
Now, similarly as in the $L^2$-gradient computation from the preliminaries section, we can use the horizontal vector fields to compute the $L^2$-Hessian:
\begin{align}
\Hess^{\mathcal{L}} F(Uh,Uk) & = \sum_{\ell,m} 	H^{(\ell)}_{a}H^{(m)}_{b} \tilde{f}\, h^a_{t_{\ell}} k^b_{t_{m}}\notag\\
&=\sum_{\ell\geq i,m\geq j} H^{(\ell)}_aH^{(m)}_b\tilde{f}\,	( h^a_{t_{i}}-h^a_{t_{i-1}} ) ( h^b_{t_{j}}-h^b_{t_{j-1}} )\notag\\
&=\sum_{i,j}\int_{t_i}^{t_{i-1}}\!\!\int_{t_j}^{t_{j-1}}\ip{\nabla_s^\parallel \nabla_t^\parallel F,\dot h_s\otimes\dot k_t}\dd s\dd t \notag\\
&=\int_0^\infty\int_0^\infty\ip{\nabla_s^\parallel \nabla_t^\parallel F\dot h_s,\dot h_t}\dd s\dd t\, ,
\end{align}
which, by density of cylinder functions, completes the proof of the lemma.
\end{proof}

We will now prove the formula \eqref{eq_rel_hessians}, relating the $L^2$-Hessian and the Markovian-Hessian:

\begin{proposition}[Markovian Hessian and $L^2$-Hessian]\label{prop: mark hess}
Let $F\colon P_xM\to\RR$ be a cylinder function, and let $V_t=U_th_t$, where $h\in\cH$. Then
 \begin{align}\label{eq: mark hess}
\Hess^{} F(V,V)=\Hess^{\mathcal{L}}  F(V,V) +\int_0^\infty\ip{\nabla^\parallel_t F,\cR_t(\circ d W_t,h_t)h_{t}}.
 \end{align}

\end{proposition}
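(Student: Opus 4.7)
The plan is to reduce the claim to an identity about the difference of the two connections applied to $V$ in direction $V$, compute that difference as a pure curvature term, and finally convert this to a directional derivative of $F$ using the parallel-gradient calculus.

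First, from the definitions of the two Hessians in \eqref{e:hessians}, subtracting gives immediately
\begin{equation*}
\Hess F(V,V)-\Hess^{\mathcal{L}} F(V,V)=D_{\nabla^{\mathcal{L}}_V V-\nabla_V V}F,
\end{equation*}
so the entire statement reduces to computing $\nabla^{\mathcal{L}}_V V-\nabla_V V$ and identifying the resulting directional derivative with the Stratonovich integral on the right-hand side of \eqref{eq: mark hess}.

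Second, I would invoke the unified description \eqref{e:generalizations_connections:1}: the Markovian connection corresponds to $\mathcal{A}^{}_t(\gamma,V)=\int_0^t P_s\Rm_{\gamma_s}(V_s,\dot\gamma_s)\,ds$, while the $L^2$-connection adds the pointwise term $\Rm_{\gamma_t}(V_t,\dot\gamma_t)$. For the constant-in-$\gamma$ direction $V=Uh$ with $h\in\mathcal{H}$, one has $D_V\dot h_t=0$, so the ODEs defining $\nabla_V V$ and $\nabla^{\mathcal{L}}_V V$ coincide except for this extra pointwise piece. Pulling back by $U_t$ and using the identification $\dot\gamma_t\,dt=U_t\circ dW_t$, I obtain
\begin{equation*}
\frac{d}{dt}U_t^{-1}\bigl(\nabla^{\mathcal{L}}_V V-\nabla_V V\bigr)_t\,dt=\mathcal{R}_t(\circ dW_t,h_t)\,h_t,
\end{equation*}
after normalizing with the antisymmetry $\mathcal{R}_t(x,y)=-\mathcal{R}_t(y,x)$ to match signs.

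Third, applying Lemma \ref{lemma_gradients} (in its natural pathwise/Stratonovich extension to adapted directions) to the vector field with $\mathbb{R}^n$-velocity $\mathcal{R}_t(\circ dW_t,h_t)h_t$ yields
\begin{equation*}
D_{\nabla^{\mathcal{L}}_V V-\nabla_V V}F=\int_0^\infty\langle\nabla_t^\parallel F,\mathcal{R}_t(\circ dW_t,h_t)h_t\rangle,
\end{equation*}
which combined with step one gives \eqref{eq: mark hess}. By density of cylinder functions and the continuity of both sides in $F$, this suffices.

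The main obstacle is rigorously justifying the two ingredients used above: first, verifying the stated $\mathcal{A}$ for the $L^2$-connection (the Cruzeiro--Malliavin computation gives this for the Markovian one, but one must either cite or redo the Koszul-style computation for $\nabla^{\mathcal{L}}$, keeping track of the extra $\Rm_{\gamma_t}(V_t,\dot\gamma_t)$ arising from differentiating the $L^2$-metric); and second, making sense of the parallel-gradient pairing against the Stratonovich differential $\mathcal{R}_t(\circ dW_t,h_t)h_t$, which requires treating the resulting adapted $\mathbb{R}^n$-valued semimartingale direction as in Section \ref{sec_int_by_parts} and confirming that $\nabla_t^\parallel F$ pairs against it consistently with the cylinder-function computation. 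Both points are essentially bookkeeping once the appropriate frame-bundle picture is set up, and a sign check via the antisymmetry of $\mathcal{R}$ pins down the final orientation.
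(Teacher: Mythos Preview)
Your outline is correct and reaches the same formula, but it is organized differently from the paper's proof, and the step you call ``bookkeeping'' is in fact the substantive computation.

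The paper does not start from the abstract difference of Hessians. Instead, it works directly with a cylinder function $F(\gamma)=f(\gamma_{t_1},\dots,\gamma_{t_k})$, passes to smooth curves, and computes the iterated derivative
\[
D_V(D_VF)=\sum_{i,j}\langle\nabla^{(i)}\nabla^{(j)}f,\,V_{t_i}\otimes V_{t_j}\rangle+\sum_i\langle\nabla^{(i)}f,\,\nabla_{V_{t_i}}V_{t_i}\rangle,
\]
where $\nabla_{V_{t_i}}V_{t_i}$ is the ordinary covariant derivative on $M$. The first sum is identified with $\Hess^{\mathcal L}F(V,V)$ via Lemma~\ref{lemma_par_hess}. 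For the second sum, the paper computes $\nabla_{V_t}(u_te_a)=u_t\int_0^t\mathcal R_s(\dot\beta_s,h_s)e_a\,ds$ from first principles (differentiating the horizontal lift), then uses the algebraic splitting $h_{t_i}=(h_{t_i}-h_t)+h_t$ inside the resulting integral $\int_0^{t_i}\mathcal R_t(\circ dW_t,h_t)h_{t_i}$. The $(h_{t_i}-h_t)$ piece reassembles into $D_{\nabla_VV}F$, and the $h_t$ piece gives the anticipating integral on the right of~\eqref{eq: mark hess}.

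Your route---subtract the two Hessian definitions and compute $\nabla^{\mathcal L}_VV-\nabla_VV$ directly---is conceptually cleaner, and the identity you write for the difference is correct. But the formula for $\nabla^{\mathcal L}$ you invoke from~\eqref{e:generalizations_connections:1} is only stated heuristically in the introduction; it is not proved independently anywhere in the paper, and establishing it rigorously requires exactly the frame-bundle computation of $\nabla_{V_t}(u_te_a)$ that the paper performs here. So what you defer as bookkeeping is the actual content. Your second deferred point (pairing $\nabla_t^\parallel F$ against a Stratonovich differential) is handled in the paper by the simple observation that for cylinder functions the anticipating integral is a finite sum $\sum_i\langle u_{t_i}^{-1}\nabla^{(i)}f,\,\int_0^{t_i}\mathcal R_t(\circ dW_t,h_t)h_t\rangle$ of ordinary adapted integrals, so no genuine anticipating calculus is needed.
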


\begin{remark}
The integral in \eqref{eq: mark hess} is an anticipating integral, but since $F(\gamma)=f(\gamma_{t_1},\ldots,\gamma_{t_k})$ is a cylinder function it can simply be expressed as a finite sum of usual non-anticipating integrals:
\begin{align}
\int_0^\infty\ip{\nabla^\parallel_t F,\cR_t(\circ d W_t,h_t)h_{t}}
=\sum_i\ip{ u_{t_i}^{-1}\nabla^{(i)} f,\int_0^{t_i} \cR_t(\circ d W_t,h_t)h_{t}}.
\end{align}
\end{remark}

\begin{proof}
Let $F(\gamma)=f(\gamma_{t_1},\ldots,\gamma_{t_k})$ be a cylinder function, where $0<t_1<\ldots<t_k$. We will first compute in the smooth setting and appeal to the transfer principle later. So let $\gamma_t$ be a smooth curve in $M$ starting at $x$, with horizontal lift $u_t$ and anti-development $\beta_t$. Let $\gamma_t^\eps$ be a smooth variation of $\gamma_t$ with fixed initial point such that
 \begin{align}
  \frac{d}{d\eps}\Big|_{\eps=0}\gamma_t^\eps=u_t h_t=:V_t.
 \end{align}
Let $u_t^\eps$ be the horizontal lift of $\gamma_t^\eps$. We compute
\begin{align}
 D_V(D_V F)&=\frac{d}{d\eps}\Big|_{\eps=0} \sum_{i=1}^k \nabla^{(i)}_{u_{t_i}^\eps h_{t_i}} f (\gamma_{t_1}^\eps,\ldots,\gamma_{t_k}^\eps)\\
 &=\sum_{i,j=1}^k\ip{ \nabla^{(i)}\nabla^{(j)}f, V_{t_i}\otimes V_{t_j}}_{T_{\gamma_{t_i}}M\otimes T_{\gamma_{t_j}}M}+ \sum_{i=1}^k \ip{  \nabla^{(i)}f , \nabla_{V_{t_i}}V_{t_i}}_{T_{\gamma_{t_i}}M} .
\end{align}
By Lemma \ref{lemma_par_hess} ($L^2$-Hessian and parallel Hessian) the first term is given by
\begin{equation}
\sum_{i,j=1}^k\ip{ \nabla^{(i)}\nabla^{(j)}f, V_{t_i}\otimes V_{t_j}}_{T_{\gamma_{t_i}}M\otimes T_{\gamma_{t_j}}M}=\mathrm{Hess}^{\mathcal{L}} F(V,V).
\end{equation}
To compute the second term, note that by definition of the horizontal lift we have
\begin{align}
 \nabla_{\dot\gamma_{t}}(u_te_a)=0,
\end{align}
hence
\begin{align}
 \nabla_{\dot\gamma_{t}}\nabla_{V_{t}}(u_te_a)=\mathcal{R}(\dot\gamma_{t},V_{t})(u_te_a).
\end{align}
Through integration this implies
\begin{align}
 \nabla_{V_{t_i}}(u_{t_i} e_a)=&\int_0^{t_i} P_t^{t_i}(\gamma)\mathcal{R}(\dot \gamma_t,V_t)(u_te_a)\dd t\\
 =&\int_0^{t_i} u_{t_i} \cR_t(\dot \beta_t,h_t)e_a \dd t,
\end{align}
where $P_t^{t_i}=u_{t_i}\circ u_t^{-1}$ denotes the parallel transport along $\gamma$ from $T_{\gamma_t}M$ to $T_{\gamma_{t_i}}M$. Thus, we get
\begin{align}
\sum_{i=1}^k \ip{  \nabla^{(i)}f , \nabla_{V_{t_i}}V_{t_i}}_{T_{\gamma_{t_i}}M} =\sum_i\ip{ u_{t_i}^{-1}\nabla^{(i)} f,\int_0^{t_i} \cR_t(\dot{\beta}_t ,h_t) dt\, h_{t_i}}_{\mathbb{R}^n}.
\end{align}
Putting things together and using the transfer principle (see \cite{Stroock}) we obtain
\begin{align}
 D_V(D_V F)=\mathrm{Hess}^{\mathcal{L}}F(V,V)+\sum_{i=1}^k\ip{ P_{t_i}\nabla^{(i)} f,\int_0^{t_i} \cR_t(\circ d W_t,h_t)h_{t_i}}_{\RR^n}.
\end{align}
The curvature term can be rewritten as
\begin{multline}
 \sum_i\ip{ P_{t_i}\nabla^{(i)} f,\int_0^{t_i} \cR_t(\circ d W_t,h_t)h_{t_i}}_{\RR^n}\\
 =\sum_i\ip{ P_{t_i}\nabla^{(i)} f,\int_0^{t_i} \cR_t(\circ d W_t,h_t)(h_{t_i}-h_t)}_{\RR^n}
 +\sum_i\ip{ P_{t_i}\nabla^{(i)} f,\int_0^{t_i} \cR_t(\circ d W_t,h_t)h_{t}}_{\RR^n}.\label{eq: error term}
\end{multline} 
For the first term in \eqref{eq: error term} we find by recalling the definition of the Markovian connection
\begin{align}
\sum_i\ip{ P_{t_i}\nabla^{(i)} f,\int_0^{t_i} \cR_t(\circ d W_t,h_t)(h_{t_i}-h_t)}_{\RR^n}
=\sum_i\ip{ P_{t_i}\nabla^{(i)} f,\int_0^{t_i}\int_0^{t_i}1_{[t,t_i]}(s) \cR_t(\circ d W_t,h_t)\dot h_s\dd s}_{\RR^n}\\
=\sum_i\ip{ P_{t_i}\nabla^{(i)} f,\int_0^{t_i}\int_0^{s} \cR_t(\circ d W_t,h_t)\dot h_s\dd s}_{\RR^n}
=\int_0^\infty\ip{\nabla^\parallel_s F,\int_0^{s} \cR_t(\circ d W_t,h_t)\dot h_s}_{\RR^n}\dd s
=D_{\nabla^{}_VV}F,
\end{align}
where we also changed the order of integration in the first line.
For the second term in \eqref{eq: error term}, we obtain
\begin{align}
\sum_i\ip{ P_{t_i}\nabla^{(i)} f,\int_0^{t_i} \cR_t(\circ d W_t,h_t)h_{t}}_{\RR^n}
=\int_0^\infty\ip{\nabla^\parallel_t F,\cR_t(\circ d W_t,h_t)h_{t}}_{\RR^n}.
\end{align}
Putting everything together, this proves the proposition.
\end{proof}

As an immediate consequence of the above we obtain:

\begin{corollary}\label{cor_hessian}
If $M$ is Ricci-flat, then the $\varphi$-Laplacian induced by the Markovian connection and the $L^2$-connection agree, i.e.
\begin{equation}
\Lap_\varphi = \Lap_\varphi^\mathcal{L}.
\end{equation}
In particular, our differential Harnack inequality on path space of Ricci-flat manifolds can be rewritten as
\begin{align}
 &\frac{\Ex\left[\Delta^\mathcal{L}_{\varphi}F\right]}{\Ex[F]}
 -\frac{\big|{\Ex\left[\nabla_\varphi F\right]}\big|^2}{\Ex[F]^2}
 +\frac{n}2\norm{\varphi}^2\geq 0.
\end{align}
\end{corollary}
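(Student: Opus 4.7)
The plan is to derive the corollary directly from Proposition \ref{prop: mark hess}, which already expresses the discrepancy between the Markovian Hessian and the $L^2$-Hessian as an explicit curvature correction. The key observation will be that for the specific vector fields used to compute the $\varphi$-Laplacian, the correction carries a hidden Ricci trace, and therefore vanishes identically in the Ricci-flat setting.

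More concretely, first I would fix an orthonormal basis $\{e_a\}_{a=1}^n$ of $T_xM \cong \mathbb{R}^n$ and apply Proposition \ref{prop: mark hess} to the vector fields $V^a := U(\varphi e_a)$, i.e., to the choice $h_t = \varphi(t) e_a$. This gives
\begin{equation}
\Hess F(V^a, V^a) - \Hess^{\mathcal{L}} F(V^a, V^a) = \int_0^\infty \ip{\nabla^\parallel_t F,\, \varphi(t)^2\, \cR_t(\circ d W_t, e_a)\, e_a}.
\end{equation}
Summing over $a$ and pulling the sum inside the integral, the definitions of $\Hess_\varphi$, $\Hess^{\mathcal{L}}_\varphi$ and their traces give
\begin{equation}
\Delta_\varphi F - \Delta_\varphi^{\mathcal{L}} F = \int_0^\infty \varphi(t)^2\, \Big\langle \nabla^\parallel_t F,\, \sum_{a=1}^n \cR_t(\circ d W_t, e_a)\, e_a \Big\rangle.
\end{equation}

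The central step is then an index computation: using the definition of $\cR_s$ together with the symmetries of the Riemann tensor, the vector $\sum_a \cR_t(x, e_a) e_a \in \mathbb{R}^n$ has $b$-th component
\begin{equation}
\sum_a \Rm_{\gamma_t}\!\big(U_t x, U_t e_a, U_t e_b, U_t e_a\big) = \Ric_{\gamma_t}(U_t x, U_t e_b),
\end{equation}
since $\{U_t e_a\}$ is an orthonormal frame at $\gamma_t$. Under the hypothesis $\Ric \equiv 0$ this vanishes identically in $b$, so the integrand above is zero (the choice of Stratonovich versus It\^o differential is moreover irrelevant in the Ricci-flat case by Lemma \ref{lemma: Stratonovich}). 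Hence $\Delta_\varphi F = \Delta_\varphi^{\mathcal{L}} F$ as functions on $P_xM$, not merely in expectation.

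The reformulated inequality is then immediate: substituting the pointwise identity $\Delta_\varphi F = \Delta_\varphi^{\mathcal{L}} F$ into the statement of Theorem \ref{thm_harn_rf_restated} replaces $\Ex[\Delta_\varphi F]$ by $\Ex[\Delta_\varphi^{\mathcal{L}} F]$ and yields the desired Harnack inequality in terms of the $L^2$-Laplacian. There is no serious obstacle here; the only point that requires care is the book-keeping of conventions in the definition of $\cR_t$ (the ordering of arguments of $\Rm$) to confirm that the trace really produces the Ricci tensor rather than some other contraction, but the symmetries of $\Rm$ make this routine.
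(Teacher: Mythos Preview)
Your proof is correct and is exactly the argument the paper intends: the corollary is stated as an immediate consequence of Proposition \ref{prop: mark hess}, and you have carried out precisely that deduction by tracing over $V^a=U(\varphi e_a)$ and observing that the curvature correction $\sum_a \cR_t(\circ dW_t,e_a)e_a$ is a Ricci contraction that vanishes on a Ricci-flat manifold.
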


\bigskip

\section{The General Case}\label{sec: general}

Note that $\Sigma_T$-measurable functions on $C([0,\infty);M)$ can be identified with functions on $C([0,T];M)$. Hence, for ease of notation from now on we will assume that all curves have time domain $[0,1]$, i.e. we will work with the path space
\begin{equation}\label{time_restri_path_sp}
P_x M = \{ \gamma : [0,1]\to M \, | \, \gamma \textrm{ continuous }, \gamma_0=x\},
\end{equation}
the Cameron-Martin norm
\begin{equation}
|| v||_{\cH}=\left(\int_0^1 |\dot{v}_t|^2\, dt\right)^{1/2},
\end{equation}
etc (it is easy to rephrase the theorems from the introduction as equivalent theorems for $t\in[0,1]$).

\subsection{A Positive Quadratic Form}

The goal of this section is to prove Theorem \ref{thm: quadratic form} (Halfway Harnack). 
In contrast to the Ricci-flat case from the previous section, we now have to take into account the Ricci-terms. To this end, we start with the following definition.

\begin{definition}[{hat-map}]\label{hat-map}
The hat-map
\begin{equation}
L^2_\textrm{ad}(P_xM;\cH)\to L^2_\textrm{ad}(P_xM;\cH), \quad v\mapsto\hat{v}
\end{equation}
is defined by
\begin{equation}\label{eq_hat_map}
\hat{v}_t(\gamma) = v_t(\gamma)+\int_0^t \Ric_s(\gamma) v_s(\gamma)\, ds,
\end{equation}
where $\Ric_s(\gamma):\mathbb{R}^n\to\mathbb{R}^n$ is given by $\ip{\Ric_s(\gamma)v,w}=\Ric_{\gamma_s}(U_s(\gamma)v,U_s(\gamma)w)$.
\end{definition}

\begin{lemma}[{c.f. \cite[Lem. 3.7.1]{fangmall}}]\label{lemma_hat_inv}
The hat-map is well-defined, linear, and bijective. Moreover, we have the bounds
\begin{equation}
\norm{\hat{v}}_{\cH}\leq (1+C(\Ric)) \norm{v}_{\cH},\quad \textrm{and}\quad \norm{v}_{\cH}\leq (1+C(\Ric)) \norm{\hat{v}}_{\cH}
\end{equation}
where $C(\Ric)\to 0$ as $|\Ric|\to 0$.
\end{lemma}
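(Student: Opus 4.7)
The plan is to treat the hat-map as a bounded invertible perturbation of the identity on $\cH$, controlled pointwise in $\gamma$ by the sup-norm $|\Ric|_\infty$. Linearity is immediate from \eqref{eq_hat_map}, and adaptedness of $\hat v$ follows because the integral only sees $v$ up to time $t$, while $\Ric_s$ is $\Sigma_s$-adapted through $U_s(\gamma)$. Differentiating \eqref{eq_hat_map} in $t$ yields $\dot{\hat v}_t=\dot v_t+\Ric_t v_t$, so for the forward inequality I would combine Minkowski's inequality in $L^2$-time with the Cauchy--Schwarz bound $|v_t|\leq t^{1/2}\norm{v}_\cH$, which uses $v_0=0$, to obtain
\[
\norm{\hat v}_\cH\,\leq\,\norm{v}_\cH+|\Ric|_\infty\Bigl(\int_0^1|v_t|^2\,dt\Bigr)^{1/2}\,\leq\,\bigl(1+\tfrac{1}{\sqrt 2}|\Ric|_\infty\bigr)\norm{v}_\cH,
\]
which gives the first inequality with $C(\Ric)=\tfrac{1}{\sqrt 2}|\Ric|_\infty$.

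For bijectivity I would read \eqref{eq_hat_map} as the linear Volterra equation $v_t=\hat v_t-\int_0^t\Ric_s v_s\,ds$, equivalently the ODE $\dot v_t=\dot{\hat v}_t-\Ric_t v_t$ with $v_0=0$. Picard iteration $v^{(k+1)}_t:=\hat v_t-\int_0^t\Ric_s v^{(k)}_s\,ds$ converges uniformly pathwise since $|\Ric|_\infty<\infty$, and each iterate is adapted, so the limit $v$ is an adapted inverse to the hat-map. For the reverse bound I would apply Gronwall to the pointwise estimate $|v_t|\leq|\hat v_t|+|\Ric|_\infty\int_0^t|v_s|\,ds$, using $|\hat v|_\infty\leq\norm{\hat v}_\cH$, to obtain $|v|_\infty\leq e^{|\Ric|_\infty}\norm{\hat v}_\cH$. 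Feeding this back into $\dot v_t=\dot{\hat v}_t-\Ric_t v_t$ and taking $\cH$-norms via Minkowski gives
\[
\norm{v}_\cH\,\leq\,\norm{\hat v}_\cH+|\Ric|_\infty e^{|\Ric|_\infty}\norm{\hat v}_\cH,
\]
i.e.\ the second inequality with $C(\Ric)=|\Ric|_\infty e^{|\Ric|_\infty}$, which vanishes as $|\Ric|\to 0$.

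There is no serious obstacle; the argument is essentially a bounded-perturbation computation, and both norm bounds are pathwise in $\gamma$. The only point to observe is that the assertion that $v\mapsto\hat v$ and its inverse preserve $L^2_{\textrm{ad}}(P_xM;\cH)$ follows at once by taking expectations, since the constants $C(\Ric)$ depend only on the supremum bound on the Ricci transform and not on $\gamma$.
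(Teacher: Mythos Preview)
Your argument is correct and follows essentially the same route as the paper: differentiate \eqref{eq_hat_map} to get $\dot{\hat v}_t=\dot v_t+\Ric_t v_t$, use $|v_t|\le t^{1/2}\norm{v}_\cH$ for the forward bound, and invert via the associated linear ODE. The only minor difference is that for the reverse inequality the paper uses an $\eps$-absorption argument on $\norm{v}_\cH^2$, whereas your Gronwall route gives an explicit constant $|\Ric|_\infty e^{|\Ric|_\infty}$ valid for all sizes of $|\Ric|$; both yield $C(\Ric)\to 0$ as $|\Ric|\to 0$.
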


\begin{proof}
Using $|\Ric|\leq K$ and $|{v_t}|\leq t^{1/2} ||v||_{\cH}\leq ||v||_{\cH}$ we can estimate
\begin{equation}
\int_0^1|\dot{\hat{v}}_t|^2\dd t=\int_0^1|\dot{v}_t+\Ric_t v_t|^2\dd t \leq (1+C(K)) ||v||_{\cH}^2,
\end{equation}
hence
\begin{equation}
\Ex \left[||\hat{v}||_{\cH}^2\right ]\leq (1+C(K)) \Ex \left[||{v}||_{\cH}^2\right ] <\infty.
\end{equation}
Together with the observation that by the defining formula \eqref{eq_hat_map} the process $\hat{v}_t$ is adapted whenever $v_t$ is adapted, this implies that the hat-map is well defined. Also, the hat-map is obviously linear.\\
Next, assume that $\hat{v}=0$. Then, from \eqref{eq_hat_map} we see that $v$ solves the ODE
\begin{equation}
\dot{v}_t+\Ric_t v_t = 0,\qquad v_0=0.
\end{equation}
Thus, $v=0$, which shows that the hat-map is injective.\\
Finally, given $w\in L^2_\textrm{ad}(P_xM;\cH)$ we solve the ODE
\begin{equation}
\dot{v}_t+\Ric_t v_t = \dot{w}_t, \qquad v_0=0.
\end{equation}
The solution is clearly adapted, and using $|\Ric|\leq K$ and $|{v_t}|\leq ||v||_{\cH}$ we can estimate
\begin{equation}
||v||_{\cH}^2=\int_0^1 \left|\dot{w}_t-\Ric_t v_t\right|^2\, dt \leq \left(1+{\eps}\right)||w||_{\cH}^2 + C(\eps,K) ||v||_{\cH}^2.
\end{equation}
Choosing $\eps$ small enough the term on the right hand side can be absorbed. Hence, $v\in L^2_\textrm{ad}(P_xM;\cH)$, which proves that the hat-map is surjective. This concludes the proof of the lemma.
\end{proof}

\begin{definition}[hat of a vector field on path space]\label{hat2}
For any vector field $V\in L^2_{\textrm{ad}}(P_xM;TP_xM)$ we write
\begin{equation}
\widehat{V}:=U\widehat{U^{-1}V}.
\end{equation}
\end{definition}

Now, as in Cruzeiro-Fang \cite{cruzeirofang} we can consider the modified Markovian connection:

\begin{definition}[{modified Markovian connection, \cite[Sec. 3]{cruzeirofang}}]\label{def:modifiedconnection}
The modified Markovian connection $\nabla^{\models}$ is defined via
\begin{equation}
\reallywidehat{\nabla^{\models}_{V}W}=\nabla_{V}\widehat{W}
\end{equation}
for $V,W\in L^2_{\textrm{ad}}(P_xM;TP_xM)$, where $\nabla$ denotes the Markovian connection from Definition \ref{def_mark_conn}.
\end{definition}

The modified Markovian connection is well-defined, since the hat-map is invertible by Lemma \ref{lemma_hat_inv}. Note that in the Ricci-flat case we have $\nabla^{\models}=\nabla^{}$, since the hat-map becomes the identity-map.

By \cite[Thm. 3.1]{cruzeirofang} the modified Markovian connection is compatible with the modified $\cH$-product
\begin{equation}
\ip{ V,W}_{\widehat{\cH}} := \ip{ \widehat{V},\widehat{W}}_{\cH}.
\end{equation}
Indeed, using that $\nabla$ is compatible with the $\cH$-product one can compute
\begin{align}
Z \ip{ V,W}_{\widehat{\cH}} &= Z  \ip{ \widehat{V},\widehat{W}}_{\cH} \\
& = \ip{ \nabla^{}_{Z} \widehat{V},\widehat{W}}_{\cH}+\ip{  \widehat{V},\nabla^{}_{Z} \widehat{W}}_{\cH}\\
& = \ip{ \nabla^{\models}_{Z} V,W}_{\widehat{\cH}}+\ip{  V,\nabla^{\models}_{Z} W}_{\widehat{\cH}}.
\end{align}

\begin{definition}[divergence]
The divergence of a vector field $V\in L^2_{\textrm{ad}}(P_xM;TP_xM)$ is defined by
\begin{align}
\delta(V):=\frac12\int_0^1\ip{\dot {\hat v}_t, \dd W_t},
\end{align}
where $v=U^{-1}V$ and $\dot{\hat{v}}_t=\dot{v}_t+\Ric_t v_t$ as in Definition \ref{hat-map}.
\end{definition}

The definition of the divergence is motivated by the integration by parts formula (see Section \ref{sec_int_by_parts}), which can be rewritten as 
\begin{align}\label{eq: ibp_rewritten}
\Ex\left[D_{V} F G \right]=\Ex\left[ -F D_{V}G+FG\delta(V)\right]
\end{align}
for $V\in L^2_{\textrm{ad}}(P_xM;TP_xM)$. The following is a very useful algebraic relation:

\begin{proposition}[{commutator formula, c.f. \cite[Thm. 3.2]{cruzeirofang}}]\label{commutation}
For $V,W\in L^2_{\textrm{ad}}(P_xM;TP_xM)$, with $W$ differentiable, we have
\begin{align}
D_{V}\delta(W)=\delta({\nabla^{\models}_{V}{W}})+\tfrac12\ip{ V, W}_{\widehat{\cH}}.
\end{align}
\end{proposition}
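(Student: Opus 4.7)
I would adapt the argument of Proposition \ref{RicFlat commutation} to the general setting, carrying along the Ricci corrections that appear when Stratonovich integrals are converted to Ito integrals. The cue is that the divergence $\delta(W)=\tfrac12\int_0^1\ip{\dot{\hat w}_t,\dd W_t}$ is designed so that these corrections get absorbed into the hat map, and that the modified Markovian connection satisfies $\reallywidehat{\nabla^{\models}_V W}=\nabla_V\hat W$ by construction, so the formula should assemble cleanly into the claimed form. Transferring to Euclidean path space via the intertwining formula \eqref{Bismut covariant derivative}, the quantity $D_V\delta(W)$ becomes the derivative along the tangent process $v^*$ of $\tfrac12\int_0^1\ip{\dot{\hat w}_t\circ I,\dd\bar W_t}$, and the Leibniz rule together with $D_{v^*}\bar W_t=v^*_t$ yield
\begin{align*}
2D_V\delta(W) = \int_0^1\ip{D_V\dot{\hat w}_t,\dd W_t} + \int_0^1\ip{\dot{\hat w}_t,\dd v^*_t}.
\end{align*}

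\textbf{Key computation.} Converting the outer Stratonovich in \eqref{Bismut covariant derivative 2} to Ito produces, exactly as in the proof of Proposition \ref{prop_int_by_parts}, the correction $\Ric_t v_t\,\dd t$, so that $\dd v^*_t = \dd\hat v_t - A_t\,\dd W_t$ with $A_t:=\int_0^t\cR_s(\circ\dd W_s,v_s)$. The $\dd\hat v_t$-piece integrates to $\ip{\hat V,\hat W}_{\cH}=\ip{V,W}_{\widehat{\cH}}$. The Riemann symmetry $\ip{\cR_s(x,y)w,z}=-\ip{\cR_s(x,y)z,w}$ makes $A_t$ an antisymmetric matrix for each $t$, so $-\ip{\dot{\hat w}_t,A_t\,\dd W_t}=\ip{A_t\dot{\hat w}_t,\dd W_t}$. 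Combining with the first integral and invoking Definition \ref{def_mark_conn} applied to the adapted vector field $\hat W$ (with the Leibniz extension used in the Ricci-flat proof), one has
\begin{align*}
D_V\dot{\hat w}_t + A_t\dot{\hat w}_t = \tfrac{\dd}{\dd t}U_t^{-1}(\nabla_V\hat W)_t = \tfrac{\dd}{\dd t}U_t^{-1}\bigl(\reallywidehat{\nabla^{\models}_V W}\bigr)_t
\end{align*}
by Definition \ref{def:modifiedconnection}. The resulting integral is exactly $2\delta(\nabla^{\models}_V W)$, and dividing by $2$ concludes.

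\textbf{Main obstacle.} The delicate point is the Stratonovich-to-Ito bookkeeping: the Ricci correction picked up in $v^*$ must match, with the correct sign, the correction built into the hat map, so that $\dot v_t+\Ric_t v_t=\dot{\hat v}_t$ is recognised on both sides of the computation and no stray Ricci terms remain. A subsidiary technicality is justifying the Leibniz-type variation of Ito integrals against a semi-martingale integrator and checking the integrability/adaptedness hypotheses of Proposition \ref{prop_int_by_parts} for the processes $\hat V$, $\hat W$, and $\widehat{\nabla^{\models}_V W}$; these are exactly the properties secured by Lemma \ref{lemma_hat_inv}, so the tangent-process/Girsanov framework underlying Proposition \ref{prop_int_by_parts} does most of the analytic work.
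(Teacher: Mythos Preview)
Your proposal is correct and follows essentially the same route as the paper: transfer to $P_0\mathbb{R}^n$ via the intertwining formula, compute $\dd v^*_t=\dot{\hat v}_t\,\dd t - A_t\,\dd W_t$ by converting the outer Stratonovich to Ito, and then recognise $D_V\dot{\hat w}_t+A_t\dot{\hat w}_t$ as the time derivative of $U_t^{-1}(\nabla_V\hat W)_t=U_t^{-1}\bigl(\reallywidehat{\nabla^{\models}_V W}\bigr)_t$. In fact you spell out more than the paper does (the antisymmetry of $A_t$ and the identification with $\nabla_V\hat W$), whereas the paper stops at the three-term expression and simply says ``this implies the assertion.''
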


\begin{proof}
The proof is similar to the one of Proposition \ref{RicFlat commutation}, with a few changes to take into account the Ricci-terms. Generalizing equation \eqref{diffeq_vstar} we now have
\begin{align}
 \dd v^*_t=&\dot v_t\dd t-\int_0^t\cR_s(\circ d W_s,v_s)\circ d W_t\\
 =&\dot v_t\dd t-\int_0^t\cR_s(\circ d W_s,v_s)\dd W_t+\Ric_tv_t\dd t\\
 =&\dot {\hat v}_t\dd t-\int_0^t\cR_s(\circ d W_s,v_s)\dd W_t.
\end{align}
Using this and the intertwining formula we compute
\begin{align}\label{eq: term general}
2D_{V}\delta(W)&= D_{V}\int_0^\infty\ip{\dot {\hat w}_t,d W_t}\\
 &=\int_0^\infty\ip{D_{v^*}(\dot{\hat w}_t\circ I), d \bar W_t}+\int_0^\infty\ip{\dot{\hat w}_t\circ I, d D_{v^*} \bar W_t}\\
&=\int_0^\infty\ip{D_V\dot{\hat w}_t, d W_t}
-\int_0^\infty\ip{\dot{\hat w}_t,\int_0^t\cR_s(\circ d W_s,v_s)\dd W_t}+\int_0^\infty\ip{\dot{\hat v}_t,\dot {\hat w}_t}\dd t.
\end{align}
This implies the assertion.
\end{proof}

We are now ready to state and prove our Halfway Harnack inequality in the general case:

\begin{theorem}[Halfway Harnack]\label{thm: quadratic form}
Let $F:P_xM\to\mathbb{R}^+$ be a nonnegative cylinder function. Then
\begin{align}
Q_F[V,V]:=\frac{\Ex\left[D_V(D_V F)\right]}{\Ex[F]}-\frac{\Ex\left[D_{V} F\right]^2}{\Ex[F]^2}+\frac{\Ex\left[F\delta({\nabla^{\models}_{V}V})\right]}{\Ex[F]}+\frac12\frac{\Ex\left[F\norm{V}^2_{\widehat{\cH}}  \right]}{\Ex[F]}
\end{align}
is nonnegative for every derivable $V\in L^2_{\textrm{ad}}(P_xM;TP_xM)$.
\end{theorem}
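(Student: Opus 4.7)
The plan is to follow the Ricci-flat proof of Theorem \ref{thmRicflat: quadratic form} essentially verbatim, using the modified Markovian connection $\nabla^\models$ and the modified divergence $\delta$ (built from the hat-map) in place of their Ricci-flat counterparts. The key point is that the algebraic identities needed, namely the integration by parts formula \eqref{eq: ibp_rewritten} and the commutator formula from Proposition \ref{commutation}, have been tailored precisely so that the Ricci error terms are absorbed into the divergence and the $\widehat{\cH}$-norm.

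Concretely, after normalizing $\Ex[F]=1$, I would first apply integration by parts once with $G\equiv 1$ to obtain $\Ex[D_V F]=\Ex[F\delta(V)]$. Next I would apply integration by parts twice, first to the outer $D_V$ and then to the resulting factor $\delta(V)$, to compute
\begin{align}
\Ex[D_V(D_V F)] &= \Ex[D_V F\cdot \delta(V)] \notag\\
&= \Ex[F\delta(V)^2] - \Ex[F\, D_V\delta(V)].
\end{align}
Now the commutator formula (Proposition \ref{commutation}) gives $D_V\delta(V)=\delta(\nabla^\models_V V)+\tfrac12 \norm{V}_{\widehat{\cH}}^2$, so
\begin{align}
\Ex[D_V(D_V F)] = \Ex[F\delta(V)^2] - \Ex[F\delta(\nabla^\models_V V)] - \tfrac12 \Ex[F\norm{V}_{\widehat{\cH}}^2].
\end{align}

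Substituting into $Q_F[V,V]$, the two terms $\Ex[F\delta(\nabla^\models_V V)]$ and the two $\tfrac12\Ex[F\norm{V}_{\widehat{\cH}}^2]$ terms cancel exactly, leaving
\begin{align}
Q_F[V,V] = \Ex[F\delta(V)^2] - \Ex[F\delta(V)]^2,
\end{align}
which is nonnegative by the Cauchy--Schwarz inequality applied to the probability measure $F\, d\PP_x$ (recall $F\geq 0$ and $\Ex[F]=1$). Finally one extracts the general case from $\Ex[F]=1$ by scaling.

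The main obstacle is not in this final computation, which mirrors the Ricci-flat argument line-for-line, but rather in ensuring the hypotheses of Proposition \ref{prop_int_by_parts} and Proposition \ref{commutation} are met for the adapted processes under consideration. In particular, one must verify that $\nabla^\models_V V$ corresponds to an adapted $\RR^n$-valued process with $\Ex[\norm{\cdot}_{\widehat{\cH}}^2]<\infty$ so that divergence is well-defined and integration by parts applies to it; this follows from the derivability hypothesis on $V$ together with Lemma \ref{lemma_hat_inv} which controls $\widehat{\cdot}$ in $L^2$. Once this bookkeeping is done, the proof reduces to the clean algebraic cancellation described above, with the bound following immediately from Cauchy--Schwarz.
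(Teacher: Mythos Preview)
Your proposal is correct and follows essentially the same approach as the paper: normalize $\Ex[F]=1$, apply integration by parts twice to rewrite $\Ex[D_V(D_V F)]$, invoke Proposition \ref{commutation} to replace $D_V\delta(V)$ by $\delta(\nabla^{\models}_V V)+\tfrac12\norm{V}^2_{\widehat{\cH}}$, and observe that the resulting expression for $Q_F[V,V]$ collapses to $\Ex[F\delta(V)^2]-\Ex[F\delta(V)]^2\geq 0$ by Cauchy--Schwarz. The only cosmetic difference is that the paper also records the identity $\Ex[D_{\nabla^{\models}_V V}F]=\Ex[F\delta(\nabla^{\models}_V V)]$, which is not actually needed since the statement of $Q_F$ already involves $\Ex[F\delta(\nabla^{\models}_V V)]$ directly.
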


\begin{proof}
By scaling we can assume that
\begin{equation}
\Ex[F]=1.
\end{equation}
Using the integration by parts formula \eqref{eq: ibp_rewritten} we get
\begin{align}
{\Ex\left[D_{V}F\right]}=\Ex\left[F\delta(V)\right],
\end{align}
and
\begin{align}
\Ex\left[D_{\nabla^{\models}_{V}V}F\right]=\Ex\left[F \delta(\nabla^{\models}_{V}V)\right].
\end{align}
Next, applying the integration by parts formula \eqref{eq: ibp_rewritten} twice and using also Proposition \ref{commutation} (commutator formula) we compute
\begin{align}
\Ex\left[D_{V}(D_{V}F)\right]=&\Ex\left[D_{V}F\delta({V})\right]\\
=&\Ex\left[ F\delta({V})^2\right]-\Ex\left[F D_{V}\delta({V})\right]\\
=&\Ex\left[ F\delta({V})^2\right]-\Ex\left[F \delta(\nabla^{\models}_{V}{V})\right] -\tfrac12\Ex\left[F\norm{V}_{\widehat{\cH}}^2\right].
\end{align}

Combining the above formulas, we conclude that
\begin{equation}
Q_F[V,V]=\Ex\left[ F\delta({V})^2\right]-\Ex\left[ F\delta({V})\right]^2,
\end{equation}
which is indeed nonnegative by the Cauchy-Schwarz inequality.
\end{proof}

\subsection{Differential Matrix Harnack}\label{sec_diff_mat_harn_gen}

In this section, we prove the differential Matrix Harnack inequality (Theorem \ref{thmintro: general}) on the path space of general Riemannian manifolds.

\begin{proof}[{Proof of Theorem \ref{thmintro: general}}]
We will show the claim for cylinder functions and appeal to density. So
let $F=f(\gamma_{t_1},\cdots,\gamma_{t_k})$ be a cylinder function.
By scaling we can assume that $F$ is $\Sigma_1$-measurable, and that
\begin{equation}\label{eq_normal}
\Ex[F]=1,\quad \textrm{and}\quad \norm{\varphi}=1.
\end{equation}
Fix any unit vector $e_a\in T_xM$. We choose
\begin{equation}\label{eq_normal2}
v_t=\varphi(t) e_a,
\end{equation}
and apply Theorem \ref{thm: quadratic form} (Halfway Harnack) for $V=Uv$, which gives
\begin{align}
{\Ex\left[D_V(D_V F)\right]}-{\Ex\left[D_{V} F\right]^2}+{\Ex\left[F\delta(\nabla^{\models}_{V}V)\right]}+\tfrac12 {\Ex\left[F\norm{V}^2_{\widehat{\cH}}  \right]}\geq 0.
\end{align}
Using the definition of the Markovian Hessian
we rewrite this as
\begin{multline}
\Ex\left[ \Hess^{} F(V,V)\right]  - \Ex\left[D_{V} F \right]^2 + \tfrac12 \Ex \left[ F\norm{V}^2_{\cH} \right] \\
+ \tfrac{1}{2}\Ex\left[F \left( \norm{V}^2_{\widehat{\cH}}-\norm{V}^2_{\cH}\right)\right]
+\Ex\left[D_{\nabla^{}_VV}  F \right]
+\Ex\left[D_{\nabla^{\models}_VV} F  \right]\geq 0.
\end{multline}
We view the terms in the second line as error terms, which we have to bound from above.

First, using Lemma \ref{lemma_hat_inv} and equations \eqref{eq_normal} and \eqref{eq_normal2} we can estimate
\begin{equation}
\tfrac{1}{2}\Ex\left[F \left( \norm{V}^2_{\widehat{\cH}}-\norm{V}^2_{\cH}\right)\right]\leq C(\Ric).
\end{equation}
Next, using also the integration by parts formula, Cauchy-Schwarz inequality, the Ito isometry, and Lemma \ref{lemma_hat_inv}, we have
\begin{equation}
\Ex\left[D_{\nabla^{}_VV}  F \right]^2=\Ex\left[ F  \delta(\nabla^{}_VV )  \right]^2 \leq  (1+C(\Ric))  \Ex[F^2]\,\Ex\left[ \norm{\nabla^{}_{V}V}_{\cH}^2  \right].
\end{equation}
Similarly, using the definition of the modified Markovian connection (Definition \ref{def:modifiedconnection}), we can estimate
\begin{equation}
\Ex\left[D_{\nabla^{\models}_VV} F  \right]^2=\Ex\left[ F \delta(\nabla^{\models}_VV )  \right]^2 \leq (1+C(\Ric)) \Ex[F^2]\, \Ex\left[ \norm{\nabla^{}_{V}\widehat{V}}_{\cH}^2  \right]  .
\end{equation}

To finish the proof of the theorem, it thus remains to prove the following claim:

\begin{claim}
We have the estimates
\begin{equation}\label{claim_eq1}
\Ex\left[ \norm{\nabla^{}_{V}V}_{\cH}^2  \right]\leq C(\Rm,\nabla\Ric),
\end{equation}
and
\begin{equation}\label{claim_eq2}
\Ex\left[ \norm{\nabla^{}_{V}\widehat{V}}_{\cH}^2  \right]\leq C(\Rm,\nabla\Ric),
\end{equation}
where $C(\Rm,\nabla\Ric)<\infty$ is a constant which tends to zero as $|\Rm|+|\nabla \Ric|\to 0$.
\end{claim}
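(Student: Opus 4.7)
The plan is to expand both covariant derivatives using the defining formula of the Markovian connection (Definition \ref{def_mark_conn}, extended by the Leibniz rule to nonconstant vector fields), and then to control the resulting Stratonovich integrals by converting them to Ito form. The only nontrivial point is keeping track of which curvature quantities appear when making the conversion.

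For \eqref{claim_eq1}, since $v_s=\varphi(s)e_a$ is deterministic, \eqref{defRicFlat:connection} gives
\begin{equation}
\frac{d}{dt}U_t^{-1}(\nabla_V V)_t=\dot\varphi(t)\int_0^t\cR_s(\circ d W_s,\varphi(s)e_a)\,e_a,
\end{equation}
so that
\begin{equation}
\Ex\!\left[\norm{\nabla_V V}_\cH^2\right]=\int_0^1\abs{\dot\varphi(t)}^2\,\Ex\!\left[\,\bigg|\int_0^t\cR_s(\circ d W_s,\varphi(s)e_a)\,e_a\bigg|^2\,\right] dt.
\end{equation}
I would rewrite the inner Stratonovich integral as an Ito integral plus a quadratic-variation correction, exactly as in the proof of Lemma \ref{lemma: Stratonovich}. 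The Ito part is controlled by Ito's isometry together with $\abs{\cR}\leq\abs{\Rm}$ and $\abs{\varphi(s)}^2\leq s\leq 1$, producing a bound of the form $C\abs{\Rm}^2$. The correction term has integrand proportional to $\sum_A H_A R_{A\alpha c\alpha}^s\varphi(s)$, which by the contracted second Bianchi identity equals $\bigl(H_c R_{\alpha\alpha}-H_\alpha R_{\alpha c}\bigr)\varphi(s)$ and is therefore pointwise bounded by $\abs{\nabla\Ric}$. Combining and using $\norm{\varphi}^2=1$ yields \eqref{claim_eq1}.

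For \eqref{claim_eq2}, writing $\hat v_t=\varphi(t)e_a+\int_0^t\Ric_s\varphi(s)e_a\,ds$, the Leibniz-rule extension of the Markovian connection gives
\begin{equation}
\frac{d}{dt}U_t^{-1}(\nabla_V\hat V)_t=D_V\dot{\hat v}_t+\dot{\hat v}_t\int_0^t\cR_s(\circ d W_s,\varphi(s)e_a).
\end{equation}
The second summand is treated exactly as in the $\nabla_V V$ case, with $\dot\varphi(t)$ replaced by $\dot{\hat v}_t=\dot\varphi(t)e_a+\varphi(t)\Ric_t e_a$; Lemma \ref{lemma_hat_inv} gives $\int_0^1\abs{\dot{\hat v}_t}^2\,dt\leq 1+C(\Ric)$, so its contribution is again of order $C(\Rm,\nabla\Ric)$. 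The first summand is genuinely new: $v_s$ does not depend on $\gamma$, but $\Ric_s(\gamma)$ does, so $D_V\dot{\hat v}_t=\varphi(t)(D_V\Ric_t)e_a$, and $D_V\Ric_t$ is pointwise controlled by $\abs{\nabla\Ric}\abs{V_t}$ plus lower-order $\abs{\Rm}\abs{\Ric}$-type contributions coming from differentiating the horizontal lift $U_t$ along $V$. Its contribution to the $\cH$-norm squared is therefore $\lesssim C(\Rm,\nabla\Ric)\int_0^1\abs{\varphi(t)}^2\,dt\leq C(\Rm,\nabla\Ric)$.

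The main obstacle is the Stratonovich-to-Ito conversion used in bounding the stochastic integrals: a priori the correction would involve the full covariant derivative $\nabla\Rm$, but the contracted Bianchi identity argument of Lemma \ref{lemma: Stratonovich} collapses it to $\nabla\Ric$. This is precisely why the error constants in Theorem \ref{theorem_harn_gen} can be written in terms of $\abs{\Rm}$ and $\abs{\nabla\Ric}$ alone, and why in the Einstein case of Corollary \ref{cor_matr_harn} only the curvature constant $\Lambda$ appears.
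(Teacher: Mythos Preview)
Your proposal is correct and follows essentially the same route as the paper: expand both $\nabla_V V$ and $\nabla_V\widehat V$ via the defining formula of the Markovian connection, convert the Stratonovich integral $\int_0^t\cR_s(\circ dW_s,v_s)$ to Ito form (the correction term collapsing to $\nabla\Ric$ by the contracted second Bianchi identity), bound the Ito part by Ito's isometry, and for the $\widehat V$ case compute $D_V\dot{\hat v}_t$ by differentiating $\Ric_t$ along $V$---the paper packages this last computation as Lemma~\ref{lemma: derivative}, which gives exactly the $(\nabla\Ric)_t(v_t,v_t)$ term plus the $\Ric\cdot\int\cR\,dW$ cross terms you describe as ``lower-order $|\Rm||\Ric|$-type contributions.'' One small point: when handling the second summand in the $\widehat V$ case, invoking Lemma~\ref{lemma_hat_inv} to bound $\int_0^1|\dot{\hat v}_t|^2\,dt$ is slightly imprecise since $\dot{\hat v}_t$ is random and the stochastic integral must be bounded in expectation for each fixed $t$; the cleaner way (as the paper does) is to use the pointwise bound $|\dot{\hat v}_t|^2\leq 2|\dot\varphi(t)|^2+2|\Ric|^2$ before taking the expectation.
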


\begin{proof}[Proof of the claim]
By definition of the Markovian connection and our choice of $V$ we have
\begin{align}
\Ex\left[ \norm{\nabla^{}_{V}V}_{\cH}^2  \right]=\Ex\left[ \int_0^1 \left| \int_0^t \mathcal{R}_s(\circ dW_s,v_s)\dot{v}_t \right|^2 \, dt\right]\leq \sup_{t\in[0,1]} \Ex\left[\left|\int_0^t  \mathcal{R}_s(\circ dW_s,v_s) \right|^2 \, \right].
\end{align}
Using Ito's lemma and the Bianchi identity we see that
\begin{equation}
\mathcal{R}_s(\circ d W_s,v_s)=\mathcal{R}_s(\dd W_s,v_s)+(\nabla\Ric)_s \wedge v_s \dd s,
\end{equation}
where $\wedge$ is a certain bilinear pairing whose precise structure is irrelevant for our purpose. Hence, using also the bound $|v_s|\leq 1$, and Ito's isometry, we can estimate
\begin{equation}
\begin{aligned}\label{eq: Ito}
\Ex\left[\left|\int_0^t  \mathcal{R}_s(\circ dW_s,v_s) \right|^2 \, \right]
&\leq 2 \Ex\left[ \left| \int_0^t\mathcal{R}_s( dW_s,v_s) \right|^2 \, \right]+2 \Ex\left[\left(  \int_0^t  |(\nabla\Ric)_s \wedge v_s| \, ds \right)^2 \, \right]\\
&\leq C(\Rm,\nabla \Ric),
\end{aligned}
\end{equation}
which proves the estimate \eqref{claim_eq1}.

Concerning estimate \eqref{claim_eq2},
by the definition of the Markovian connection  we have
\begin{align}
\Ex\left[ \norm{\nabla^{}_{V}\widehat V}_{\cH}^2  \right]=&\Ex\left[ \int_0^1 \left| D_V\dot{\hat v}_t+\int_0^t \mathcal{R}_s(\circ d W_s,v_s)\dot{\hat v}_t \right|^2 \dd t\right]\\
\leq  &2\Ex\left[\int_0^1 \abs{D_V\dot{\hat v}_t}^2 \dd t \right]+2\Ex\left[\int_0^1\left|\int_0^t  \mathcal{R}_s(\circ d W_s,v_s) \right|^2\abs{\dot{\hat v}_t}^2 \dd t \right]\,.
\end{align}
Using Lemma \ref{lemma: derivative} below, and $|v_t|\leq 1$, we can estimate
\begin{equation}
\abs{D_V\dot{\hat v}_t}^2\leq 2 |(\nabla\Ric)_t|^2+8 |\Ric_t|^2\left|\int_0^t\cR_s(\circ d W_s,v_s)\right|^2.
\end{equation}
This yields
\begin{equation}
\Ex\left[\int_0^1 \abs{D_V\dot{\hat v}_t}^2 \dd t \right]\leq C(\Rm,\nabla\Ric).
\end{equation}
Finally, using $\abs{\dot{\hat v}_t}^2\leq 2\abs{\dot v_t}^2+2\abs{\Ric}^2$ and arguing
similarly as above we can estimate
\begin{align}
\Ex\left[\int_0^1\left|\int_0^t  \mathcal{R}_s(\circ d W_s,v_s) \right|^2 \abs{\dot{\hat v}_t}^2\dd t \right]
\leq C(\Rm, \nabla\Ric).
\end{align}
This proves the claim.
\end{proof}

Putting things together we conclude that
\begin{equation}
\Ex\left[ \Hess^{} F(V,V)\right]  - \Ex\left[D_{V} F \right]^2 +\tfrac12 \Ex \left[ F\norm{V}^2_{\cH} \right]+C(\Ric)
+C(\Rm,\nabla\Ric) \Ex [F^2 ]^{1/2}\geq 0.
\end{equation}
Together with the definition of the $\varphi$-Hessian and $\varphi$-Laplacian, and our choice of $V$, this finishes the proof of Theorem \ref{thmintro: general}.
\end{proof}

It remains to prove the following lemma, which has been used in the above proof:

\begin{lemma}\label{lemma: derivative}
If $v\in\cH$ and $V=Uv$, then
 \begin{align}
  D_{V}\dot{\hat v}_t=  (\nabla\Ric)_t(v_t,v_t)+\Ric_{t}\left(\int_0^t\cR_s(\circ d W_s,v_s)v_t\right)-\int_0^t\cR_s(\circ d W_s,v_s)\Ric_tv_t.
 \end{align}
\end{lemma}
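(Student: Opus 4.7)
The key observation is that $\dot{\hat v}_t = \dot v_t + \Ric_t v_t$ by Definition~\ref{hat-map}, and since $v\in\cH$ is a deterministic Cameron--Martin path, both $D_V \dot v_t = 0$ and $D_V v_t = 0$. Therefore by the Leibniz rule the lemma reduces to the purely algebraic identity
\begin{equation}
D_V \dot{\hat v}_t = (D_V \Ric_t)\, v_t,
\end{equation}
and it suffices to compute $D_V\Ric_t$.

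Writing $\Ric_t = U_t^{-1}\,\Ric_{\gamma_t}\,U_t$ as a map $\RR^n\to\RR^n$ and applying Leibniz splits $D_V\Ric_t$ into three pieces. The piece involving $D_V\Ric_{\gamma_t}$ is purely geometric: since $D_V\gamma_t = V_t = U_t v_t$, it equals $(\nabla_{V_t}\Ric)_{\gamma_t}$, which in frame coordinates is $(\nabla\Ric)_t(v_t,\cdot)$; evaluating on $v_t$ produces the term $(\nabla\Ric)_t(v_t,v_t)$ in the lemma. The remaining two pieces require the variation of the stochastic frame itself, namely the identity
\begin{equation}\label{eq_frame_variation}
D_V U_t = U_t\,Q_t, \qquad Q_t := \int_0^t \cR_s(\circ dW_s, v_s) \in \mathfrak{so}(n),
\end{equation}
together with its consequence $D_V U_t^{-1} = -Q_t\,U_t^{-1}$ obtained by differentiating $U_t U_t^{-1} = \mathrm{id}$. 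Identity \eqref{eq_frame_variation} follows by differentiating the horizontal-lift SDE $d\bar U_t = H_a(\bar U_t)\circ dW_t^a$ along the tangent process $v^*$ associated to $V$ via the intertwining formula \eqref{Bismut covariant derivative}--\eqref{Bismut covariant derivative 2}: the antisymmetric Brownian coefficient $-\int_0^\cdot\cR_s(\circ dW_s,v_s)$ appearing in $dv^*_t$ rotates the driving Brownian motion and, after the horizontal lift, induces exactly the rotation $Q_t$ of the frame. Indeed, the smooth-path prototype of this formula is already computed in the proof of Proposition~\ref{prop: mark hess}, where $\nabla_{V_{t}}(u_{t} e_a) = u_{t}\int_0^{t}\cR_s(\dot\beta_s,h_s)e_a\,ds$; the stochastic version results from the transfer principle. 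Substituting \eqref{eq_frame_variation} into the three-term Leibniz expansion and evaluating on $v_t$ produces the three summands
\begin{equation}
(D_V\Ric_t)v_t = (\nabla\Ric)_t(v_t,v_t) + \Ric_t(Q_t v_t) - Q_t(\Ric_t v_t),
\end{equation}
matching the lemma exactly.

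The main obstacle is the rigorous justification of \eqref{eq_frame_variation}: it requires careful Stratonovich bookkeeping when differentiating the horizontal-lift SDE under a tangent-process perturbation, and vigilance about signs when passing between the infinitesimal rotation of Brownian motion on $P_0\RR^n$ and the corresponding rotation of the frame on $P_xM$. Once this identity is in hand, the remainder of the proof is only product-rule algebra and requires no further stochastic analysis.
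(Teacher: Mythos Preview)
Your proposal is correct and follows essentially the same route as the paper: reduce to $D_V\dot{\hat v}_t = (D_V\Ric_t)v_t$, expand $\Ric_t = U_t^{-1}\Ric_{\gamma_t}U_t$ via Leibniz, and use the frame-variation identity $D_V U_t = U_t\int_0^t\cR_s(\circ dW_s,v_s)$ established for smooth paths in Proposition~\ref{prop: mark hess} together with the transfer principle. The paper carries out the same computation slightly more concretely by pairing with a basis vector $e_a$ and working directly with $\nabla_{V_t}(u_te_a)$ on the manifold side, but the content is identical.
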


\begin{remark}
Note that in the Einstein case $D_{V}\dot{\hat v}_t=0$, as expected.
\end{remark}
 \begin{proof}
 By the definition of $\hat v$ we have
 \begin{align}
  D_V\dot{\hat v}_t=D_V\Ric_t v_t.
 \end{align}

Let us assume that $\gamma_t$ is a smooth path in $M$, and let $\gamma_t^\eps$ be the variation with $\gamma_0^\eps=0$ and $\frac{d}{d\eps}\Big|_{\eps=0}\gamma_t^\eps=V_t$. Let $u_t^\eps$ be the horizontal lift of $\gamma_t^\eps$. Let $\beta$ be the anti-development in $\RR^n$. Later we will appeal to the transfer principle.

Let $e_a$ be a basis vector in $\mathbb R^n$. Then
\begin{align}
\ip{D_V\Ric_t(v_t),e_a}_{\mathbb R^n}=&\frac{d}{d\eps}\Big|_{\eps=0}\ip{\Ric_{\gamma_t^\eps}(u_t^\eps v_t),u_t^\eps e_a}_{T_{\gamma_t^\eps}M}\\
=&\ip{\nabla_{V_t}\left(\Ric_{\gamma_t}(V_t)\right),u_t e_a}_{T_{\gamma_t}M}
+\ip{\Ric_{\gamma_t}(V_t),\nabla_{V_t}(u_te_a)}_{T_{\gamma_t}M}.
\end{align}
From the proof of Proposition \ref{prop: mark hess} we already know that
\begin{align}
\nabla_{V_t}(u_te_a)=u_t\int_0^t\cR^s(\dot \beta_s,v_s)\dd s\, e_a\, .
\end{align}
Using also the Leibniz rule we obtain
\begin{align}
\nabla_{V_t}\left(\Ric_{\gamma_t}(V_t)\right)&=(\nabla_{V_t}\Ric_{\gamma_t})(V_t)+\Ric_{\gamma_t}(\nabla_{V_t}V_t)\\
&=(\nabla_{V_t}\Ric_{\gamma_t})(V_t)+\Ric_{\gamma_t}\left( u_t\int_0^t\cR^s(\dot \beta_s,v_s)\dd s\, v_t\right) \, .
\end{align}
Putting things together, this yields
\begin{align}
D_V\Ric_tv_t=(\nabla\Ric)_t(v_t,v_t)+\Ric_{t}\left(\int_0^t\cR^s(\dot\beta_s,v_s)\dd s\, v_t\right)-\int_0^t\cR^s(\dot\beta_s,v_s)\dd s\Ric_tv_t\, .
\end{align}
By the transfer principle, this implies the assertion.
\end{proof}

\subsection{Differential Harnack}

In this final section, we prove the differential Harnack inequality on path space of general manifolds (Theorem \ref{theorem_harn_gen}) and its corollary (Corollary \ref{cor_matr_harn}).

We note that taking the trace of the differential Matrix Harnack inequality (Theorem \ref{thmintro: general}) one immediately obtains
 \begin{align}\label{eq_almost_harnack}
\frac{\Ex[\Delta^{}_{\varphi} F]}{\Ex[F]}
-\frac{\abs{\Ex[\nabla_\varphi F]}^2}{\Ex[F]^2}
+\left( \frac{n}{2}+C(\Ric)
+C(\Rm,\nabla\Ric)\frac{\Ex[F^2]^{1/2} }{\Ex[F]}  \right) 
\norm{\varphi}^2\geq 0,
\end{align}
however, only with the information that $C(\Rm,\nabla\Ric)\to 0$ as $|\Rm|+|\nabla \Ric|\to 0$.

To get the sharper estimate from Theorem \ref{theorem_harn_gen}, where  $C(\Rm,\nabla \Ric)$ tends to zero as $|\Rc|+|\nabla \Ric|\to 0$ assuming only that $|\Rm|$ stays bounded, we will argue in the opposite order. Namely, will first take the trace, and then derive sharper estimates for the error terms of the trace Harnack.

\begin{proof}[Proof of Theorem \ref{theorem_harn_gen}]
By scaling we can assume that $F$ is $\Sigma_1$-measurable, and that
\begin{equation}\label{eq_normaliz}
\Ex[F]=1,\quad \textrm{and}\quad \norm{\varphi}=1.
\end{equation}
Arguing similarly as in the proof of Theorem \ref{thmintro: general} and taking the trace over $V_a=U(\varphi e_a)$, where $e_a\in T_xM$ is an orthonormal basis, we obtain
\begin{multline}
\Ex\left[ \Delta^{}_{\varphi} F \right]  - \left|\Ex\left[\nabla_\varphi F \right]\right|^2 +\frac{n}{2} \Ex \left[ F\right] \\
+C(\Ric)+\left(1+C(\Ric)\right)\left(      \Ex\left[ \norm{\sum_{a}\nabla^{}_{V_a}V_a}_{\cH}^2  \right]^{1/2}  +\Ex\left[ \norm{\sum_{a}\nabla^{}_{V_a}\widehat{V_a}}_{\cH}^2  \right]^{1/2}       \right) \Ex[ F^2 ]^{1/2}\geq 0.
\end{multline}

To finish the proof of the theorem, it thus remains to prove the following claim:

\begin{claim}
We have the estimates
\begin{equation}\label{claim_eq1tr}
\Ex\left[ \norm{\sum_{a}\nabla^{}_{V_a}V_a}_{\cH}^2  \right]\leq C_1(\Ric,\nabla R),
\end{equation}
and
\begin{equation}\label{claim_eq2tr}
\Ex\left[ \norm{\sum_{a}\nabla^{}_{V_a}\widehat{V_a}}_{\cH}^2  \right]\leq C_2(\Rm,\nabla \Ric),
\end{equation}
where $C_1(\Ric,\nabla R)$ tends to zero as $|\Rc|+|\nabla R|\to 0$, and $C_2(\Rm,\nabla \Ric)$ tends to zero as $|\Rc|+|\nabla \Ric|\to 0$ assuming only that $|\Rm|$ stays bounded.
\end{claim}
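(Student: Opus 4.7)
The plan is to unfold the traced quantities using the definition of the Markovian connection, and then exploit the sum over the orthonormal basis $\{e_a\}$ of $T_xM$ to convert full curvature tensor expressions into Ricci-type contractions via the symmetries of $\Rm$ and the contracted second Bianchi identity. This tracing is precisely why the traced estimates can be sharpened compared to the naive matrix-level bounds appearing in the proof of Theorem~\ref{thmintro: general}.

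For \eqref{claim_eq1tr}, since $v_{a,s}=\varphi_s e_a$ is deterministic, Definition~\ref{def_mark_conn} gives
\begin{equation*}
\tfrac{d}{dt}U_t^{-1}(\nabla^{}_{V_a}V_a)_t = \dot\varphi_t \Bigl(\int_0^t \varphi_s\,\cR_s(\circ dW_s,e_a)\Bigr)e_a.
\end{equation*}
The symmetries of $\Rm$ yield the pointwise identity $\sum_a \cR_s(x,e_a)e_a = \Ric_s\,x$ for every $x\in\mathbb{R}^n$, so working component-wise
\begin{equation*}
\sum_a \tfrac{d}{dt}U_t^{-1}(\nabla^{}_{V_a}V_a)_t = \dot\varphi_t \int_0^t \varphi_s\,\Ric_s\circ dW_s.
\end{equation*}
Converting the Stratonovich integral to Ito form as in the proof of Lemma~\ref{lemma: Stratonovich} produces a quadratic covariation correction, which by Ito's formula on the frame bundle followed by the contracted second Bianchi identity $2\,\mathrm{div}\,\Ric=\nabla R$ equals $\tfrac12\int_0^t\varphi_s\,\nabla R\,ds$. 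Applying the Ito isometry and the bound $|\varphi_s|\leq s^{1/2}\norm{\varphi}$ then yields
\begin{equation*}
\Ex\Big[\norm{\sum_a\nabla^{}_{V_a}V_a}_\cH^2\Big] \leq C\bigl(\sup|\Ric|^2 + \sup|\nabla R|^2\bigr),
\end{equation*}
which is the required estimate $C_1(\Ric,\nabla R)\to 0$ as $|\Ric|+|\nabla R|\to 0$.

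For \eqref{claim_eq2tr}, note that $\dot{\hat v}_{a,t}=\dot\varphi_t e_a+\varphi_t\Ric_t e_a$, so Definition~\ref{def_mark_conn} together with Lemma~\ref{lemma: derivative} expresses $\sum_a\tfrac{d}{dt}U_t^{-1}(\nabla^{}_{V_a}\widehat{V_a})_t$ as a sum of three basic types of terms:\ (i) the contribution $\varphi_t^2\sum_a(\nabla\Ric)_t(e_a,e_a)=\varphi_t^2(\nabla R)_t$, controlled by $|\nabla R|$;\ (ii) terms in which $\Ric_t$ factors outside the sum over $a$, so that after tracing exactly as in part (1) they reduce to $\Ric_t$ applied to the integrand of part (1), and are bounded by $|\Ric|(|\Ric|+|\nabla R|)$;\ and (iii) terms of the form $\sum_a \bigl(\int_0^t\varphi_s\, \cR_s(\circ dW_s,e_a)\bigr)\Ric_t e_a$, whose trace produces a contraction of $\Rm$ against $\Ric_t$ bounded in operator norm by $|\Rm|\cdot|\Ric|$, with Stratonovich-to-Ito correction controlled, via the second Bianchi identity, by $|\Rm|\cdot|\nabla\Ric|+|\Ric|\cdot|\nabla\Ric|$. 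Every such term carries at least one factor of $|\Ric|$ or $|\nabla\Ric|$ with at worst a single factor of $|\Rm|$; combining with the Ito isometry and $\norm{\varphi}=1$ yields \eqref{claim_eq2tr} with $C_2(\Rm,\nabla\Ric)\to 0$ as $|\Ric|+|\nabla\Ric|\to 0$, provided $|\Rm|$ stays bounded.

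The main obstacle is the index bookkeeping in parts (ii)--(iii): one must verify that in each of the several contraction patterns appearing in $\sum_a\nabla^{}_{V_a}\widehat{V_a}$, the Stratonovich-to-Ito conversion of $\cR_s(\circ dW_s,\cdot)$ produces only $|\nabla\Ric|$-type (never bare $|\nabla\Rm|$) corrections after the trace over $a$ is taken. In each pattern this is delivered by the appropriate application of the contracted second Bianchi identity, but the argument must be repeated for each term separately.
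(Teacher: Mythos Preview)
Your argument is correct and follows essentially the same route as the paper: both unfold the Markovian connection, use the trace identity $\sum_a\cR_s(x,e_a)e_a=\Ric_s\,x$ together with the (contracted) second Bianchi identity to turn the Stratonovich corrections into $\nabla\Ric$- and $\nabla R$-type terms, and conclude via the It\^o isometry. The only cosmetic difference is in \eqref{claim_eq2tr}: the paper bounds the contribution $\sum_a D_{V_a}\dot{\hat v}^a_t$ by simply invoking the matrix-level estimate from the proof of \eqref{claim_eq2} (which controls $\int_0^t\cR_s(\circ dW_s,v_s)$ using full $|\Rm|$), whereas you trace this piece term-by-term through Lemma~\ref{lemma: derivative}; both yield the required smallness as $|\Ric|+|\nabla\Ric|\to 0$ with $|\Rm|$ bounded.
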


\begin{proof}[Proof of the claim]
Using the definition of the Markovian connection and our choice of $V_a$ we have
\begin{align}
\Ex\left[ \norm{\sum_{a}\nabla^{}_{V_a}V_a}_{\cH}^2  \right]&=\Ex\left[ \int_0^1 \left| \int_0^t \sum_a\mathcal{R}_s(\circ dW_s,\varphi_se_a)\dot{\varphi}_te_a \right|^2 \, dt\right]\\
&\leq \sup_{t\in[0,1]} \Ex\left[ \left| \int_0^t \varphi_s\Ric_s \circ dW_s \right|^2 \, \right].
\end{align}
Using Ito's lemma and the contracted Bianchi identity we see that
\begin{equation}
\Ric_s \circ dW_s=\Ric_s dW_s+\tfrac{1}{2}(\nabla R)_s\, ds.
\end{equation}
Hence, using also the bound $|v_s|\leq 1$, and Ito's isometry, we can estimate
\begin{align}
\Ex\left[ \left| \int_0^t \varphi_s\Ric_s \circ dW_s \right|^2 \, \right]\leq C(\Ric,\nabla R),
\end{align}
which proves the estimate \eqref{claim_eq1tr}.

Concerning estimate \eqref{claim_eq2tr},
by the definition of the Markovian connection  we have
\begin{align}
\Ex\left[ \norm{\sum_a\nabla^{}_{V_a}\widehat V_a}_{\cH}^2  \right]=&\Ex\left[ \int_0^1\left|  \sum_a D_{V_a}\dot{\hat v}^a_t+\int_0^t \sum_a \mathcal{R}_s(\circ d W_s,v^a_s)\dot{\hat v}^a_t \right|^2 \dd t\right].
\end{align}
Now, similarly as in the proof of \eqref{claim_eq2} we can estimate
\begin{equation}
\Ex\left[ \int_0^1\left|   D_{V_a}\dot{\hat v}^a_t\right|^2\right]\leq C(\Rm,\nabla\Ric),
\end{equation}
where $C(\Rm,\nabla \Ric)$ tends to zero as $|\Rc|+|\nabla \Ric|\to 0$ assuming only that $|\Rm|$ stays bounded.
Moreover, since $\dot{\hat{v}}_t^a=\dot{v}_t^a+\Rc_t v_t^a$ and $v_t^a=\varphi_t e_a$ we have
\begin{equation}
\sum_a \mathcal{R}_s(\circ d W_s,v^a_s)\dot{\hat v}^a_t=\int_0^t\varphi_s\Rc_s \circ d W_s\,\dot{\varphi}_t+ \int_0^t \mathcal{R}_s(\circ d W_s,v_s^a) \Ric_t v_t^a
\end{equation}
From this, the assertion follows.
\end{proof}

Using the claim, and putting things together we concluded that
 \begin{equation}
{\Ex[\Delta^{}_{\varphi} F]}
-{\abs{\Ex[\nabla_\varphi F]}^2}
+ \frac{n}{2}+C(\Ric)+C(\Rm,\Rc,\nabla\Ric){\Ex\left[ F^2\right]^{1/2} } \geq 0,
\end{equation}
where $C(\Rm,\nabla \Ric)$ tends to zero as $|\Rc|+|\nabla \Ric|\to 0$ assuming only that $|\Rm|$ stays bounded. This finishes the  proof of Theorem \ref{theorem_harn_gen}.
\end{proof}

\begin{proof}[Proof of Corollary \ref{cor_matr_harn}]
Inspecting the above proof we see that in the Einstein case the error estimates in the claim above only depend on the Einstein constant. This proves the corollary.
\end{proof}

\bibliography{HKN_DiffHarnack}

\bibliographystyle{alpha}

\vspace{10mm}
{\sc Robert Haslhofer, Department of Mathematics, University of Toronto, 40 St George Street, Toronto, ON M5S 2E4, Canada}\\

{\sc Eva Kopfer, Institut f\"ur angewandte Mathematik, Universit\"at Bonn, Endenicher Allee 60, 53115 Bonn, Germany}\\

{\sc Aaron Naber, Department of Mathematics, Northwestern University, 2033 Sheridan Road, Evanston, IL 60208, USA}\\

\emph{E-mail:} roberth@math.toronto.edu, eva.kopfer@iam.uni-bonn.de, anaber@math.northwestern.edu
\end{document}